\newtheorem{theorem}{Theorem}[section]
\newtheorem{lemma}[theorem]{Lemma}
\newtheorem{prop}[theorem] {Proposition}
\newtheorem{cor}[theorem]  {Corollary}
\newtheorem{definition}[theorem] {Definition}
\theoremstyle{definition}
\theoremstyle{remark}
\newtheorem*{remark}{Remark}
\newcommand{\e}{\mathrm{e}} 
\newcommand{\N}{\mathbb{N}}
\newcommand{\R}{\mathbb{R}}
\newcommand{\C}{\mathbb{C}}
\newcommand{\E}{\mathbb{E}}
\newcommand{\dd}{\mathrm{d}} 
\newcommand{\eps}{\varepsilon}
\newcommand{\la}{\langle}
\newcommand{\ra}{\rangle}
\newcommand{\be}{\begin{equation}}
\newcommand{\ee}{\end{equation}}
\newcommand{\vect}[1]{\boldsymbol{#1}}
\newcommand{\set}[1]{{\left \{ #1 \right \}}}
\newcommand{\abs}[1]{{\left | #1 \right |}}
\newcommand{\bra}[1]{\left( #1 \right) }
\newcommand{\norm}[1]{\left\lVert #1 \right\rVert}
\newcommand{\one}{\mathbbm{1}}
\newcommand{\1}{\mathbbm{1}}
\newcommand{\dx}{\:\mathrm{d}}
\begin{document}

\title[Representations of the $su(1,1)$ current algebra] {Representations of the $su(1,1)$ current algebra and probabilistic perspectives}
\date{12 February 2024}

\author[S. Floreani]{Simone Floreani}
\address{Universit{\"a}t Bonn,
Institut für Angewandte Mathematik,
Abt. Stochastische Systeme mit Wechselwirkung,
Endenicher Allee 60, 53115 Bonn, Germany}
\email{sflorean@uni-bonn.de} 

\author[S. Jansen]{Sabine Jansen}
\address{Mathematisches Institut, Ludwig-Maximilians-Universit{\"a}t, 80333 M{\"u}nchen; Munich Center for Quantum Science and Technology (MCQST), Schellingstr. 4, 80799 M{\"u}nchen, Germany.}
\email{jansen@math.lmu.de}

\author[S. Wagner]{Stefan Wagner}
\address{Mathematisches Institut, Ludwig-Maximilians-Universit{\"a}t, 80333 M{\"u}nchen; Munich Center for Quantum Science and Technology (MCQST), Schellingstr. 4, 80799 M{\"u}nchen, Germany.}
\email{swagner@math.lmu.de}

\maketitle
	
\begin{abstract} 
	We construct three representations of the $su(1,1)$ current algebra: in extended Fock space, with Gamma random measures, and with  negative binomial (Pascal) point processes. For the second and third representations, the lowering and neutral operators are generators of  measure-valued branching processes (Dawson-Watanabe superprocesses) and spatial birth-death processes. The vacuum is the constant function $1$ and iterated application of raising operators yields Laguerre and Meixner polynomials.
In addition, we prove a Baker-Campbell-Hausdorff formula and give an explicit formula for the action of unitaries $\exp( k^+(\xi) - k^-(\xi))\exp(2 \mathrm i k^0(\theta))$ on exponential vectors. We explain how the representations fit in with a general scheme proposed by Araki and with representations of the $SL(2,\R)$ current group with Vershik, Gelfand and Graev's multiplicative measure.

	\medskip

    \noindent \emph{MSC2020:}  
81R10; 
33C45; 
60K35; 
60H40 

\medskip
    
    \noindent \emph{Keywords}: 
current algebras; infinite-dimensional Lie algebras; orthogonal polynomials; Gamma random measure; negative binomial (Pascal) point processes; Markov processes
\end{abstract}


\section{Introduction}

The generators for many interacting particle systems can be written with creation or annihilation operators, or raising and lowering operators. This holds true not only for bosons, fermions, or spin chains in quantum mechanics, but also for classical systems that evolve according to some continuous-time Markov process; this helps uncover useful hidden symmetries \cite{schutz1994non}. Sometimes the  generators of two different Markov semigroup ($P_t = \exp(tL)$, $t\geq0$) have the \emph{same} formal  expression with raising and lowering operators; the difference comes from different representations of the raising and lowering operators and their commutation relations. These observations lead to the algebraic approach to duality \cite{carinci-giardina-redig-book,giardina-kurchan-redig-vafayi2009,jansen-kurt2014,sturm2020algebraic}. 

A concrete example is the following model for energy transport, see Giardin{\`a}, Kurchan, Redig and Vafayi \cite{giardina-kurchan-redig-vafayi2009} and references therein. It is closely related to a model by Kipnis, Marchioro and Presutti \cite{kipnis-marchioro-presutti1982}. The simplest version of the \emph{Brownian momentum process} on a finite 1D lattice $\{1,2,\ldots, \ell \}$ is a continuous-time Markov process with state space $\R^\ell$ and formal generator 
\[
	L = \sum_{i,j} \Bigl(p_i \frac{\partial}{\partial p_j} - p_j \frac{\partial}{\partial p_i}\Bigr)^2.
\]
The sum is over nearest-neighbor pairs $\{i,j\}$, say with periodic boundary conditions. (More general versions allow for open systems interacting with reservoirs at the boundaries and for vector-valued momenta.) The interpretation is that at each site $i$, sits a particle with momentum $p_i$; two neighboring sites may exchange momentum but the total kinetic energy $\frac12 \sum_i p_i^2$ is conserved. The \emph{symmetric inclusion process} instead describes particles hopping on the 1D lattice. The state space is $\N_0^\ell$ and the formal generator is
\[
	\bigl(\tilde L f\bigr)(\vect n) = \sum_{i,j} n_i \bigl( n_j + \frac{1}{2}\bigr) \Bigl( f(n_1,\ldots, n_i - 1,\ldots, n_j +1,\ldots, n_\ell) - f(n_1,\ldots, n_\ell)\Bigr). 
\] 
Both $L$ and $\tilde L$ can be written as
\begin{equation} \label{eq:su11chain}
	\sum_{i,j} \Bigl( k_i^+ k_j^- + k_j^+ k_i^- - 2 k_i^0 k_j^0 + \frac 1 8\Bigr)
\end{equation}
with raising, lowering and neutral operators $k_i^\pm$, $k_j^0$ that satisfy the $su(1,1)$ commutation relations, 
\[
	[k_i^-,k_i^+] = 2 k_i^0,\quad [k_i^0, k_i^\pm] = \pm k_i^\pm.
\]
Operators associated with different lattice sites commute. 
For the Brownian energy process, 
\[
	k_i^+ = \frac12 p_i^2,\quad  k_i^- = \frac12  \frac{\partial^2}{\partial p_i^2},\quad  k_i^0 f = \frac14 \Bigl( \frac{\partial}{\partial p_i} p_i f+ p_i \frac{\partial}{\partial p_i} f\Bigr)
\] 
For the symmetric inclusion process,
\begin{align*}
	k_i^+ f(\vect n) & = n_i f(n_1, \ldots, n_{i-1}, n_i -1,n_{i+1}, \ldots, n_\ell),\\
	k_i^- f(\vect n) & = \Bigl( \frac 12 + n_i\Bigr) f(n_1, \ldots, n_{i-1}, n_i+1,n_{i+1}, \ldots, n_\ell),\\
	k_i^0 f(\vect n) & = \Bigl( n_i + \frac{1}{4} \Bigr) f(\vect n). 
\end{align*}
Notice that the lowering operator $k_i^-$ annihilates the indicator $\1_{\{\vect n =0\}}$ that there is no particle at all. 
The common algebraic formula~\eqref{eq:su11chain} helps explain dualities between the two processes \cite{giardina-kurchan-redig-vafayi2009}. In turn, duality helps investigate transport phenomena \cite{giardina-kurchan-redig2007,kipnis-marchioro-presutti1982}. 

How can we extend the algebraic formalism to systems on $\R^d$?  Infinite-di\-men\-sional Lie algebras and current algebras provide the proper algebraic framework; concretely, we look for representations of the $su(1,1)$ current algebra. Those keywords may suggest a daunting background from quantum field theory, algebra or geometry \cite{fuchs-book,kac-book}, but the definition we shall adopt (Definition~\ref{def:carep}), loosely following Araki \cite{araki1969} and Accardi and Boukas \cite{AccardiItoCalculus} is a simple variant of the familiar canonical commutation relations for bosons. In a previous article \cite{floreani-jansen-wagner2023algebraic}, we have already introduced a family of operators that form a representation of the $su(1,1)$ current algebra, and we have showed how to deduce intertwining relations for consistent Markov processes. Here we continue the analysis and place a stronger emphasis on the algebraic structure.

We give three representations of the $su(1,1)$ current algebra. The first representation is similar to the  representation in Fock space of the canonical commutation relations from quantum mechanics (Theorem~\ref{thm:fock-rep}). The other two representations are closer to Gaussian models for the Euclidean free fields, in which Hermite polynomials play a distinguished role \cite{glimm-jaffe}. The Hilbert spaces are $L^2$-spaces with respect to some probability measure, the vacuum is the constant function $1$, and iterated application of raising operators to the vacuum gives rise to orthogonal polynomials. Gaussian fields are replaced with Gamma random measures and negative binomial point processes (also called Pascal point processes). Hermite polynomials are replaced with Laguerre and Meixner polynomials (Theorem~\ref{thm:rep-laguerre} and~\ref{thm:rep-meixner}). 
%

In addition, we prove an infinite-dimensional version of the Baker-Campbell-Hausdorff formula (Theorem~\ref{thm:bch}) and we make explicit the action of a familiy of unitaries, analogous to the Weyl operators $\exp( \mathrm i (c(f) + c^\dagger (f))$ from quantum field theory, on exponential vectors (Theorem~\ref{thm:group}). The theorem should yield a representation of the current group for the universal cover  of $SU(1,1)$, but we refrain from making this explicit.

To the best of our knowledge, the three representations are new, however the ingredients that we use are not and there are closely related results in the literature. The first representation is similar to Boukas's representation of Feinsilver's finite-difference algebra \cite{boukas1988thesis} and {\'S}niady's representation of commutation relations for quadratic white noise \cite{sniady2000}. All these algebras are related:  Accardi and Skeide \cite{accardi-skeide2000} linked the representations of the finite-difference algebra and the renormalized square of white noise, which in turn is related to  to representations of the current algebra of $sl (2,\R)$ \cite{accardi2002renormalized}; finally, as is well known, the groups $SL(2,\R)$ and $SU(1,1)$ are isomorphic and so are their Lie algebras.

The extended Fock space used in our first representation also  appears in chaos decompositions in non-Gaussian white noise analysis, see Kondratiev, da Silva, Streit, and Us \cite{kondratiev-silva-streit-us1998}, Berezansky and Mierzejewski \cite{berezansky-mierzejewski2000}, Kondratiev and Lytvynov \cite{kondratiev-lytvynov2000gamma}. Gamma random measures and negative binomial (Pascal) point processes, together with raising, lowering, and neutral operators and infinite-dimensional Meixner and Laguerre polynomials have been intensely analyzed, see Lytvynov \cite{Lytvynov2003} and references therein. Lytvynov also made the connection with the algebra of the square of the white noise \cite{lytvynov2004square-of-white-noise}. For an algebraic stance on Pascal white noise, see Barhoumi, Ouerdiane and Riahi \cite{barhoumi-ouerdiane-riahi2008}. 

A slightly different series of articles goes back to Gelfand, Graev and Vershik \cite{gelfand-graev-vershik1971}. 
Araki \cite{araki1969} proposed a general scheme for finding factorizable representations of current groups in Fock spaces---but did not implement it for $SU(1,1)$ or $SL(2,\R)$. Vershik, Gelfand and Graev however did investigate those groups \cite{vershik-gelfand-graev1973}. For a full account and further references we refer the reader to Graev and Vershik \cite{vershik-graev2009}. We only mention this:  A key role is played by a kind of generalized infinite-dimensional Lebesgue measure, defined as a specific $\sigma$-finite measure that is absolutely continuous with respect to the law of a Gamma random measure. Yet another representation  involves instead ``quasi-Poisson spaces'' \cite{vershik-graev2011}. These representations are different from ours, but in Appendices~\ref{app:araki} and~\ref{app:lebesgue} we explain how our representations relate to Araki's scheme and to the infinite-dimensional Lebesgue measure.

All of the above-mentioned references have a strongly analytic flavor to them. This makes them hard to access for readers 
who may not wish to learn about distribution theory, nuclear spaces, rigged Hilbert spaces, or white noise analysis. Therefore we have strived to make the setup as simple as possible. We keep topological assumptions to a minimum and we allow for non-smooth test functions and reference measures with atoms. 

Another distinctive feature of our presentation is the emphasis on probabilistic interpretations of operators. It turns out that the lowering and neutral operators in our second and third representations generate continuous-time Markov processes that are of considerable intrinsic interest: measure-valued branching processes (Dawson-Watanabe superprocesses)  and spatial birth-death processes. This observation, to the best of our knowledge, is new, not only for the current algebra but also for the Lie algebra where the associated processes are diffusions (Laguerre and Bessel) and linear birth-death processes. 

The article is organized as follows. First we recall some basics about the group $SU(1,1)$, define the kind of representations that we shall study, and provide some context for our results (Section~\ref{sec:prelim}). Then we state our main results on representations of the $su(1,1)$ current algebra (Section~\ref{sec:results}). The simpler representations of the $su(1,1)$ Lie algebra are given in Section~\ref{sec:univariate}.  Finally we turn to the proofs of our main results (Section~\ref{sec:proofs}). In Appendices~\ref{app:araki} and~\ref{app:lebesgue} we explain how our approach fits into Araki's scheme \cite{araki1969} and to the representation of the $SL(2,\R)$ current group with generalized Lebesgue measure sketched in  Tsilevich, Vershik and Yor \cite{tsilevich-vershik-yor2001}.

\section{Preliminaries} \label{sec:prelim}

Here we gather some relevant background, most importantly,  we define the representations of $su(1,1)$ current algebras that we investigate in this article (Definition~\ref{def:carep}). In addition, we provide some background that aids the understanding of our main results: 
basics on the group  $SU(1,1)$; representations of the canonical commutation relations with Hermite and Charlier polynomials; how Gamma and and negative binomial (Pascal) laws naturally appear; and why we may expect relations with continuous-time Markov processes.

\subsection{The group \texorpdfstring{$SU(1,1)$}{SU(1,1)}}  \label{sec:22matrices}
The group $SU(1,1)$ consists of the complex $2\times 2$ matrices 
\begin{equation}\label{eq:gab}
	A= \begin{pmatrix}
			a & b \\
			\overline{b} & \overline{a}
	\end{pmatrix} 
\end{equation}
with $|a|^2 - |b|^2 = 1$. Every such matrix can be written as 
\begin{equation}\label{eq:arep}
   A(\xi,\theta) = \exp\bigl( \xi k^+ - \overline \xi k^-)\exp (2\mathrm i \theta k^0)
\end{equation} 
where $\xi \in \C$, $\theta \in \R$, and 
\[
	k^+ = \begin{pmatrix} 0 & \mathrm i \\ 0 & 0 \end{pmatrix},\quad 
	k^- = \begin{pmatrix} 0 & 0 \\ \mathrm i & 0 \end{pmatrix},\quad 
	k^0 = \begin{pmatrix} 1/2 & 0 \\ 0 & -1/2 \end{pmatrix}.
\] 
The matrices satisfy the commutation relations 
\begin{equation}\label{eq:su11algebra}
	[k^-,k^+] = 2 k^0,\quad [k^0,k^\pm] = \pm k^\pm.
\end{equation}
The $su(1,1)$ algebra consists of the complex linear combinations of the matrices $k^0,k^+,k^-$. The exponential in~\eqref{eq:arep} is easily computed, one has 
\begin{equation} \label{eq:axitheta}
	A(\xi,\theta) = \begin{pmatrix} 
			\cosh |\xi| & \mathrm i \frac{\xi}{|\xi|} \sinh |\xi| \\
			- \mathrm i  \frac{\overline{\xi}}{|\xi|} \sinh |\xi| & \cosh |\xi| 
	\end{pmatrix} 
	\begin{pmatrix}
		\e^{\mathrm i \theta} & 0 \\
		0 & \e^{-\mathrm i \theta}
	\end{pmatrix}.
\end{equation}
Bargmann \cite{bargmann1947} investigated representations of $SU(1,1)$ in the context of the Lorentz and Poincar{\'e} groups from special relativity. The $su(1,1)$ algebra also appears in quantum optics for pairs of photons: If $a$ and $a^\dagger$ are bosonic annihilation and creation operators, then $k^-= \frac12 a^2$, $k^+ =\frac12 (a^\dagger)^2$ and $k^0 = \frac12 a^\dagger a + \frac 14$ satisfy the $su(1,1)$ algebra \cite{brif-vourdas-mann1996,chiribella-dariano-perinotti2006}.  The operator $\exp(\frac12 (\xi a^2 - \overline{\xi} (a^\dagger)^2 ))$ is the \emph{squeeze operator} \cite[Chapter 2.7]{scully-zubairy-book}.

\subsection{Current algebra} As is customary with bosonic creation and annihilation operators, we work with operators indexed by functions. Let $(\mathbb X,\mathcal X)$ be a measurable space and $\alpha$ be a $\sigma$-finite reference measure on $\mathbb X$; for example, $\R^d$ with the Borel $\sigma$-algebra and the Lebesgue measure. 
We introduce an algebra of test functions
\begin{equation} \label{eq:c-algebra}
	\mathcal C = \bigl \{\varphi:\mathbb X\to \C:\, \text{ bounded, measurable, }\alpha(\{x:\varphi(x) \neq 0\})<\infty \bigr\}.
\end{equation}
The following definition is modeled after a similar definition in Accardi and Boukas \cite{accardi2009quantum} for the algebra of the renormalized square of white noise. 
Araki \cite{araki1969} defines the $su(1,1)$ current algebra over $\mathbb X$ as the set of $su(1,1)$-valued, bounded measurable functions, equipped with the pointwise Lie algebra operations.  Our function-indexed operators $k^\pm (\varphi), k^0(\theta)$ represent the matrix-valued maps $x\mapsto \varphi(x) k^+$, $x\mapsto \overline{\varphi(x)} k^-$, $x\mapsto \theta(x) k^0$ with $k^\pm, k^0$ the $2\times 2$-matrices from Section~\ref{sec:22matrices}. 

\begin{definition} \label{def:carep}
A family  $k^\pm(\varphi), k^0(\varphi)$, $\varphi \in \mathcal C$ of operators in a Hilbert space $\mathcal H$ is a \emph{Fock representation of the $su(1,1)$ current algebra} with \emph{vacuum} $\Psi$  if:
\begin{enumerate}[(i)]
	\item The operators are defined on a common domain $\mathcal D$ that is dense in $\mathcal H$ and they map $\mathcal D$ to itself, i.e., $k^\#(\varphi) \mathcal D \subset \mathcal D$. 
	\item $k^+(\varphi)$ and $k^0(\varphi)$ are linear in $\varphi$ while $k^-(\varphi)$ is antilinear in $\varphi$.
	\item For all $\varphi,\theta\in\mathcal C$,
	\begin{equation}
   		\begin{gathered}   \label{eq:commutation-relations-k}
    [k^-(\varphi), k^+(\theta)] = 2 k^0(\overline{\varphi} \theta)\\
     [k^0(\varphi), k^+(\theta)] =  k^+(\varphi \theta),
     \quad  [k^0(\varphi), k^-(\theta)] = - k^-(\overline{\varphi} \theta)
	    \end{gathered}
	  \end{equation}
		and $[k^\#(\varphi),k^\#(\theta)]=0$ for $\# \in \{0,+,-\}$.
	\item For all $f,g \in \mathcal D$ and $\varphi \in \mathcal{C}$, 
	    \begin{align} \label{eq:adjoint-relations-k}
		    \la f,  k^0(\varphi) g\ra = \la k^0(\overline{\varphi})f, g\ra, \quad \la f, k^+(\varphi)g\ra = \la k^-(\varphi) f,g \ra.
		 \end{align}
	\item $\Psi \in \mathcal{D}$, $||\Psi|| =1$ and $k^-(\varphi) \Psi =0$ for all $\varphi \in \mathcal C$.
	\item $\Psi$ is cyclic: the linear combinations of $\Psi$ and  $k^+(\varphi_1)\cdots k^+(\varphi_n)\Psi$ where $n\in \N$ and $\varphi_1,\ldots, \varphi_n \in \mathcal C$, are dense in $\mathfrak F$.  
\end{enumerate} 
\end{definition}

All our representations are \emph{factorizable}---when test functions have disjoint support, the associated operators commute. In addition, our three representations from Section~\ref{sec:results} satisfy
\[
	k^0(\varphi)\Psi =\Bigl( \frac1 2 \int \varphi \dd \alpha \Bigr) \Psi
\]
for all $\varphi \in \mathcal C$. Thus, the measure $\frac12 \alpha$ plays a role analogous to the Bargmann index for the discrete series, minimal weight representations of $SU(1,1)$ (see Section~\ref{sec:univariate}). 

We remark that the three representations are unitarily equivalent. More precisely, the isomorphism from \cite[Equation~(4.9)]{kondratiev-silva-streit-us1998} associated with the chaos decomposition of functionals of a Gamma random measure switches between the representation in extended Fock spaces and the representation outlined in Section \ref{sec:rep-gamma}. Similarly, the isomorphism from \cite[Corollary 5.3]{Lytvynov2003}, which relates to the chaos decomposition of functionals associated with a Pascal point process, switches between the representation in extended Fock spaces and the representation introduced in Section \ref{sec:rep-pascal}.

\subsection{Canonical commutation relations} 
The raising and lowering operators $k^+(f)$, $k^-(f)$ are similar to bosonic creation and annihilation operators $c^\dagger(f)$, $c(g)$ with the canonical commutation relations 
$[c(f), c^\dagger(g)] = \la f,g\ra$. The best known representation, in quantum many-body theory, is in the Fock space of sequences of symmetric functions $(f_n)_{n\in \N_0}$. For better comparison with our results, we recall two additional representations. To keep things concrete in this section $\mathbb X$ is $\R^d$ equipped with its Borel $\sigma$-algebra and with the Lebesgue measure $\lambda$. The scalar product in $L^2(\mathbb X, \lambda)$ is $\langle f, g\rangle = \int \overline f\, g \, \dd \lambda$. 

\subsubsection{Gaussian white noise, Hermite polynomials}
\label{sec:gauss-hermite}
The setting of the first additional representation is similar to that of the Euclidean quantum free field, we refer to Glimm and Jaffe \cite{glimm-jaffe} for background and proofs. By the Bochner-Minlos theorem, there is a unique probability measure $\mu$ on $\mathcal S'_\R$, the space of real-valued tempered distributions on $\R^d$, with 
\[
	\int \e^{\mathrm i \varphi(f)} \mu(\dd \varphi) = \exp\Bigl( -\frac12 \int f(x)^2 \dd x\Bigr).
\] 
for all real-valued Schwartz functions $f$. For all $n\in \N$ and real-valued Schwartz functions $f_1,\ldots, f_n$, the vector $(\varphi(f_1),\ldots,\varphi(f_n))$  is a Gaussian random vector with covariance matrix $C = (\int f_ig_j \dd \lambda)_{i,j=1,\ldots,n}$.
In particular 
\begin{itemize}
	\item If $f_1,\ldots,f_n$ have disjoint support then $\varphi(f_1)$,\ldots, $\varphi(f_n)$ are independent.
	\item Each variable $\varphi(f)$ has a normal distribution with mean zero and variance $\int f^2 \dd \lambda$. 
\end{itemize} 
The Hilbert space is $L^2(\mathcal S'_\R(\R^d),\mu)$, the space of complex-valued square-integrable functions. Let $\mathcal D$ be the set of functions $F:\mathcal S'_\R\to \C$ of the form 
\[
	F(\varphi) = P_n\bigl(\varphi(f_1),\ldots, \varphi(f_n)\bigr)
\] 
with $n\in \N$, $f_1,\ldots, f_n$ Schwartz functions, and $P_n:\C^n\to \C$ a multivariate polynomial. The annihilation operators are
\[
	c(f) F(\varphi) = \int \overline{f(x)} \frac{\delta}{\delta\varphi(x)} F(\varphi) \dd x
	=  \sum_{j=1}^n \la f, f_j\ra \frac{\partial P_n}{\partial x_j}\bigl(\varphi(f_1),\ldots,\varphi(f_n)\bigr)
\] 
and the creation operators are
\begin{align*}
	c^\dagger (g) F(\varphi) & = \varphi(g) F(\varphi) - \int g(x) \frac{\delta}{\delta \varphi(x)} F(\varphi) \dd x\\
	& = \varphi(g) F(\varphi) - \sum_{j=1}^n \la f_j, g\ra \frac{\partial P_n}{\partial x_j}\bigl(\varphi(f_1),\ldots,\varphi(f_n)\bigr).
\end{align*}
The vacuum is the constant function $\1$. Iterated application of creation operators to the vacuum gives rise to Hermite polynomials \cite{glimm-jaffe}. 

\subsubsection{Poisson point process. Charlier polynomials} \label{sec:poisson-charlier}
The second additional representation comes from Poisson white noise analysis (Kondratiev, da Silva, Streit, Us \cite{kondratiev-silva-streit-us1998} and references therein). It is similar to the previous representation except that the Gaussian measure $\mu$ is replaced by the law of a Poisson point process, supported on a simpler space of point configurations---there is no need for tempered distributions.

Let $\mathbf N$ be the set of finite or countable sums of Dirac measures, equipped with the $\sigma$-algebra generated by the counting variables $\eta\mapsto \eta(B)$, $B\in \mathcal X$. The law $\pi_\lambda$ of the \emph{Poisson point process with intensity measure $\lambda$} is the uniquely defined probability measure on $\mathbf N$ under which
\begin{itemize}
	\item Each counting variable $\eta\mapsto \eta(B)$, $B\in \mathcal X$, has a Poisson distribution with parameter $\lambda(B)$
	\[
		\pi_\lambda\bigl( \{\eta:\eta(B) = n\}\bigr) = \e^{-\lambda(B)} \frac{\lambda(B)^n}{n!}.
	\] 
	\item For all disjoint $B_1,\ldots, B_n\in \mathcal X$, the associated counting variables are independent. 
\end{itemize} 
Mathematical physicists should think of $\pi_\lambda$ as the grand-canonical Gibbs measure for a classical ideal gas in which the expected number of particles in a region $B$ is $\lambda(B)$. When $\lambda$ is Lebesgue measure, the ideal gas is homogeneous with density $1$. 

We work in the Hilbert space $L^2(\mathbf N,\pi_\lambda)$. 
Abbreviate $\eta(f) = \int f\dd \eta$ and let $\mathcal D$ be the space of polynomials in the variables $\eta(f)= \int f\dd \eta$. The annihilation and creation operators are \cite{kondratiev-silva-streit-us1998}
\begin{align*}
	c(f) F(\eta) & = \int \overline{f(x)} \bigl( F(\eta+ \delta_x) - F(\eta)\bigr) \lambda(\dd x),\\
	c^\dagger(f) F(\eta) & = \int f(x) F(\eta-\delta_x) \eta(\dd x) -  F(\eta) \int f\dd \lambda.  
\end{align*}
The vacuum $\Psi$ is again the constant function $1$, and iterated application of creation operators for indicators $f$ gives rise to \emph{Charlier polynomials} in the occupation numbers $\eta(B)$, $B\subset \mathbb X$. For example, for a finite-volume set  $B\subset \R^d$, 
\[
	\bigl(c^\dagger (\1_B) \bigr)^n \mathbf 1 (\eta) =  \mathscr C_n\bigl( \eta(B);\lambda(B) \bigr)
\] 
where $\mathscr C_n(x;\beta) = x^n+\text{lower order terms}$ is a univariate Charlier polynomial, orthogonal with respect to the measure $\nu(\{x\}) = \e^{-\beta} \beta^x/x!$ on $\N_0$ \cite{HypergeometricOrthogonalPolynomials}. 

\subsection{From Gauss and Poisson to Gamma and Pascal laws} \label{sec:quantum-proba}
Our second and third representations of the $su(1,1)$ current algebra rely on Gamma and Pascal laws. Here we briefly motivate how these distributions arise, building on results from quantum stochastic calculus.
Let $a^\dagger$ and $a$ be the creation and annihilation operators for the harmonic oscillator in some Hilbert space $\mathcal H$ with vacuum $\psi_0$, for example, $a = \frac{\dd}{\dd x}$ and $a^\dagger = x-\frac{\dd}{\dd x}$ in $L^2(\R,\exp(- x^2/2)\dd x)$, see the next subsection. 
Write $\la f(A) \rangle$ for the vacuum expectation values $\langle \psi_0,f(A)\psi_0\rangle$ of polynomial functions $f$ of an operator $A$.
The following is known: 
\begin{enumerate}[(i)]
	\item  The vacuum expectations of $a+a^\dagger$ are Gaussian: 
		\[
			\langle f(a+a^\dagger)\rangle = \frac{1}{\sqrt {2\pi}} \int_{-\infty}^\infty f(z) \exp( - z^2/2)\dd z
		\]
		for every polynomial $f:\R\to \R$ (just think of the harmonic oscillator).
	\item $a^\dagger + a + c a^\dagger a$, where $c\in \R\setminus \{0\}$, is associated with a Poisson law of  parameter $\lambda = 1/c^2$: 
	\[ 
		\langle  f(a+a^\dagger + c a^\dagger a) \rangle = \e^{-\lambda}\sum_{n=0}^\infty \frac{\lambda^n}{n!} f\bigl( c^{-1}(n-\lambda)\bigr). 
	\] 
	See Hudson and Parthasarathy \cite{hudson-parthasarathy1984} and Meyer \cite[Chapter IV.2(5)] {meyer2006quantum}. A related result is given by Ito and Kubo \cite[Proposition 5.2]{ito-kubo1988poisson-gauss}. 
 	\item $\frac12(a+a^\dagger)^2$ goes with a Gamma distribution: 
	\[
		\langle f\Bigl(\frac12 (a+a^\dagger)^2\Bigr) \rangle = \frac{1}{\sqrt \pi}\int_0^\infty f(x) x^{-1/2} \e^{-x} \dd x.
	\] 
	This follows from (i)---think $x = z^2/2$.
	\item $a^2 + (a^\dagger)^2+ \beta a^\dagger a$ with $\beta>2$ goes with negative binomial (Pascal) distribution with parameters $\alpha = 1/2$ and $p\in (0,1)$ the unique solution of $\sqrt p + {\sqrt p}^{-1} = \beta$: there exist non-zero reals $c_1,c_2$ such that
	\[
		\langle f\bigl(a^2 + (a^\dagger)^2+ \beta a^\dagger a \bigr) \rangle = (1-p)^{\alpha} \sum_{n=0}^\infty \frac{(\alpha)_n}{n!}\, p^n f\bigl( c_1 n + c_2 \bigr)
	\]
	where $(\alpha)_0 =1$, $(\alpha)_n= \alpha(\alpha+1)\cdots(\alpha +n-1)$. See Accardi, Franz and Skeide \cite{accardi2002renormalized} and the survey by Accardi and Boukas \cite[Section 17]{AccardiItoCalculus}. A similar result holds for $\beta < - 2$.
	\item $a^2+(a^\dagger)^2 + \beta a^\dagger a $ with $\beta \in (-2,2)$ is associated with a probability measure on $\R$, the orthogonality measure for the  Meixner-Pollaczek polynomials \cite{accardi2002renormalized}.
\end{enumerate} 
Remembering that $k^+ = \frac12 (a^\dagger)^2$, $k^- = \frac12 a^2$, $k^0 = \frac12 a^\dagger a + \frac14$ satisfy the commutation relations of the $su(1,1)$ algebra, we see that the operators 
\begin{equation} \label{eq:field-ops}
	X=k^+ + k^- + 2 k^0,\qquad Y=k^+ + k^- + ({\sqrt p}^{-1}+ \sqrt p) k^0
\end{equation}
with $p\in (0,1)$, 
are naturally associated with Gamma and negative binomial (Pascal) laws. 
Our three representations give priority to different operators: roughly, we diagonalize $k^0$, $X$ or $Y$. 

Combinations of the type~\eqref{eq:field-ops} are also motivated by the theory of orthogonal polynomials and Jacobi matrices, see Berezansky \cite{berezansky1998jacobi-fields}, Lytvynov \cite{lytvynov2003JFA} and references therein.
Consider for example the Laguerre polynomials $p_n(x) = \mathscr L_n^{(\alpha-1)}(x)$, orthogonal with respect to the measure $x^{\alpha-1} \exp(-x)\dd x$ on $\R_+$ and with leading coefficient $1$. They satisfy the three-term recurrence relation \cite[Eq.~(9.12.4)]{HypergeometricOrthogonalPolynomials}
\[
	x p_{n}(x) = p_{n+1}(x) + (2n+\alpha)p_n(x) + n(n+\alpha-1) p_{n-1}(x).
\]
Every polynomial $f(x)$ is a linear combination $f(x) = \sum_{n=0}^\infty \frac{1}{n!} g(n) p_n(x)$,  with at most finitely many non-zero  $g(n)$'s.  When multiplied with $x$, it is
\[
	x f(x) = \sum_{n=0}^\infty \frac1{n!} p_n(x)  \Bigl( n g(n-1) + (2n+\alpha) g(n) + (\alpha+ n) g(n+1)\Bigr).
\]
Thus multiplication by $x$ corresponds to a linear transformation of the sequence of coefficients, given by an infinite band-diagonal matrix, the Jacobi matrix. Let us define 
$J^+ g (n) = n g(n-1)$, $J^0 g(n) = (n+\alpha/2) g(n)$, $J^- g(n) = (\alpha+ n) g(n+1)$, then multiplication by $x$ corresponds to the action of $J^+ + 2 J^0 + J^-$ in the space of sequences. 

For infinite-dimensional orthogonal polynomials, the linear operators $J^\pm$ and $J^0$ are replaced by families of operators $J^\pm(\varphi)$ and $J^0(\varphi)$, indexed by test functions $\varphi$. The focus in this context is on the commutative family of operators $J^+(\varphi) + \lambda J^0(\varphi) + J^-(\varphi)$ (\emph{Jacobi fields}) and not so much on the full set of commutation relations (see, however, \cite{lytvynov2004square-of-white-noise}).  

\subsection{Continuous-time Markov processes} 
Finally we sketch how continuous-time Markov processes naturally enter the scene,  using the Gaussian model for the harmonic oscillator \cite[Chapter III.2(2)]{meyer2006quantum}.  Let $\mu=\mathcal N(0,1)$ be the standard normal distribution on $\R$. The norm in $L^2(\R,\mu)$ is 
\[
	||f||^2 = \frac{1}{\sqrt{2\pi}}\int_{-\infty}^\infty |f(x)|^2 \e^{- x^2/2} \dd x. 
\]
The densely defined operators $a  = \frac{\dd}{\dd x}$, $a^\dagger = x - \frac{\dd }{\dd x}$ satisfy $[a,a^\dagger] = \mathrm{id}$. Now, the point we wish to highlight is this: Both the annihilation operator $a$ and $L = - \widehat n$, with $\widehat n$ the number operator 
\[ 
	\widehat n = a^\dagger a= - \Bigl( \frac{\dd ^2}{\dd x^2} - x \frac{\dd }{\dd x}\Bigr) 
\]	
are generators of continuous-time Markov processes. The operator $L = - \widehat n$ generates an Ornstein-Uhlenbeck process for which the Gaussian measure $\mu$ is reversible. 
The annihilation operator generates deterministic motion to the right, $X_t = X_0 +t$. 	The creation operator corresponds to time-evolution of densities. Indeed $\exp(t a^\dagger)$ maps $f_0$ to $f_t(x) = f_0(x-t) \exp (t x)$. Thus, if  $X_0$ has probability density function $\rho_0(x)  = f_0(x) \exp( - x^2/2)/\sqrt{2\pi}$, then $X_t = X_0 + t$ has probability density function $\rho_t(x) = \rho_0(x-t) = f_t(x) \exp( - x^2/2)/\sqrt{2\pi}$.

We will encounter a similar structure for the second and third representations of the current algebra (and of the Lie algebra, see Section~\ref{sec:univariate}): The lowering and the neutral operator are related to continuous-time Markov processes, concretely spatial birth-death processes and measure-valued branching processes.

\section{Main results} \label{sec:results}

\subsection{Representation in extended Fock space} 
\label{sec:res-fock}

Let $(\mathbb X,\mathcal X)$ be a Borel space, for example, a complete metric separable space with its Borel $\sigma$-algebra. Let $\alpha$ be a $\sigma$-finite measure on $\mathbb X$. 

\subsubsection{Extended Fock space}   Define measures $\lambda_n$ on $\mathbb X^n$, equipped with the product $\sigma$-algebra $\mathcal X^n$, as follows. Let $\mathfrak S_n$ be the set of permutations of $\{1,\ldots,n\}$. Every permutation is a product of cycles with disjoint supports; the number of cycles is denoted $|\sigma|$. Given $f_n:\mathbb X^n\to \C$ a new function $f_{n}^\sigma:\mathbb X^{|\sigma|}\to \C$ is obtained by identifying variables that belong to the same cycle. For example, for $n=3$ and $\sigma = (1\ 3) (2)$, we get $f_3^\sigma(x,y) = f_3(x,y,x)$. The measure $\lambda_n$ on $\mathbb X^n$ is the uniquely defined measure for which 
\[
	\int f_n \dd \lambda_n  = \sum_{\sigma \in \mathfrak S_n} \int f_n^\sigma \dd \alpha^{|\sigma|} 
\] 
for all measurable $f_n:\mathbb X^n\to \R_+$. For example, we have $\lambda_1= \alpha$ and 
\[
	\int f_2 \dd \lambda_2 = \int f_2(x,y)\alpha(\dd x) \alpha (\dd y) + \int f(x,x) \alpha(\dd x). 
\] 
Let $L^2(\lambda_n) = L^2(\mathbb X^n,\mathcal X^n,\lambda_n)$ be the space of complex-valued, square-integrable functions $f_n$ and $L_\mathrm s^2(\lambda_n)$ be the subspace of symmetric functions, i.e., for every permutation $\sigma\in \mathfrak S_n$, 
$
	f_n(x_{\sigma(1)},\ldots, x_{\sigma(n)}) = f_n(x_1,\ldots,x_n)
$
for $\lambda_n$-almost all $(x_1,\ldots,x_n)$. The \emph{interacting} or \emph{extended Fock space} $\mathfrak F$ \cite{kondratiev-silva-streit-us1998}   consists of the sequences $(f_n)_{n\in \N_0}$ where $f_0\in \C$, $f_n\in L_\mathrm s^2(\lambda_n)$, and 
\[
	|f_0|^2+\sum_{n=1}^\infty \frac{1}{n!}\int |f_n|^2\dd \lambda_n<\infty. 
\] 
The extended Fock space is equipped with the scalar product 
\[
	\la (f_n)_{n\in \N_0}, (g_n)_{n\in \N_0}\ra = \overline{f_0} g_0 + \sum_{n=1}^\infty \frac{1}{n!} \int \overline{f_n} g_n\dd \lambda_n. 
\] 
The \emph{vacuum} is the sequence $\Psi = (1,0,0,\ldots)$. The symmetrized tensor product of functions $f_1,\ldots,f_n:\mathbb X\to \C$ is 
\[
	\bigl( f_1\otimes_\mathrm s\cdots \otimes_\mathrm s f_n\bigr)(x_1,\ldots,x_n) = \frac{1}{n!}\sum_{\sigma \in \mathfrak S_n} f_1(x_{\sigma(1)})\cdots f_{n}(x_{\sigma(n)}). 
\]
We shall often identify a function $F_n:\mathbb X^n\to \C$ with the sequence $(0,\ldots,F_n,0,\ldots)$ in $\mathfrak F$. With this identification in mind, the following lemma is useful. Remember the algebra $\mathcal C$ of bounded functions supported on sets of finite measure.

\begin{lemma} \label{lem:d-dense}
	The space $\mathcal D$ of finite linear combinations of the vacuum $\Psi$ and symmetrized tensor products $f_1\otimes_\mathrm s\cdots \otimes_\mathrm s f_n$ of functions $f_i\in \mathcal C$ is dense in $\mathfrak F$. 
\end{lemma}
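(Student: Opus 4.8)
The plan is to peel the problem apart level by level. Since $\mathfrak F$ is, by construction, the Hilbert-space direct sum of the weighted spaces $L_\mathrm s^2(\lambda_n)$ (with the $n=0$ summand equal to $\C\Psi$), the vectors having only finitely many non-zero components are dense. Hence it suffices to prove, for each fixed $n\geq 1$, that the linear span of symmetrized tensor products $f_1\otimes_\mathrm s\cdots\otimes_\mathrm s f_n$ with $f_i\in\mathcal C$ is dense in $L_\mathrm s^2(\lambda_n)$; the $n=0$ component is already spanned by $\Psi$. Indeed, approximating each component of a finite sequence separately and summing the finitely many approximations produces an element of $\mathcal D$ arbitrarily close in $\mathfrak F$.

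The core step is to show that the span of the \emph{non-symmetrized} indicator tensor products $\1_{B_1}\otimes\cdots\otimes\1_{B_n}$, with $\alpha(B_i)<\infty$, is dense in $L^2(\lambda_n)$; each factor $\1_{B_i}$ here lies in $\mathcal C$, being bounded and finitely supported. Let $V$ denote their closed linear span. First I would record that finite-measure rectangles have finite $\lambda_n$-measure (each summand $\int(\1_{B_1}\otimes\cdots\otimes\1_{B_n})^\sigma\dd\alpha^{|\sigma|}$ is bounded by $\prod_i\alpha(B_i)$), so $V\subset L^2(\lambda_n)$. Using $\sigma$-finiteness of $\alpha$, write $\mathbb X=\bigsqcup_m \mathbb X_m$ as a disjoint union with $\alpha(\mathbb X_m)<\infty$; then $\mathbb X^n$ is a countable disjoint union of product boxes $R=\mathbb X_{m_1}\times\cdots\times\mathbb X_{m_n}$, each of finite $\lambda_n$-measure. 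On a fixed box $R$ I would run a Dynkin (monotone class) argument: the collection $\{A\in\mathcal X^n:\, A\subset R,\ \1_A\in V\}$ is a $\lambda$-system (closed under proper differences because $V$ is a linear space, using $\1_{B\setminus A}=\1_B-\1_A$, and under increasing limits because $V$ is closed and $\lambda_n|_R$ is finite), and it contains the $\pi$-system of sub-rectangles of $R$, which generates $\mathcal X^n|_R$. Hence $\1_A\in V$ for every measurable $A\subset R$, and summing over the countably many boxes (the series converging in $L^2$ since $\sum_R\lambda_n(A\cap R)=\lambda_n(A)$) gives $\1_A\in V$ for every $A$ with $\lambda_n(A)<\infty$. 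As simple functions supported on sets of finite measure are dense in $L^2(\lambda_n)$, this yields $V=L^2(\lambda_n)$.

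Finally I would symmetrize. The measure $\lambda_n$ is invariant under coordinate permutations---this follows from its defining formula, since permuting the arguments of $f_n$ merely relabels the cycles of $\sigma$ while $\alpha^{|\sigma|}$ is itself symmetric---so the symmetrization operator $P_\mathrm s f(x_1,\ldots,x_n)=\frac{1}{n!}\sum_{\sigma\in\mathfrak S_n} f(x_{\sigma(1)},\ldots,x_{\sigma(n)})$ is the orthogonal projection of $L^2(\lambda_n)$ onto $L_\mathrm s^2(\lambda_n)$. Since $P_\mathrm s$ is continuous and onto, it maps the dense set $V$ to a dense subset of $L_\mathrm s^2(\lambda_n)$; and $P_\mathrm s(\1_{B_1}\otimes\cdots\otimes\1_{B_n})=\1_{B_1}\otimes_\mathrm s\cdots\otimes_\mathrm s\1_{B_n}$, which is of the required form with $f_i=\1_{B_i}\in\mathcal C$. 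Combining this with the level-by-level reduction completes the proof.

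I expect the main obstacle to be the middle step: unlike the Gaussian and Poisson settings, $\lambda_n$ is neither a product measure nor finite, so one cannot simply invoke density of tensor products in $L^2(\alpha^{\otimes n})$. The diagonal contributions to $\lambda_n$ must be captured, and the cleanest route I see is the localized Dynkin argument above, which handles them automatically because the rectangles generate the full product $\sigma$-algebra $\mathcal X^n$. Verifying the permutation-invariance of $\lambda_n$, needed to identify $P_\mathrm s$ with the orthogonal projection onto the symmetric subspace, is a minor but essential bookkeeping point.
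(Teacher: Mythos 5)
Your proof is correct and follows essentially the same route as the paper's: a monotone-class/Dynkin argument over the $\pi$-system of finite-measure rectangles, localized via the $\sigma$-finiteness of $\alpha$, combined with symmetrization (the paper symmetrizes throughout, you project at the end — a cosmetic difference). The only blemish is the claimed bound $\prod_i\alpha(B_i)$ for the summand $\int(\1_{B_1}\otimes\cdots\otimes\1_{B_n})^\sigma\,\dd\alpha^{|\sigma|}$: its actual value is $\prod_{c}\alpha\bigl(\bigcap_{i\in c}B_i\bigr)$, a product over the cycles $c$ of $\sigma$, which can exceed $\prod_i\alpha(B_i)$ when some $\alpha(B_i)<1$, but is of course still finite, which is all you need.
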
 

The lemma is proven in Section~\ref{sec:proof-extended-fock}.

\subsubsection{Exponential vectors} For $u\in \mathcal C$  with $\sup |u|<1$, we define a vector $\mathcal E_u \in \mathfrak F$ by 
\[
	(\mathcal E_u)_0 = 1,\quad (\mathcal E_u)_n(x_1,\ldots,x_n) = \prod_{i=1}^n u(x_i). 
\]
In terms of the raising operators introduced below, $\mathcal E_u = \exp( k^+(u))\Psi$, whence the name \emph{exponential vector} \cite[Chapter IV.1]{meyer2006quantum}. 
(Mathematical physicists may be more familiar with the notion of \emph{coherent states}; for bosons, these are just the  normalized exponential states, but for $SU(1,1)$ the relation is more subtle, see the remark after Theorem~\ref{thm:bch} below.)
We note 
\begin{equation}\label{eq:exp-scalar}
	\la \mathcal E_u,\mathcal E_v\ra = \exp\Bigl( - \int \log\bigl(1-\overline{u(x)} v(x)\bigr) \alpha(\dd x)\Bigr)
\end{equation}
where $- \log(1-z) = \sum_{j=1}^\infty z^j/j$ for $|z|<1$. Eq.~\eqref{eq:exp-scalar} already appeared in Boukas~\cite{boukas1988thesis,boukas1991}, however Boukas's Hilbert space is more abstract than our interacting Fock space.

 The linear combinations of exponential vectors are dense in $\mathfrak F$, moreover if $u_1,\ldots,u_n$ are linearly independent in $L^2(\mathbb X, \alpha)$, then the associated exponential vectors are linearly independent in $\mathfrak F$.  These properties are proven in the same way as their counterparts in the usual bosonic Fock space, we refer to Meyer \cite[Chapter IV.1.3]{meyer2006quantum}. 

\subsubsection{Representation of the $su(1,1)$ current algebra}
Define  \emph{raising, lowering and neutral operators} $k^+(\varphi), k^-(\varphi), k^0(\varphi):\mathcal D\to \mathfrak F$, indexed by $\varphi \in \mathcal C$, as follows: for $n\in \N$,
\begin{align*}
	\bigl(k^+(\varphi) f\bigr)_n(x_1,\ldots,x_n) & = \sum_{i=1}^n \varphi(x_i) f_{n-1}(x_1,\ldots,x_{i-1},x_{i+1},\ldots,x_n),\\
	\bigl(k^-(\varphi) f\bigr)_n(x_1,\ldots,x_n) & = (n+1) \int \overline{\varphi(y)} f_{n+1}(y,x_1,\ldots,x_n) \alpha(\dd y) \\
		&\qquad \qquad + \sum_{i=1}^n \overline{\varphi(x_i)} f_{n+1}(x_1,\ldots,x_n,x_i),\\
	\bigl(k^0(\varphi) f\bigr)_n(x_1,\ldots,x_n) &= \Bigl( \sum_{i=1}^n \varphi(x_i)+ \frac 12\int \varphi \dd \alpha \Bigr) f_n(x_1,\ldots,x_n).
\end{align*} 
For $n=0$ we use the same expressions, with $\sum_{i=1}^0 (\cdots) =0$. In particular, $(k^+(\varphi) f)_0=0$, $(k^-(\varphi) f)_0 = \int \overline{\varphi} f_1 \dd \alpha$, and $(k^0(\varphi) f)_0 = (\frac12 \int \varphi \dd \alpha) f_0$. 

Similar operators already appear in {\'S}niady \cite{sniady2000} and Lytvynov \cite{lytvynov2003JFA}. {\'S}niady studies them in the context of the renormalized algebra of the square of white noise, Lytvynov investigates infinite-dimensional orthogonal polynomials and relates Jacobi fields to the square of white noise in \cite{lytvynov2004square-of-white-noise}. Neither of them explicitly comments on the relation with the $su(1,1)$ current algebra. 

\begin{theorem} \label{thm:fock-rep}
	The operators $k^\pm(\varphi),k^0(\varphi):\mathcal D\to \mathfrak F$ form a Fock representation of the $su(1,1)$ current algebra in $\mathfrak F$ with vacuum $\Psi=(1,0,0,\ldots)$. 
\end{theorem}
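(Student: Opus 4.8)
The plan is to verify the six conditions of Definition~\ref{def:carep} in turn, reducing every computation to the dense domain $\mathcal D$ of (the vacuum and) symmetrized tensor products $f_1\otimes_\mathrm s\cdots\otimes_\mathrm s f_n$ with $f_i\in\mathcal C$ supplied by Lemma~\ref{lem:d-dense}. Conditions (ii) and (v) are immediate from the formulas: $\varphi\mapsto k^+(\varphi),k^0(\varphi)$ are visibly linear and $\varphi\mapsto k^-(\varphi)$ antilinear, while $k^-(\varphi)\Psi=0$ because $\Psi$ has no components in degree $\geq 1$. For (i) I would record the action on a generating vector: $k^+(\varphi)$ turns $f_1\otimes_\mathrm s\cdots\otimes_\mathrm s f_{n-1}$ into a multiple of $\varphi\otimes_\mathrm s f_1\otimes_\mathrm s\cdots\otimes_\mathrm s f_{n-1}$; $k^0(\varphi)$ produces $\tfrac12\bigl(\int\varphi\,\dd\alpha\bigr)$ times the tensor product plus $\sum_j f_1\otimes_\mathrm s\cdots\otimes_\mathrm s(\varphi f_j)\otimes_\mathrm s\cdots\otimes_\mathrm s f_n$; and $k^-(\varphi)$ yields a combination of contractions $\langle\varphi,f_j\rangle$ times the tensor product with $f_j$ deleted, together with diagonal terms in which two factors $f_j,f_k$ are merged into $\overline\varphi f_jf_k$. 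Since $\varphi f_j,\ \overline\varphi f_jf_k\in\mathcal C$, all outputs lie in $\mathcal D$, so $k^\#(\varphi)\mathcal D\subset\mathcal D$. For (vi), iterating the first formula gives $k^+(\varphi_1)\cdots k^+(\varphi_n)\Psi=n!\,\varphi_1\otimes_\mathrm s\cdots\otimes_\mathrm s\varphi_n$, so the span of the cyclic vectors is exactly $\mathcal D$, which is dense by Lemma~\ref{lem:d-dense}.

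The heart of the adjoint relation (iv) is a reduction formula for the measures $\lambda_n$ that peels off one variable:
\[
\int_{\mathbb X^n} h\,\dd\lambda_n=\int_{\mathbb X^{n-1}}\Bigl(\int_{\mathbb X}h(x_1,\ldots,x_{n-1},y)\,\alpha(\dd y)+\sum_{i=1}^{n-1}h(x_1,\ldots,x_{n-1},x_i)\Bigr)\dd\lambda_{n-1}.
\]
I would prove this by splitting the sum $\sum_{\sigma\in\mathfrak S_n}\int h^\sigma\,\dd\alpha^{|\sigma|}$ according to the cycle of $\sigma$ containing the index $n$: either $n$ is a fixed point, which contributes a fresh variable $y$ integrated against $\alpha$ and leaves a permutation of $\{1,\ldots,n-1\}$, or $n$ is inserted after one of the indices $j\in\{1,\ldots,n-1\}$ into an existing cycle, which forces $x_n=x_j$ and again leaves a permutation of $\{1,\ldots,n-1\}$. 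The two possibilities reproduce exactly the integral and the diagonal term of $k^-(\varphi)$. Feeding this identity into $\langle f,k^+(\varphi)g\rangle$, after using the symmetry of $f_n$ and $\lambda_n$ to collapse the sum over $i$ in $k^+$ into a single representative term, turns the degree-$n$ pairing into the degree-$(n-1)$ pairing $\langle k^-(\varphi)f,g\rangle$. The relation for $k^0$ is immediate, since $k^0(\varphi)$ is multiplication by $\sum_i\varphi(x_i)+\tfrac12\int\varphi\,\dd\alpha$ and conjugation sends this multiplier to that of $k^0(\overline\varphi)$.

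The main obstacle is the set of commutation relations (iii). Here I would compute the commutators directly on a general sequence $(f_n)$, keeping careful track of which variable each test function attaches to and of the two kinds of terms (the integral term with a freshly integrated variable and the diagonal terms) generated by $k^-$. The relations $[k^0(\varphi),k^\pm(\theta)]=\pm k^\pm(\varphi\theta)$ and the vanishing of the like-type commutators $[k^\#(\varphi),k^\#(\theta)]=0$ are bookkeeping, because $k^0$ acts diagonally and the raising (resp.\ lowering) operators simply stack (resp.\ contract) factors. The delicate identity is $[k^-(\varphi),k^+(\theta)]=2k^0(\overline\varphi\theta)$: expanding $k^-(\varphi)k^+(\theta)f$ and $k^+(\theta)k^-(\varphi)f$ produces several families of terms, and one must check that the contributions involving a freshly integrated variable cancel between the two orderings, while the surviving diagonal contributions recombine, using the symmetry of $f_n$, into $2\sum_i\overline{\varphi(x_i)}\theta(x_i)\,f_n$, and the leftover constant supplies precisely the $\int\overline\varphi\theta\,\dd\alpha\,f_n$ piece of $2k^0(\overline\varphi\theta)$. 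I expect this cancellation, which is exactly where the numerical coefficients in the definition of $k^-$ must be pinned down correctly, to be the only genuinely nontrivial computation; everything else is routine once the reduction formula for $\lambda_n$ is in hand.
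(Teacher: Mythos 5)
Your proposal is correct and follows essentially the same route as the paper: the same treatment of domain invariance, (anti)linearity, the vacuum, and cyclicity via $k^+(\varphi_1)\cdots k^+(\varphi_n)\Psi = n!\,\varphi_1\otimes_\mathrm s\cdots\otimes_\mathrm s\varphi_n$; the same key lemma for adjointness (the identity peeling one variable off $\lambda_{n+1}$, proved exactly as in the paper by splitting permutations according to the cycle containing the last index); and the same direct verification of the commutators, which the paper organizes via the decomposition $k^+=a^\dagger$, $k^-=a+b$, $k^0=\widehat n+\tfrac12\int\varphi\,\dd\alpha$ on symmetrized tensor products rather than on general sequences, a purely cosmetic difference. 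Your identification of $[k^-(\varphi),k^+(\theta)]=2k^0(\overline\varphi\theta)$ as the one place where the numerical coefficients of $k^-$ must be pinned down is exactly right -- note that the cancellation of the cross terms only closes if the integral term of $k^-$ carries the coefficient implicit in $a(\varphi)$, so this step does deserve the care you flag.
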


The operators $k^\#(\varphi)$ may be written as
\[
	k^+(\varphi)  = a^\dagger(\varphi),\quad k^-(\varphi) = a(\varphi)+ b(\varphi),\quad k^0(\varphi) = \widehat n(\varphi) + \frac12 \int \varphi \dd \alpha
\]
where 
\begin{align*}
	a^\dagger(\varphi) f_1\otimes_\mathrm s \cdots \otimes_\mathrm s f_n & = (n+1) \varphi \otimes_\mathrm s f_1\otimes_\mathrm s \cdots \otimes_\mathrm s f_n,\\
	a(\varphi) f_1\otimes_\mathrm s \cdots \otimes_\mathrm s f_n & = \frac1 n \sum_{i=1}^n \la \varphi,f_i\ra f_1\otimes_\mathrm s \cdots \widehat{f_i} \cdots \otimes_\mathrm s f_n,\\
	b(\varphi) f_1\otimes_\mathrm s \cdots \otimes_\mathrm s f_n & = \frac 2 n \sum_{1 \leq i <j \leq n} \bigl(\overline{\varphi} f_i f_j\bigr)\otimes_\mathrm s f_1\otimes_\mathrm s \cdots \widehat f_i \cdots \widehat f_j\cdots \otimes_\mathrm s f_n,\\
	\widehat n(\varphi) f_1\otimes_\mathrm s \cdots \otimes_\mathrm s f_n & = \sum_{i=1}^n  f_1\otimes_\mathrm s \cdots  \otimes_\mathrm s (\varphi f_i) \otimes_\mathrm s\cdots  \otimes_\mathrm s f_n.
\end{align*}
Here, as usual, $\widehat f_i$ means omission of $f_i$. The scalar product is $\la \varphi, f\ra = \int\overline{\varphi} f \dd \alpha$. 
In $a^\dagger(\varphi)$, $a(\varphi)$ we recognize the bosonic creation and annihilation operators; $\widehat n(\varphi)$ is a smeared bosonic number operator (think $\int \varphi(x) a_x^\dagger a_x \, \dd x$).
(Factors of $\sqrt{n+1}$ or $1/\sqrt{n}$ often included in explicit formulas \cite[Chapter X.7]{reedSimonII} are missing because the scalar product in our (extended) Fock space has the factor $1/n!$.) The operators satisfy 
\begin{equation}\label{eq:ccr}
	[a(f),a^\dagger(g)] = \la f, g\ra,\quad [\widehat n(f), a^\dagger(g)] = a^\dagger(fg),\quad 
	[\widehat n(f), a(g)]= - a(f\overline g).
\end{equation}

\subsection{Unitary operators, Baker-Campbell-Hausdorff formula}

For $\xi,\theta \in \mathcal C$ with real-valued $\theta$, define unitary operators, analogous to~\eqref{eq:arep}, by
\begin{equation} \label{eq:udef}
	U(\xi,\theta) =\exp\bigl( k^+(\xi) - k^-(\xi) \bigr) \exp\bigl( 2\mathrm i k^0(\theta)\bigr).
\end{equation} 
The second exponential on the right is a multiplication operator, with multiplier $\exp( \mathrm i \int \theta \dd \alpha + 2\mathrm i \sum_{j=1}^n \theta(x_j))$. The first exponential is by definition $\exp(\mathrm i A)$ with $A$ the self-adjoint closure of $\frac 1 i (k^+(\xi) - k^-(\xi))$. The latter operator is essentially self-adjoint on $\mathcal D$; the proof with Nelson's analytic vector theorem is similar to \cite[Lemma 4.1]{floreani-jansen-wagner2023algebraic} and Reed and Simon \cite[Theorem X.41(a)]{reedSimonII} and therefore omitted. 

The unitaries $U(\xi,\theta)$ act on exponential vectors in a fairly explicit way. 
For $z:\mathbb X\to \{\zeta \in \mathbb C:\, |\zeta|<1\}$, let
\begin{equation} \label{eq:zxi}
	z_\xi(x) = \frac{z(x) + \frac{\xi(x)}{|\xi(x)|} \tanh|\xi(x)| }{1+  z(x) \frac{\overline{\xi(x)}}{|\xi(x)|} \tanh|\xi(x)|}
\end{equation}
and 
\begin{equation} \label{eq:cxi}
	C_\xi (z)= \exp\Biggl( - \int \log \Bigl(\cosh |\xi(x)| + z(x) \frac{\overline{\xi(x)}}{|\xi(x)|} \sinh |\xi(x)|\Bigr) \alpha(\dd x) \Biggr).
\end{equation} 
As explained by us in \cite[Section 5]{floreani-jansen-wagner2023algebraic}, the equations are related to the action of $SU(1,1)$ on the open unit disk by M{\"o}bius transforms, and to representations of $SU(1,1)$ given by Bargmann \cite[Section 9]{bargmann1947}. 

\begin{theorem} \label{thm:group}
	Let $\xi:\mathbb X\to \C$, $\theta:\mathbb X\to \R$, $z:\mathbb X\to \C$ be bounded functions in $\mathcal C$ with $\sup|z|<1$. Then 
	\begin{align*}
		\exp\bigl( k^+(\xi) - k^-(\xi)\bigr) \mathcal E_z & = C_\xi(z) \mathcal E_{z_\xi},\\
		\exp\bigl( 2\mathrm i k^0(\theta)\bigr) \mathcal E_z & = \exp\Bigl( \mathrm i \int \theta\, \dd \alpha\Bigr) \mathcal E_{\exp(2\mathrm i \theta)z}.
	\end{align*}
\end{theorem}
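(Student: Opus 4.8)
The plan is to prove the two identities separately: the neutral factor is a one-line computation, and the raising–lowering factor reduces to an ODE for the parameter of an exponential vector. For the neutral factor I would note that $2\mathrm i k^0(\theta)$ acts on the level-$n$ component of any vector as multiplication by $\mathrm i\int\theta\,\dd\alpha+2\mathrm i\sum_{j}\theta(x_j)$. Exponentiating this (anti-self-adjoint) multiplication operator gives multiplication by $\exp(\mathrm i\int\theta\,\dd\alpha)\prod_j\e^{2\mathrm i\theta(x_j)}$; applying it to $\mathcal E_z$, whose $n$-th component is $\prod_j z(x_j)$, reproduces $\exp(\mathrm i\int\theta\,\dd\alpha)\,\mathcal E_{\e^{2\mathrm i\theta}z}$. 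Here $\e^{2\mathrm i\theta}z\in\mathcal C$ and $\sup|\e^{2\mathrm i\theta}z|=\sup|z|<1$ because $\theta$ is real, so the right-hand side is a legitimate exponential vector.

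For the first identity I would begin by recording how the generators act on exponential vectors. A direct computation from the definitions (equivalently from the $a(\varphi),b(\varphi)$ decomposition, and cross-checked against $\la\mathcal E_v,\,\cdot\,\ra$ using~\eqref{eq:exp-scalar} and the adjoint relations~\eqref{eq:adjoint-relations-k}) shows that, for $u\in\mathcal C$ with $\sup|u|<1$,
\[
  k^+(\varphi)\mathcal E_u=\frac{\dd}{\dd s}\Big|_{s=0}\mathcal E_{u+s\varphi},\qquad
  k^-(\varphi)\mathcal E_u=\la\varphi,u\ra\,\mathcal E_u+k^+\!\bigl(\overline\varphi\,u^2\bigr)\mathcal E_u,
\]
so $\mathcal E_u$ is a coherent-state-like eigenvector of the annihilation part. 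Subtracting gives the key relation
\[
  \bigl(k^+(\xi)-k^-(\xi)\bigr)\mathcal E_w=k^+\!\bigl(\xi-\overline\xi\,w^2\bigr)\mathcal E_w-\la\xi,w\ra\,\mathcal E_w .
\]

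Next I would make the ansatz $\Phi(t):=\exp\!\bigl(t(k^+(\xi)-k^-(\xi))\bigr)\mathcal E_z=C(t)\,\mathcal E_{w(t)}$ with $w(0)=z$, $C(0)=1$. Using $\frac{\dd}{\dd t}\mathcal E_{w(t)}=k^+(\dot w)\mathcal E_{w(t)}$ together with the key relation, the equation $\dot\Phi=(k^+(\xi)-k^-(\xi))\Phi$ becomes an identity of the form $a\,\mathcal E_{w}+k^+(B)\mathcal E_{w}=0$; reading off its level-$0$ and level-$1$ components forces $a=0$ and $B=0$ almost everywhere, so the ansatz is consistent precisely when
\[
  \dot w=\xi-\overline\xi\,w^2,\qquad \dot C=-\la\xi,w(t)\ra\,C .
\]
The first equation is a \emph{pointwise} Riccati equation — the only nonlocal quantity, $\la\xi,w\ra$, enters only the scalar $C$ — and separation of variables solves it explicitly as
\[
  w(t,x)=\frac{z(x)\cosh\!\bigl(t|\xi(x)|\bigr)+\frac{\xi(x)}{|\xi(x)|}\sinh\!\bigl(t|\xi(x)|\bigr)}{\cosh\!\bigl(t|\xi(x)|\bigr)+z(x)\frac{\overline{\xi(x)}}{|\xi(x)|}\sinh\!\bigl(t|\xi(x)|\bigr)},
\]
which at $t=1$ is exactly $z_\xi$ of~\eqref{eq:zxi}. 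Feeding this into the linear equation for $C$ and using the pointwise identity $\overline{\xi(x)}\,w(t,x)=\frac{\dd}{\dd t}\log\!\bigl(\cosh t|\xi(x)|+z(x)\tfrac{\overline{\xi(x)}}{|\xi(x)|}\sinh t|\xi(x)|\bigr)$, integration in $t$ and Fubini give $C(1)=C_\xi(z)$ of~\eqref{eq:cxi}. (Where $\xi=0$ the flow is stationary, $w(t,x)=z(x)$, consistent with the limits in~\eqref{eq:zxi}--\eqref{eq:cxi}.)

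Finally, to upgrade this formal computation to an identity in $\mathfrak F$ I would argue as follows. Because the $SU(1,1)$ Möbius flow preserves the open unit disk, $\sup_{t\in[0,1]}\sup_x|w(t,x)|<1$ uniformly (using $\sup|\xi|<\infty$ and $\sup|z|<1$); hence every $\mathcal E_{w(t)}$ lies in $\mathfrak F$, the $L^2(\lambda_n)$-estimates are uniform in $t$, and $t\mapsto C(t)\mathcal E_{w(t)}$ is strongly $\mathfrak F$-differentiable with the computed derivative. Writing $B$ for the skew-adjoint closure of $k^+(\xi)-k^-(\xi)$ and setting $g(t):=\exp(-tB)\,C(t)\mathcal E_{w(t)}$, the product rule yields $g'(t)=\exp(-tB)\bigl(-B+\tfrac{\dd}{\dd t}\bigr)\bigl(C(t)\mathcal E_{w(t)}\bigr)=0$, so $C(t)\mathcal E_{w(t)}=\exp(tB)\,g(0)=\exp(tB)\mathcal E_z=\Phi(t)$, and $t=1$ gives the claim. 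The \textbf{main obstacle} will be making this differentiation rigorous with the unbounded generator: one must verify that the curve $C(t)\mathcal E_{w(t)}$ remains in the domain of $B$ and that the action formula established on $\mathcal D$ extends to these exponential vectors. As with the essential self-adjointness asserted after~\eqref{eq:udef}, the clean remedy is to observe that exponential vectors are analytic vectors for $k^+(\xi)-k^-(\xi)$, which both makes $\exp(tB)\mathcal E_z$ an entire $\mathfrak F$-valued curve and legitimizes the termwise manipulations above.
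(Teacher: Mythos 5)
Your proposal is correct and follows essentially the route the paper itself indicates: the proof is omitted there with a reference to Proposition~5.1 of \cite{floreani-jansen-wagner2023algebraic}, whose strategy --- differentiate $t\mapsto\exp\bigl(t(k^+(\xi)-k^-(\xi))\bigr)\mathcal E_z$, reduce to a pointwise Riccati equation for the parameter of the exponential vector plus a scalar linear equation for the prefactor, and conclude by uniqueness for the ODE generated by the skew-adjoint closure (the same argument the paper uses to prove Theorem~\ref{thm:bch} and sketches again for Proposition~\ref{prop:araki}). Your formal computations all check out: the action of $k^{\pm}(\varphi)$ on exponential vectors (your formula for $k^-(\varphi)\mathcal E_u$ is the one consistent with~\eqref{eq:exp-scalar}, the adjoint relations and the commutation relations), the explicit solution of $\dot w=\xi-\overline\xi\,w^2$, and the evaluation of $C(1)$ reproduce exactly~\eqref{eq:zxi} and~\eqref{eq:cxi}.
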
 

The proof of the first formula is similar to Proposition~5.1 in \cite{floreani-jansen-wagner2023algebraic} and therefore omitted. The second formula is straightforward, we leave the details to the reader. 

The $2\times 2$ matrices in $SU(1,1)$ satisfy the Baker-Campbell-Hausdorff formula
\begin{equation} \label{eq:bch-univariate}
	\exp\Bigl( \xi k^+ - \overline{\xi} k^-\Bigr) = \exp\Bigl( \frac{\xi}{|\xi|} (\tanh|\xi|) k^+\Bigr) 
	\Bigl(\frac{1}{\cosh|\xi|}\Bigr)^{2k^0} \exp\Bigl( - \frac{\xi}{|\xi|} (\tanh|\xi|) k^-\Bigr).
\end{equation}
The unitaries $U(\xi,0)$ satisfy a similar equation. 

\begin{theorem}[Baker-Campbell-Hausdorff formula] \label{thm:bch}
	For all $\xi \in \mathcal C$ and all $f\in \mathcal D$, we have 
	\begin{equation} \label{equation: baker campbell hausdorff formula}
		\exp\Bigl( k^+(\xi) - k^-(\xi)\Bigr)  f
		= \exp\bigl(k^+(v)\bigr) \exp\bigl( k^0(w)\bigr) \exp\bigl( - k^-(v)\bigr) f,
	\end{equation} 
	where 
	$v := \1_{\{\xi\neq 0\}} \frac{\xi}{|\xi|} \tanh|\xi|$ and $w:=- 2\log \cosh|\xi|$.  
\end{theorem}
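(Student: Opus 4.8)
The plan is to prove the identity by embedding it in a one-parameter family and reducing to a first-order linear operator ODE. For $t \in \R$ set
\[
	v_t := \1_{\{\xi\neq 0\}} \frac{\xi}{|\xi|}\tanh(t|\xi|), \qquad w_t := -2\log\cosh(t|\xi|),
\]
so that $v_1 = v$, $w_1 = w$ and $v_0 = w_0 = 0$, and both $v_t, w_t$ lie in $\mathcal C$. Define the unitary $U_t := \exp\bigl(t(k^+(\xi) - k^-(\xi))\bigr)$ and the disentangled product $V_t := \exp(k^+(v_t))\exp(k^0(w_t))\exp(-k^-(v_t))$. Both satisfy $U_0 f = V_0 f = f$, and $U_t f$ solves the ODE $\frac{\dd}{\dd t}(U_t f) = (k^+(\xi) - k^-(\xi)) U_t f$. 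The goal is to show $V_t f$ solves the same ODE with the same initial datum; uniqueness then gives $U_t f = V_t f$ for all $t$, and evaluation at $t = 1$ yields \eqref{equation: baker campbell hausdorff formula}.

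The algebraic heart is a set of disentangling (adjoint-action) identities that follow directly from the commutation relations \eqref{eq:commutation-relations-k}. Crucially, the relevant series $\exp(X) Y \exp(-X) = \sum_m \frac{1}{m!}(\mathrm{ad}_X)^m Y$ terminate after at most three terms, because iterated brackets eventually produce raising operators, which commute among themselves. Concretely, on $\mathcal D$,
\begin{align*}
	\exp(k^+(v))\, k^0(\varphi)\, \exp(-k^+(v)) &= k^0(\varphi) - k^+(v\varphi),\\
	\exp(k^+(v))\, k^-(\varphi)\, \exp(-k^+(v)) &= k^-(\varphi) - 2 k^0(\overline{\varphi} v) + k^+(\overline{\varphi} v^2),\\
	\exp(k^0(w))\, k^-(\varphi)\, \exp(-k^0(w)) &= k^-(\e^{-w}\varphi),
\end{align*}
the last using that $w$ is real. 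These are finite, purely algebraic identities and involve no convergence beyond that of the defining exponentials.

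Next I would differentiate $V_t f$ by the product rule, using $\frac{\dd}{\dd t}\exp(k^+(v_t)) = k^+(\dot v_t)\exp(k^+(v_t))$ and likewise for the $k^0$ and $k^-$ factors (legitimate since the $k^+$'s mutually commute, as do the $k^0$'s and the $k^-$'s), with $\dot v_t = \xi \cosh^{-2}(t|\xi|)$ on $\{\xi \neq 0\}$ and $\dot w_t = -2|\xi|\tanh(t|\xi|)$. Conjugating the three resulting terms back through the preceding factors by the identities above collects everything into the form $\frac{\dd}{\dd t}(V_t f) = \Omega_t V_t f$, where $\Omega_t$ is a linear combination of $k^+, k^0, k^-$ whose arguments are
\[
	k^-:\ \e^{-w_t}\dot v_t, \qquad k^0:\ \dot w_t + 2\e^{-w_t}\overline{\dot v_t}\, v_t, \qquad k^+:\ \dot v_t - v_t \dot w_t - \e^{-w_t}\overline{\dot v_t}\, v_t^2.
\]
Inserting the explicit $v_t, w_t$ and using $\cosh^{-2}(s) + \tanh^2(s) = 1$ together with $\overline\xi\, \xi = |\xi|^2$, these collapse to $\xi$, $0$, and $\xi$ respectively, so that $\Omega_t = k^+(\xi) - k^-(\xi)$ independently of $t$. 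This is exactly the ODE satisfied by $U_t f$.

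The hard part is the analytic justification rather than the algebra: the factors $\exp(k^\#(\cdot))$ are unbounded and $\exp(k^+(v))f \notin \mathcal D$ in general, so one must make precise both the termwise differentiation of the product of exponentials and the uniqueness of solutions of the operator ODE. I would handle this exactly as the self-adjointness and Theorem~\ref{thm:group} are handled in \cite{floreani-jansen-wagner2023algebraic}: the vectors in $\mathcal D$ (equivalently, the exponential vectors $\mathcal E_z$) are analytic for $k^+(\xi) - k^-(\xi)$ and for the component operators, which legitimizes the manipulations and yields uniqueness via Nelson-type arguments. As an independent consistency check at $t = 1$, one can evaluate both sides on an exponential vector using Theorem~\ref{thm:group}: the outer factors act cleanly, $\exp(k^+(v))\mathcal E_z = \mathcal E_{v+z}$ and $\exp(k^0(w))\mathcal E_z = \exp(\tfrac12\int w\,\dd\alpha)\,\mathcal E_{\e^{w} z}$, while the innermost $\exp(-k^-(v))\mathcal E_z$ is notably \emph{not} a scalar multiple of an exponential vector (exponential vectors are not eigenvectors of the $su(1,1)$ lowering operator)---which is precisely why the one-parameter ODE route is preferable to a factor-by-factor evaluation.
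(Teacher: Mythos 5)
Your proposal is correct and follows essentially the same route as the paper's proof: the same one-parameter family $v_t,w_t$, the same disentangling identities (which the paper states as commutators $[k^0(\theta),\e^{k^+(v)}]$, $[k^-(\theta),\e^{k^+(v)}]$, and $\e^{k^0(w)}k^-(\theta)\e^{-k^0(w)}$ in Lemma~\ref{lem:more-commutators}), the same three scalar ODEs collapsing the generator to $k^+(\xi)-k^-(\xi)$, and the same uniqueness-of-the-ODE conclusion via Stone's theorem for the left-hand side. The analytic points you flag (norm-differentiability of the product of unbounded exponentials, the domain of the closure of $k^-(\theta)$ on $\exp(k^+(v))f$) are exactly the ones the paper handles in Lemmas~\ref{lem:serconv}, \ref{lemma: product formula} and \ref{lem:more-commutators}(b), with the tedious estimates likewise deferred.
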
 
On the right side $\exp(k^0(w))$ is a multiplication operator and $\exp(\pm k^\pm(v))$ are defined by exponential series that converge on the relevant domains (Lemma~\ref{lem:serconv}). The theorem is proven in Section~\ref{sec:bch-proof}. 

\begin{remark} 
	The analogue of the \emph{Perelomov coherent state}  \cite{brif-vourdas-mann1996,perelomov1972}  is 
	$\mathscr C_\xi = \exp\bigl( k^+(\xi) - k^-(\xi)\bigr) \Psi$.	Theorem~\ref{thm:bch} yields $\mathscr C_\xi = \lambda_\xi \exp(k^+(v))\Psi$ with $\lambda_\xi = \exp(- \int  \log (\cosh|\xi|)\, \dd \alpha)$ and $v$ as given in the theorem. Thus, the coherent state $\mathscr C_\xi$ differs from the exponential state $\mathcal E_v$ by a normalization and an additional reparametrization from $\xi$ to~$v$.
\end{remark} 

Araki \cite{araki1969} emphasizes the role of expectation functionals $g\mapsto E(g)$ for current groups. In light of this, we mention as a corollary an explicit expression for the vacuum expectations. 

\begin{cor}	[Vacuum expectations] 
	For all $\xi\in \mathcal C$ and all real-valued $\theta \in \mathcal C$, 
	\[
		\la \Psi, U(\xi,\theta) \Psi\ra = \exp\Bigl( \mathrm i \int \theta\, \dd \alpha - \int\bigl( \log \cosh|\xi|\bigr)\, \dd\alpha\Bigr). 
	\] 
\end{cor}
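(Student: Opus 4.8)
The plan is to evaluate $U(\xi,\theta)\Psi$ directly, exploiting that the vacuum is the trivial exponential vector $\Psi = \mathcal E_0$, and then to read off the scalar product from~\eqref{eq:exp-scalar}. First I would dispose of the neutral factor: since $\Psi=\mathcal E_0$, the second identity of Theorem~\ref{thm:group} with $z\equiv 0$ gives $\exp(2\mathrm i k^0(\theta))\Psi = \exp(\mathrm i\int\theta\,\dd\alpha)\,\mathcal E_0 = \exp(\mathrm i\int\theta\,\dd\alpha)\,\Psi$ (equivalently, $\Psi$ is an eigenvector of $k^0(\theta)$ with eigenvalue $\frac12\int\theta\,\dd\alpha$). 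Because the scalar product is linear in its second argument and $\int\theta\,\dd\alpha\in\R$, this phase pulls straight out of the definition~\eqref{eq:udef}:
\[
	\la\Psi, U(\xi,\theta)\Psi\ra = \exp\Bigl(\mathrm i\int\theta\,\dd\alpha\Bigr)\bigl\la \Psi, \exp\bigl(k^+(\xi)-k^-(\xi)\bigr)\Psi\bigr\ra.
\]

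Next I would evaluate $\exp(k^+(\xi)-k^-(\xi))\Psi$. Taking $z\equiv 0$ in the first identity of Theorem~\ref{thm:group}, the reparametrised function is $0_\xi = \1_{\{\xi\neq 0\}}\frac{\xi}{|\xi|}\tanh|\xi| = v$ and the prefactor is $C_\xi(0)=\exp(-\int\log\cosh|\xi|\,\dd\alpha)=\lambda_\xi$, so that $\exp(k^+(\xi)-k^-(\xi))\Psi = \lambda_\xi\,\mathcal E_v$ --- this is precisely the Perelomov coherent state identity recorded in the remark after Theorem~\ref{thm:bch}. Since $\xi\in\mathcal C$ is bounded we have $\sup|v|=\tanh(\sup|\xi|)<1$, so $\mathcal E_v\in\mathfrak F$ and the scalar product formula~\eqref{eq:exp-scalar} applies with $u=0$, giving
\[
	\bigl\la \Psi, \exp\bigl(k^+(\xi)-k^-(\xi)\bigr)\Psi\bigr\ra = \lambda_\xi\,\la\mathcal E_0,\mathcal E_v\ra = \lambda_\xi\exp\Bigl(-\int\log(1-0)\,\dd\alpha\Bigr) = \lambda_\xi.
\]
Collecting this with the phase from the neutral part yields the claimed expression $\exp(\mathrm i\int\theta\,\dd\alpha)\,\lambda_\xi = \exp(\mathrm i\int\theta\,\dd\alpha - \int\log\cosh|\xi|\,\dd\alpha)$.

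Because the statement is a corollary of Theorems~\ref{thm:group} and~\ref{thm:bch}, I expect no serious obstacle; the only points requiring minor care are the degenerate case $\xi=0$, absorbed by the indicator $\1_{\{\xi\neq0\}}$ in the definition of $v$, and the bound $\sup|v|<1$, which is exactly what makes $\mathcal E_v$ a genuine element of $\mathfrak F$ and renders~\eqref{eq:exp-scalar} applicable. As a consistency check, unitarity of $U(\xi,\theta)$ forces $|\la\Psi,U(\xi,\theta)\Psi\ra|\le 1$, in agreement with $\log\cosh|\xi|\ge 0$ and hence $0<\lambda_\xi\le 1$.
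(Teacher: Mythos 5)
Your proposal is correct and follows exactly the route the paper intends: the paper states this as an immediate corollary of Theorem~\ref{thm:group} (equivalently, of the Perelomov coherent state identity $\mathscr C_\xi=\lambda_\xi\exp(k^+(v))\Psi$ after Theorem~\ref{thm:bch}) without writing out a proof, and your computation --- neutral factor acting on $\Psi=\mathcal E_0$ as the phase $\exp(\mathrm i\int\theta\,\dd\alpha)$, then $z=0$ in the squeeze formula and $\la\mathcal E_0,\mathcal E_v\ra=1$ from~\eqref{eq:exp-scalar} --- is precisely that argument. The only superfluous remark is that $\int\theta\,\dd\alpha\in\R$ is not needed to pull the scalar out of the second (linear) slot of the inner product, but this does no harm.
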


%
%
%
%
%

\subsection{Gamma random measure and Laguerre polynomials} \label{sec:rep-gamma} 

Next comes a representation in a probability space of random measures somewhat analogous to the Gaussian model for the canonical commutation relations from Section~\ref{sec:gauss-hermite} as the raising and lowering operators involve differential operators. 

Let $\mathbf M$ be the space of $s$-finite measures on $\mathbb X$ (a measure is $s$-finite if it is a countable sum of finite measures \cite{LastPenroseLecturesOnThePoissonProcess}). The space $\mathbf M$ is equipped with the $\sigma$-algebra generated by the variables $B\mapsto \mu(B)$ with $B\in \mathcal X$. Given the $\sigma$-finite reference measure $\alpha$, let $\rho_\alpha$ be the uniquely defined probability measure on $\mathbf M$ such that:
\begin{itemize}
	\item For all disjoint $B_1,\ldots, B_n\in \mathcal X$, the variables $\mu \mapsto \mu(B_i)$, $i=1,\ldots, n$, are independent. 
	\item For every $B\in \mathcal X$, the variable $\mu \mapsto \mu(B)$ has a Gamma distribution with shape parameter $\alpha(B)$ and scale parameter $1$: if $\alpha(B)<\infty$,
	\[
		\rho_\alpha \bigl( \{\mu:\mu(B) \leq y\}\bigr) =\frac{1}{\Gamma(\alpha(B))} \int_0^y t^{\alpha(B) - 1} \e^{-t} \dd t \quad (y\geq 0).
	\]
	When $\alpha(B) =0$ or $\infty$, we ask that $\mu(B) =0$ or $\infty$, $\rho_\alpha$-almost surely. 
\end{itemize}
The probability measure $\rho_\alpha$ is the law of a \emph{Gamma random measure}. It is the law of a compound Poisson process \cite[Chapter 15]{LastPenroseLecturesOnThePoissonProcess}: Let $\Pi = \sum_i \delta_{(X_i,Z_i)}$ be a Poisson point process on $\mathbb X \times (0,\infty)$ with intensity measure $\alpha(\dd x) z^{-1}\exp( -z) \dd z$, then 
$\xi = \sum_i Z_i \delta_{X_i}$ has law $\rho_\alpha$ \cite[Example 15.6] {LastPenroseLecturesOnThePoissonProcess}.

We work in the Hilbert space $L^2(\mathbf M, \rho_{\alpha})$. Let $\mathbb D$ be the space of polynomials in the variables $\mu(\varphi) = \int \varphi \dd \mu$, where $\varphi$ is in the algebra $\mathcal C$ of test functions from~\eqref{eq:c-algebra}. Thus $\mathbb D$ consists of the functions $F$ of the form 
\begin{equation} \label{eq:poly-gamma}
	F(\mu) = P_n\bigl(\mu(f_1),\ldots,\mu(f_n)\bigr)
\end{equation} 
with $n\in \N$, $f_1,\ldots, f_n\in \mathcal C$, and $P_n:\C^n \to \mathbb C$ a multivariate polynomial with complex coefficients. Notice that if $f\in \mathcal C$, then 
 $\mu(|f|)<\infty$ for $\rho_\alpha$-almost all $\mu \in \mathcal M$, thus $F(\mu)$ is well-defined except possibly on $\rho_\alpha$ null sets on which we may define it to be zero. 

\begin{lemma} \label{lem:polydense-gamma}
	The space $\mathbb D$ is dense in $L^2(\mathbf M,\rho_\alpha)$. 
\end{lemma}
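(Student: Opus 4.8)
The plan is to show that any $g \in L^2(\mathbf M,\rho_\alpha)$ orthogonal to $\mathbb D$ must vanish, by reducing to a finite-dimensional density statement and then invoking the analyticity of a Laplace transform. First I would reduce to finitely many variables. Since $\mathbb X$ is a Borel space, $\mathcal X$ is countably generated, and using $\sigma$-finiteness of $\alpha$ one can choose a countable family $(\varphi_j)_{j\geq 1}\subset \mathcal C$ of indicators $\1_B$ with $\alpha(B)<\infty$ whose counting variables $\mu\mapsto \mu(\varphi_j)$ generate the $\sigma$-algebra of $\mathbf M$. Writing $\mathcal G_m=\sigma(\mu(\varphi_1),\ldots,\mu(\varphi_m))$, the union $\bigcup_m L^2(\mathbf M,\mathcal G_m,\rho_\alpha)$ is dense in $L^2(\mathbf M,\rho_\alpha)$; equivalently, $\mathbb E[g\mid\mathcal G_m]\to g$ in $L^2$ by martingale convergence. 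It therefore suffices to prove, for each fixed $m$, that polynomials in $\mu(\varphi_1),\ldots,\mu(\varphi_m)$ are dense in $L^2(\mathbf M,\mathcal G_m,\rho_\alpha)\cong L^2(\mathbb R^m,\nu_m)$, where $\nu_m$ is the joint law of the vector $(\mu(\varphi_1),\ldots,\mu(\varphi_m))$.

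For the finite-dimensional statement I would use the standard criterion: if a probability measure $\nu$ on $\mathbb R^m$ has a finite exponential moment $\int e^{c|x|}\dd\nu<\infty$ for some $c>0$, then polynomials are dense in $L^2(\nu)$. The argument is the usual one via analyticity. If $h\in L^2(\nu)$ is orthogonal to all monomials, the Laplace transform $F(z)=\int h(x)\,e^{\langle z,x\rangle}\dd\nu(x)$ is well defined and analytic for $z$ in a complex neighborhood of the origin (by Cauchy--Schwarz together with the exponential moment), and all of its derivatives at $0$ vanish because they equal the moments $\int h\,x^\gamma\dd\nu=0$; hence $F\equiv 0$ by analytic continuation. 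Setting $z=\mathrm i t$ for $t\in\mathbb R^m$ then shows that the finite complex measure $h\,\dd\nu$ has vanishing Fourier transform, so $h=0$ $\nu$-almost everywhere.

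The one model-specific ingredient that makes this machine run is the exponential integrability, which is exactly where the Gamma structure enters. From the Laplace functional of the Gamma random measure (a direct consequence of the compound-Poisson representation recalled before the lemma and a Frullani integral),
\[
	\mathbb E\bigl[e^{t\mu(\varphi)}\bigr]=\exp\Bigl(-\int \log(1-t\varphi)\dd\alpha\Bigr),\qquad t\sup|\varphi|<1,
\]
so each $\mu(\varphi_j)$ has finite exponential moments near $0$. Since the $\varphi_j$ are indicators supported on a common set $S$ of finite $\alpha$-measure, one has $\sum_{j\leq m}|\mu(\varphi_j)|\leq m\,\mu(S)$ with $\mu(S)$ Gamma-distributed, so the whole vector $(\mu(\varphi_1),\ldots,\mu(\varphi_m))$ satisfies $\int e^{c|x|}\dd\nu_m<\infty$ for small $c>0$. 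Feeding this into the finite-dimensional lemma, and combining with the martingale reduction above, yields the claim.

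I expect the main obstacle to be organizational rather than conceptual: verifying that the chosen countable family of finite-measure indicators genuinely generates the $\sigma$-algebra on $\mathbf M$ (so the martingale step is legitimate) and carefully justifying the multivariate analytic-continuation and Fourier-uniqueness step in $L^2(\nu_m)$. The genuinely $\rho_\alpha$-specific ingredient, the exponential moment bound, is immediate from the Gamma Laplace functional, so the heart of the proof is the transfer of the classical ``exponential moments imply polynomial density'' principle to this infinite-dimensional setting.
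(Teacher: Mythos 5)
Your proposal is correct, but it takes a genuinely different route from the paper. The paper does not reduce to finite dimensions at all: it first shows, via the functional monotone class theorem applied to the multiplicatively closed family $\{\mu\mapsto \exp(-\mu(u))\}$ with $u$ a non-negative combination of finite-mass indicators, that linear combinations of such exponentials are dense in $\mathscr L^\infty(\mathbf M)$ (hence in $L^2(\mathbf M,\rho_\alpha)$), and then approximates each exponential by its truncated Taylor series, controlling the remainder with the same exponential integrability $\int \exp(\eps\,\mu(u))\,\rho_\alpha(\dd\mu)<\infty$ that you use. You instead condition on $\mathcal G_m=\sigma(\mu(\varphi_1),\ldots,\mu(\varphi_m))$, invoke martingale convergence, and run the classical ``exponential moments imply polynomial density'' argument in $L^2(\R^m,\nu_m)$ via analyticity of the Laplace transform and Fourier uniqueness --- which is essentially the Riesz-type moment-problem argument the paper alludes to in the remark after the lemma but then deliberately sidesteps. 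The trade-off: the paper's route avoids your two bookkeeping burdens (verifying that a countable family of finite-mass indicators generates the $\sigma$-algebra of $\mathbf M$ up to $\rho_\alpha$-null sets, and the multivariate analytic-continuation step), at the price of the monotone-class verification; your route is more modular and makes the role of moment determinacy explicit. Both proofs isolate the same model-specific input, the Gamma Laplace functional, and both in fact yield the stronger statement (density of polynomials in the masses $\mu(B)$ alone) that the paper needs later for cyclicity. The one step you should not wave away is the generation claim for $\sigma(\mu(\varphi_j):j\geq 1)$: you need your countable family to be a $\pi$-system containing an exhausting sequence of finite-$\alpha$-measure sets, so that $\mu(B)$ for arbitrary $B\in\mathcal X$ is recovered by a Dynkin-system argument using $\rho_\alpha$-a.s.\ finiteness of $\mu$ on the exhausting sets; with that in place the argument is complete.
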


The lemma is a classical infinite-dimensional variant of a result due to Riesz \cite{riesz1933}:  For probability measures on $\R$,  uniqueness in the moment problem implies density of polynomials. For the reader's convenience we give a self-contained proof in Section~\ref{sec:proofs-gamma}.

For a polynomial $F$ as in~\eqref{eq:poly-gamma}, $\mu \in \mathbf M$ and $x\in \mathbb X$, set
\begin{align*}
	\frac{\delta F}{\delta \mu(x)}(\mu)  &=  \frac{\dd}{\dd t} F(\mu+t\delta_x) \Big|_{t=0} = \sum_{j=1}^n f_j(x) \partial_j P_n\bigl( \mu(f_1),\ldots, \mu(f_n)\bigr),\\
	\frac{\delta^2 F}{\delta \mu(x)^2}(\mu)  &=  \frac{\dd^2}{\dd t^2} F(\mu+t\delta_x) \Big|_{t=0} = \sum_{i,j=1}^n f_i(x) f_j(x) \partial_i \partial_j P_n\bigl( \mu(f_1),\ldots, \mu(f_n)\bigr)
\end{align*}
where $\partial_j P_n$ is the partial derivative of $P_n$ with respect to the $j$-th variable. Finally, we define lowering, raising and neutral operators $K^\#(\varphi):\mathbb D\to \mathbb D$, where $\varphi \in \mathcal C$, by 
\begin{align*}
	K^-(\varphi) F(\mu) & =  \int \overline{\varphi(x)} \frac{\delta^2 F}{\delta \mu(x)^2} (\mu) \mu(\dd x) + \int \overline{\varphi(x)} \frac{\delta F}{\delta \mu(x)}(\mu) \alpha(\dd x) \\
	K^+(\varphi) F(\mu) &=  \int \varphi(x) \frac{\delta^2 F}{\delta \mu(x)^2} (\mu) \mu(\dd x) + \int \varphi(x) \frac{\delta F}{\delta \mu(x)}(\mu) (\alpha - 2\mu)(\dd x) \\
	& \qquad \qquad \qquad   + (\mu(\varphi) - \alpha(\varphi)) F(\mu) \\
	K^0(\varphi) F(\mu) & = - \int \varphi(x) \frac{\delta^2 F}{\delta \mu(x)^2} (\mu) \mu(\dd x) -  \int \varphi(x) \frac{\delta F}{\delta \mu(x)}(\mu) (\alpha- \mu)(\dd x) \\
	& \qquad \qquad \qquad  + \frac12 \alpha(\varphi) F(\mu).
\end{align*} 

\begin{remark} 
	For every real-valued test function $\varphi\in \mathcal C$, 
	\begin{equation}\label{eq:gamma-fieldop}
		K^+ (\varphi)+ K^- (\varphi)+ 2 K^0(\varphi) = \mu(\varphi).
	\end{equation}
	The right side is the multiplication operator with $\mu(\varphi)$. The above relation is in agreement with the univariate example (iii) from Section~\ref{sec:quantum-proba}. It should remind the reader of the definition of Segal\rq{}s field operator $\Phi_\mathrm S(f) $ as a sum of bosonic creation and annihilation operators \cite[Chapter X.7]{reedSimonII}.
\end{remark} 

For $\beta>0$, let $\mathscr L_n^{(\beta-1)}(x)$, $n\in \N_0$, be monic Laguerre polynomials, i.e., the polynomials of degree $n$ with leading coefficient $1$ that are orthogonal with respect to the measure $x^{\beta-1} \exp( - x)\dd x$ on $\R_+$. 

\begin{theorem} \label{thm:rep-laguerre}
	The operators $K^\pm(\varphi), K^0(\varphi):\mathbb D\to \mathbb D$, with $\varphi \in \mathcal C$, form a Fock representation in $L^2(\mathbf M,\rho_\alpha)$ of the $su(1,1)$ current algebra with vacuum $\Psi = \one$, the constant function $1$. Moreover, 
	\begin{equation} \label{eq:kplusit-laguerre}
		K^+(\1_{B_1})^{n_1}\cdots K^+(\1_{B_\ell})^{n_\ell} \1 (\mu)  =  \prod_{i=1}^{\ell} \mathscr L_{n_i}^{(\alpha(B_i)-1)} \bigl( \mu(B_i) \bigr)
	\end{equation}
	   for all $\ell\in \N$, disjoint $B_1,\ldots, B_\ell\in \mathcal{X}$ with $0 < \alpha(B_i) < \infty$, $n_1, \ldots, n_\ell \in \N$ and $\rho_{\alpha}$-almost all $\mu \in \mathbf{M}$.
\end{theorem}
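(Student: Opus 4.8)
The plan is to verify directly the six conditions of Definition~\ref{def:carep} and then to prove the Laguerre formula~\eqref{eq:kplusit-laguerre}, from which cyclicity will also follow. It is convenient to record that, writing $\delta_x F=\frac{\delta F}{\delta\mu(x)}$ and $\delta_x^2F=\frac{\delta^2F}{\delta\mu(x)^2}$, the three operators are assembled from the blocks $\int\varphi\,\delta_x^2F\,\mu(\dd x)$, $\int\varphi\,\delta_xF\,\mu(\dd x)$, $\int\varphi\,\delta_xF\,\alpha(\dd x)$ and the multiplier $\mu(\varphi)F$. Conditions (i) and (ii) are then immediate: for a polynomial $F$ as in~\eqref{eq:poly-gamma} the derivatives $\delta_xF$ and $\delta_x^2F$ are polynomials in the $\mu(f_j)$ whose coefficients are products $f_i(x)f_j(x)$, so each block returns either a new admissible variable $\mu(\varphi f_if_j)$ (note $\varphi f_if_j\in\mathcal C$) or a constant $\alpha(\varphi f_j)$; hence $K^\#(\varphi)$ maps $\mathbb D$ into itself, with $K^+(\varphi),K^0(\varphi)$ linear and $K^-(\varphi)$ antilinear. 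Density of $\mathbb D$ is Lemma~\ref{lem:polydense-gamma}. Condition (v) is read off directly: $\norm{\one}=1$ since $\rho_\alpha$ is a probability measure, and because every functional derivative of $\one$ vanishes we get $K^-(\varphi)\one=0$ and $K^0(\varphi)\one=\tfrac12\alpha(\varphi)\one$, confirming also the Bargmann-index property quoted after Definition~\ref{def:carep}.

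For condition (iii) I would compute the commutators of the four blocks using $\delta_x\mu(\psi)=\psi(x)$, the symmetry $\delta_x\delta_y=\delta_y\delta_x$, and the Leibniz rule; the only nonzero contributions come from the multiplier $\mu(\varphi)$ failing to commute with the derivative blocks and from a derivative hitting the measure $\mu(\dd x)$ inside a block. Collecting terms yields $[K^-(\varphi),K^+(\theta)]=2K^0(\overline\varphi\theta)$, $[K^0(\varphi),K^+(\theta)]=K^+(\varphi\theta)$, $[K^0(\varphi),K^-(\theta)]=-K^-(\overline\varphi\theta)$ and the vanishing of all like-sign commutators. Two consistency checks guide the bookkeeping: the identity $K^+(\varphi)+K^-(\varphi)+2K^0(\varphi)=\mu(\varphi)$ of~\eqref{eq:gamma-fieldop} for real $\varphi$, and the structural fact that for indicator test functions disjoint supports give commuting factors while a single common block reduces to the univariate $su(1,1)$ relations of Section~\ref{sec:univariate}.

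Condition (iv) is the genuinely analytic step and the main obstacle. I would establish an integration-by-parts formula for the Gamma random measure by writing $\mu=\sum_iZ_i\delta_{X_i}$ as the compound-Poisson integral of the Poisson process on $\mathbb X\times(0,\infty)$ with intensity $\alpha(\dd x)z^{-1}\e^{-z}\dd z$ and applying the Mecke equation. Since $\int_0^\infty z^k\e^{-z}\dd z=k!$, a Taylor expansion in the added mass collapses the Mecke identity to $\E[\mu(\psi)H]=\E\int\psi(x)\sum_{k\ge0}\delta_x^kH\,\alpha(\dd x)$ for polynomial $H$. Applying this with $H=\overline FG$ and also rewriting the $\mu(\dd x)$-blocks by the same formula, the specific second-order structure of $K^\pm,K^0$ is exactly what forces all contributions of order $k\ge3$ to cancel, leaving $\langle F,K^+(\varphi)G\rangle=\langle K^-(\varphi)F,G\rangle$ and the self-adjointness $\langle F,K^0(\varphi)G\rangle=\langle K^0(\overline\varphi)F,G\rangle$. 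On cylinder functions attached to disjoint blocks this reduces, by independence of the increments of $\mu$, to the one-dimensional rule for $x^{\beta-1}\e^{-x}\dd x$ with $\beta=\alpha(B)$, a routine integration by parts that serves as a further check. (Alternatively one may invoke the chaos-decomposition unitary for the Gamma measure to transport the relations already verified in Theorem~\ref{thm:fock-rep}, but I prefer to keep the argument self-contained.)

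Finally, for~\eqref{eq:kplusit-laguerre} I would reduce to a single block. Because $\1_{B_i}$ is supported on $B_i$, the operator $K^+(\1_{B_i})$ acts on a product $\prod_jg_j(\mu(B_j))$ only through the factor $g_i(\mu(B_i))$, and the $K^+(\1_{B_i})$ mutually commute by condition (iii); so it suffices to show $K^+(\1_B)^n\one=\mathscr L_n^{(\beta-1)}(\mu(B))$ for one block with $\beta=\alpha(B)\in(0,\infty)$. Writing $s=\mu(B)$, the formulas give $K^+(\1_B)g(s)=sg''(s)+(\beta-2s)g'(s)+(s-\beta)g(s)=:J^+g(s)$. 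Using the Laguerre differential equation $sp_n''+(\beta-s)p_n'+np_n=0$ and the relation $sp_n'=np_n+n(n+\beta-1)p_{n-1}$ for the monic polynomials $p_n=\mathscr L_n^{(\beta-1)}$, one finds $J^+p_n=sp_n-(2n+\beta)p_n-n(n+\beta-1)p_{n-1}$, which equals $p_{n+1}$ by the three-term recurrence; since $p_0=1$, induction yields $J^{+\,n}\one=p_n(s)$ and hence~\eqref{eq:kplusit-laguerre}. Condition (vi) then follows: these products of monic Laguerre polynomials span all polynomials in the block variables $\mu(B_i)$, and approximating an arbitrary $f\in\mathcal C$ by simple functions shows their linear span is dense in $\mathbb D$, which is dense in $L^2(\mathbf M,\rho_\alpha)$ by Lemma~\ref{lem:polydense-gamma}.
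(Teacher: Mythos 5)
Your proposal is correct and its skeleton coincides with the paper's: verify the six conditions of Definition~\ref{def:carep} directly, obtain the adjointness relations from an integration-by-parts identity for the Gamma random measure proved via the Mecke equation for the compound Poisson representation (this is exactly Proposition~\ref{prop:gamma-ibp} and Lemma~\ref{lem:gamma-adjoints} in the paper), and reduce~\eqref{eq:kplusit-laguerre} to a single block. Two sub-steps differ in a genuine way. First, your integration-by-parts formula $\E[\mu(\psi)H]=\int\psi(x)\sum_{k\ge0}\E[\delta_x^kH]\,\alpha(\dd x)$ is an all-orders version of the paper's first-order identity $\E[\int\varphi\,\delta_xF\,\mu(\dd x)]=\E[(\mu(\varphi)-\alpha(\varphi))F]$; the two are equivalent (iterate the first-order identity), and your form makes the structure more transparent, but the claim that ``all contributions of order $k\ge3$ cancel'' is asserted rather than carried out -- the paper's Lemma~\ref{lem:gamma-adjoints} shows this is a genuine, if routine, computation with the product rule for $\delta_x$ applied to $\overline FG$, and you should write it out. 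Second, for the single-block identity you prove the recursion $J^+p_n=p_{n+1}$ directly from the Laguerre differential equation, the derivative relation $sp_n'=np_n+n(n+\beta-1)p_{n-1}$, and the three-term recurrence; the paper instead defines the univariate operators by unitary conjugation from the $\ell^2(\N_0,w_\alpha)$ representation (so the recursion is built in) and identifies them as differential operators via the generating function (Proposition~\ref{prop:rep-uni-laguerre}). Your route is more self-contained and avoids Section~\ref{sec:laguerre-rep-uni} entirely; the paper's buys the identification of $-K^0+\frac12\alpha(\varphi)$ and $-K^-$ as Markov generators along the way. Your reduction to one block (disjoint supports give commuting operators acting on separate factors) replaces the paper's induction over $n_1+\cdots+n_\ell$ and is equivalent; the cyclicity argument is the same, resting on the strengthened form of Lemma~\ref{lem:polydense-gamma}.
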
 

The theorem is proven in Section~\ref{sec:proofs-gamma}.

We remark that for non-negative $\varphi \in \mathcal{C}$, the operators have probabilistic interpretations: 
The lowering operator $K^-(\varphi)$ can be interpreted as the formal generator of a measure-valued Markov process that is a variant of superprocesses studied by Stannat \cite{stannat2003}. The process does not leave the distribution $\rho_{\alpha}$ invariant.
The raising operator $K^+(\varphi)$ describes the time-evolution of densities: if a process with generator $K^-(\varphi)$ has an initial law $\pi_0= F_0 \rho_{\alpha}$ that is absolutely continuous with respect to $\rho_{\alpha}$ with Radon-Nikodym derivative $F_0$, then the law at time $t$ has Radon-Nikodym derivative $\exp( tK^+(\varphi)) F_0$. 

Moreover, the process with formal generator
$- K^0(\varphi) + \frac{1}{2} \alpha(\varphi)$
is known as \emph{Dawson-Watanabe measure-valued continuous-state branching process}, see \cite[Equation~(1.1)]{transitionFunction} or \cite[Equation~(38)]{CanonicalCorrelations}.
If $\varphi$ is constant to one and $\alpha$ is a finite measure, the generator has spectrum $-\N_0$. 
Consequently, the corresponding Markov semigroup behaves analogously to the Ornstein-Uhlenbeck semigroup in the Gaussian case \cite[Equation~(1.67)]{MalliavinCalculus} or in the Poisson case  \cite[Proposition~4]{last2016}.

\subsection{Pascal point process and Meixner polynomials}
\label{sec:rep-pascal} 

We conclude with a representation in a probability space of point configurations analogous to the Poisson-Charlier representation for the canonical commutation relations from Section~\ref{sec:poisson-charlier}.

Let $\mathbf N = \mathbf N(\mathbb X)$ be the space of finite or countable sums of Dirac measures (including the zero measure) on the Borel space $(\mathbb X,\mathcal X)$. The space $\mathbf N$ is equipped with the $\sigma$-algebra $\mathcal N$ generated by the counting variables $B\mapsto \eta(B)$, $B\in \mathcal X$. For $a>0$ the rising factorial (Pochhammer symbol) is 
\[
	(a)_0 =1,\quad (a)_n = a (a+1)\cdots (a+n-1).
\] 
For $p\in (0,1)$ and $\alpha$ a $\sigma$-finite measure $\alpha$ on $\mathbb X$, let $\rho_{p,\alpha}$ be the distribution of a Pascal point process, i.e., $\rho_{p,\alpha}$ is the uniquely defined measure under which:
\begin{itemize}
	\item  The counting variables $\eta(B_1),\ldots, \eta(B_k)$ are independent, for all $k\geq 2$ and all pairwise disjoint $B_1,\ldots, B_k\in \mathcal E$. 
	\item For each $B\in \mathcal E$ with $0 < \alpha(B)< \infty$, 
	\[
		\rho_{p,\alpha}\bigl( \{\eta:\, \eta(B) =n\}\bigr) 
 = (1-p)^{\alpha(B)} \frac{p^n}{n!} \bigl( \alpha(B)\bigr)_n.
	\] 
 	If $\alpha(B) = 0$ or $\infty$ then $\eta(B) = 0$ or $\infty$ respectively, $\rho_{p,\alpha}$ almost surely. 
\end{itemize} 
We work in the Hilbert space $L^2(\mathbf N,\rho_{p,\alpha})$. Functions are complex-valued and the scalar product is $\la f,g\ra = \int \overline{f}g \, \dd \rho_{p,\alpha}$. Let $\mathbb D$ be the set of polynomials in the variables $\eta(\varphi) = \int \varphi \dd \eta$, i.e., $\mathbb D$ is the set of functions of the form 
\[
	F(\eta) = P_n\bigl( \eta(\varphi_1),\ldots, \eta(\varphi_n)\bigr)
\] 
with $n\in \N$, $\varphi_1,\ldots, \varphi_n\in \mathcal C$, and $P_n:\C^n\to \C$ a multivariate polynomial. 

\begin{lemma} \label{lem:polydense-pascal}
	The space $\mathbb D$ is dense in $L^2(\mathbf N,\rho_{p,\alpha})$. 
\end{lemma}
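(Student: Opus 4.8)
The plan is to follow the same route as for the Gamma random measure in Lemma~\ref{lem:polydense-gamma}, replacing the Gamma marginals by negative binomial ones. Since the indicators $\1_B$ of sets $B$ of finite $\alpha$-measure belong to the algebra $\mathcal C$ from~\eqref{eq:c-algebra}, every polynomial in the counting variables $\eta(B) = \eta(\1_B)$ lies in $\mathbb D$; hence it suffices to prove that such polynomials are already dense in $L^2(\mathbf N, \rho_{p,\alpha})$.

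The one-dimensional building block is the observation that a single counting variable $\eta(B)$ with $0 < \alpha(B) < \infty$ has, under $\rho_{p,\alpha}$, a negative binomial law $\nu_B$ whose moment generating function
\[
	\sum_{n=0}^\infty (1-p)^{\alpha(B)} \frac{p^n}{n!} \bigl( \alpha(B)\bigr)_n \e^{tn} = \Bigl( \frac{1-p}{1 - p\,\e^t}\Bigr)^{\alpha(B)}
\]
is finite for all $t < \log(1/p)$, in particular in a neighbourhood of $t=0$. Thus $\nu_B$, supported on $\N_0$, has exponential moments; it is therefore determinate in the Hamburger moment problem, and by Riesz's theorem \cite{riesz1933} the polynomials in the single variable $\eta(B)$ are dense in $L^2(\nu_B)$. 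For finitely many pairwise disjoint sets $B_1,\ldots,B_k$ of finite $\alpha$-measure the variables $\eta(B_1),\ldots,\eta(B_k)$ are independent, so the restriction of $\rho_{p,\alpha}$ to the generated $\sigma$-algebra is the product $\nu_{B_1}\otimes\cdots\otimes\nu_{B_k}$; since a tensor product of dense polynomial subspaces is dense in the $L^2$ of a product measure, polynomials in $\eta(B_1),\ldots,\eta(B_k)$ are dense in $L^2(\nu_{B_1}\otimes\cdots\otimes\nu_{B_k})$.

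To pass to the infinite-dimensional setting I would use a martingale approximation. Writing $\mathbb X = \bigcup_m X_m$ with $X_m \uparrow \mathbb X$ and $\alpha(X_m) < \infty$ (possible since $\alpha$ is $\sigma$-finite), choose increasing finite measurable partitions $\mathcal P_m$ of $X_m$ into cells of finite $\alpha$-measure whose generated $\sigma$-algebras increase to $\mathcal X$; this is available because $(\mathbb X,\mathcal X)$ is a Borel space and hence $\mathcal X$ is countably generated. Let $\mathcal F_m = \sigma(\eta(C) : C \in \mathcal P_m)$. Then $(\mathcal F_m)_m$ is an increasing filtration with $\bigvee_m \mathcal F_m = \mathcal N$, so for every $f \in L^2(\mathbf N,\rho_{p,\alpha})$ Lévy's upward martingale convergence theorem gives $\E[f\mid \mathcal F_m] \to f$ in $L^2$. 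Each conditional expectation $\E[f \mid \mathcal F_m]$ is a function of the finitely many independent negative binomial variables $(\eta(C))_{C\in\mathcal P_m}$, and the previous paragraph lets me approximate it in $L^2$ by polynomials in those variables, which belong to $\mathbb D$. Combining the two approximations shows that $f$ lies in the $L^2$-closure of $\mathbb D$.

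The main obstacle is the one-dimensional determinacy step: everything hinges on verifying that the negative binomial marginals have exponential moments so that Riesz's theorem applies, and the moment generating function computation above settles exactly this. The remaining work is the by-now standard tensorization-plus-martingale packaging, which is identical to the Gamma case. A minor point requiring care is the choice of the exhausting partitions so that the filtration indeed generates $\mathcal N$ while all cells carry finite mass; this is precisely where the Borel-space hypothesis and the $\sigma$-finiteness of $\alpha$ are used.
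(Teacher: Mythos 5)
Your proof is correct, but it takes a genuinely different route from the paper's. The paper omits the proof of this lemma, stating only that it is ``similar to the proof of Lemma~\ref{lem:polydense-gamma}''; that proof proceeds by showing (via the functional monotone class theorem) that linear combinations of exponentials $\eta\mapsto\exp(-\eta(u))$, $u$ a non-negative combination of indicators of finite-mass sets, are dense in $\mathscr L^\infty$ and hence in $L^2$, and then approximating each exponential by its truncated Taylor series, using the finiteness of $\int\exp(\eps\,\eta(u))\,\rho_{p,\alpha}(\dd\eta)$ for small $\eps>0$. You instead argue through one-dimensional moment determinacy (exponential moments of the negative binomial marginal for $t<\log(1/p)$, hence determinacy, hence density of polynomials in $L^2(\nu_B)$ by Riesz), tensorize over finitely many disjoint cells using independence and the identification of $L^2$ of a product measure with the Hilbert tensor product, and then pass to the limit with L\'evy's upward martingale convergence along a filtration generated by refining finite partitions. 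Both arguments ultimately rest on the same analytic fact --- the negative binomial law has a finite moment generating function near the origin --- but they package it differently: the paper's route handles arbitrary bounded functionals in one stroke via the monotone class theorem and needs no exhausting partitions, while yours localizes the moment problem to one dimension at the cost of the filtration construction. The only step in your write-up that deserves one more line is the identity $\bigvee_m\mathcal F_m=\mathcal N$ up to $\rho_{p,\alpha}$-null sets: recovering $\eta(B)$ for a general $B\in\mathcal X$ from the cell counts requires a Dynkin-system argument (using that $\eta(X_m)<\infty$ almost surely so that differences of counts are well defined), but this is standard and you correctly identified it as the place where the Borel hypothesis and $\sigma$-finiteness of $\alpha$ enter.
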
 

The proof of the lemma is similar to the proof of Lemma~\ref{lem:polydense-gamma} and it is omitted. 
We introduce difference operators 
\[
	\mathrm D_x^+ F(\eta) = F(\eta+\delta_x) - F(\eta),\quad \mathrm D_x^- F(\eta) = F(\eta -\delta_x) - F(\eta)
\] 
and the constant $c = \sqrt{p}^{-1} - \sqrt p$. The backward difference $\mathrm D_x F(\eta)$ is only defined when $x$ belongs to the configuration $\eta$, i.e., $\eta(\{x\}) \geq 1$. For $F\in \mathbb D$, let
\begin{align}
	K^- (\varphi) F(\eta) & = \frac 1c \Bigl( \int \overline{\varphi(x)}\, \mathrm D_x^+ F(\eta)\, (\alpha + \eta)(\dd x)+ \int \overline{\varphi(x)}\, \mathrm D_x^- F(\eta)\, \eta(\dd x)\Bigr),\\
	K^+(\varphi) F(\eta) & = \frac 1c \Bigl( p  \int \varphi(x) F(\eta +\delta_x) (\eta+\alpha)(\dd x)
		+\frac 1 p \int \varphi(x) F(\eta- \delta_x) \eta(\dd x) \Bigr) \notag \\
			&\qquad \qquad \qquad - \frac 1c  \bigl( 2\eta(\varphi) + \alpha(\varphi)\bigr) F(\eta)  \\
	K^0(\varphi) F(\eta) &= - \frac1c \Bigl(\sqrt p \int \varphi(x)\, \mathrm D_x^+ F(\eta) \, (\alpha+\eta)(\dd x) +  \frac1{\sqrt p} \int \varphi(x)\, \mathrm D_x^- F(\eta)\, \eta(\dd x) \Bigr) \notag \\
	&\qquad \qquad \qquad + \frac 12\alpha(\varphi)  F(\eta). \label{eq:polyneutral}
\end{align} 

\begin{remark}	
	Let us denote the operator in the first line of~	\eqref{eq:polyneutral} by $\widehat N(\varphi)$ so that $K^0(\varphi) = \frac12 \alpha(\varphi) + \widehat N(\varphi)$. An explicit computation yields, for real-valued $\varphi\in \mathcal C$, 
	\[
		K^+(\varphi) + K^-(\varphi) + (\sqrt p +{\sqrt p}^{-1}) \widehat N(\varphi) 
		= c\, \eta(\varphi) - \sqrt{p}\, \alpha(\varphi).
	\] 
	 The equation is also similar to relations between quadratic white noise, Pascal laws and Meixner polynomials, see \cite[Section 17]{AccardiItoCalculus}, \cite[Section 4]{accardi2002renormalized} and case (iv) in Section~\ref{sec:quantum-proba}. 
\end{remark} 

Given $a>0$ and $p\in (0,1)$, let $\mathcal M_n(x;a,p)$ be the Meixner polynomials of degree $n$ with leading coefficients $1$, orthogonal with respect to the negative binomial law $\nu(\{x\}) = (1-p)^a (a)_x p^x/ x!$ on $\N_0$  \cite{HypergeometricOrthogonalPolynomials}.

\begin{theorem}  \label{thm:rep-meixner}
    The operators $K^+(\varphi)$, $K^0(\varphi)$, $K^-(\varphi)$, $\varphi \in \mathcal C$, are a Fock representation in $L^2(\mathbf N,\mathcal N,\rho_{p,\alpha})$ of the current algebra of $su(1,1)$ with vacuum $\one$, the constant function $1$. Moreover, with  $c = {\sqrt p}^{-1} - \sqrt p$,
    \begin{align}
        \label{eq:k+iterates}
        K^+(\one_{B_1})^{n_1}\cdots K^+(\one_{B_\ell})^{n_\ell} \one (\eta) = \prod_{j=1}^\ell c^{n_j} \mathcal M_{n_j} \bigl(\eta(B_j);\alpha(B_j),p\bigr)
    \end{align}
    for all $\ell\in \N$, disjoint $B_1,\ldots, B_\ell\in \mathcal{X}$ with $0 < \alpha(B_i) < \infty$, $n_1, \ldots, n_\ell \in \N$ and $\rho_{p, \alpha}$-almost all $\eta \in \mathbf{N}$.
\end{theorem}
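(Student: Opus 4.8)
The plan is to verify Theorem~\ref{thm:rep-meixner} in two parts: first that the operators $K^\pm(\varphi), K^0(\varphi)$ satisfy all six conditions of Definition~\ref{def:carep}, and second that iterated raising operators produce Meixner polynomials as in~\eqref{eq:k+iterates}. For the representation property, the domain-preservation (i) and the (anti)linearity (ii) are immediate from the explicit formulas, since each operator maps a polynomial of degree $d$ in the variables $\eta(\varphi_j)$ to a polynomial (the difference operators $\mathrm D_x^\pm$ acting on $\eta(\varphi)=\int\varphi\,\dd\eta$ produce $\pm\varphi(x)$, so degrees do not grow uncontrollably). The cyclicity condition (vi) should follow from Lemma~\ref{lem:polydense-pascal} together with the Meixner identity~\eqref{eq:k+iterates}, once that identity is established: the monic Meixner polynomials $\mathcal M_{n}(\cdot;\alpha(B),p)$ in a single variable $\eta(B)$ span all polynomials in $\eta(B)$, and products over disjoint blocks span all of $\mathbb D$.

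The two genuinely computational conditions are the commutation relations (iii) and the adjoint relations (iv). For (iii) I would compute the commutators directly on a test function $F\in\mathbb D$ using the algebra of the difference operators: the key elementary facts are that $\mathrm D_x^+$ and $\mathrm D_y^+$ commute, that integration against $(\alpha+\eta)(\dd x)$ versus $\eta(\dd x)$ interchanges upon shifting $\eta\mapsto\eta\pm\delta_x$, and that the forward/backward shifts satisfy relations like $\int \varphi(x) G(\eta-\delta_x)\eta(\dd x)$ evaluated after a further shift picks up the extra atom. The cleanest route is probably to first derive the canonical-commutation-style identities for the constituent creation/annihilation/number pieces (analogous to~\eqref{eq:ccr} in the Fock picture), then assemble $[K^-(\varphi),K^+(\theta)]=2K^0(\overline\varphi\theta)$ and the two $K^0$ relations from those building blocks. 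For the adjoint relations (iv) I would use that under $\rho_{p,\alpha}$ the Pascal point process satisfies a Mecke-type formula: the forward shift $\eta\mapsto\eta+\delta_x$ integrated against $(\alpha+\eta)(\dd x)$ is, up to the factor $p$, dual to the backward shift integrated against $\eta(\dd x)$, because of the negative-binomial weight $(1-p)^{\alpha(B)}p^n(\alpha(B))_n/n!$ and the ratio identity $(\alpha(B))_{n+1}/(\alpha(B))_n=\alpha(B)+n$. This gives the self-adjointness of $K^0(\varphi)$ for real $\varphi$ and the $K^+$--$K^-$ adjointness; condition (v), namely $K^-(\varphi)\one=0$, is immediate since $\mathrm D_x^\pm$ annihilate constants.

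For the Meixner identity~\eqref{eq:k+iterates} I would argue one block at a time and then use the factorization over disjoint $B_j$. On a single block $B$ with $\beta=\alpha(B)$, restrict attention to functions of $\eta(B)$ alone; there $K^+(\one_B)$ acts as a three-term (lowering/diagonal/raising) operator in the integer variable $m=\eta(B)$, and I would show that applying it to the constant function and iterating reproduces the three-term recurrence for the monic Meixner polynomials $\mathcal M_n(m;\beta,p)$ from~\cite{HypergeometricOrthogonalPolynomials}, with the factor $c^{n}$ accounting for the constant $c={\sqrt p}^{-1}-\sqrt p$. Concretely, I would check that $K^+(\one_B)$ restricted to this block equals $c$ times the operator $m\mapsto$ (Meixner raising-recurrence), so that $K^+(\one_B)^{n}\one$ is $c^{n}\mathcal M_n(\eta(B);\beta,p)$ by induction on $n$; the disjointness of the $B_j$ makes the operators $K^+(\one_{B_j})$ commute and act on independent coordinates, yielding the product. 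The main obstacle I expect is the bookkeeping in the commutator computation (iii): keeping track of which shifts are integrated against $\alpha$, against $\eta$, and against $\alpha+\eta$, and ensuring the cross terms combine into exactly $2K^0(\overline\varphi\theta)$ rather than leaving a spurious multiple of the identity. A disciplined approach via the intermediate creation/annihilation/number operators, verified first on exponential-type or single-block functions and then extended by density and (anti)linearity, should contain this bookkeeping and is the step I would write out most carefully.
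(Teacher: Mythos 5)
Your proposal follows essentially the same route as the paper: the paper also decomposes $K^\pm(\varphi),K^0(\varphi)$ into simpler creation/annihilation/multiplication building blocks $k^\#(\varphi)$, verifies the commutation relations on those and reassembles (Lemma~\ref{lem:commrel-meixner}), proves adjointness via exactly the Mecke/Papangelou-type identity~\eqref{eq:papangelou} you describe (Lemma~\ref{lem:self-adj-meixner}), and obtains~\eqref{eq:k+iterates} by restricting to a single block and invoking the univariate Meixner representation of Proposition~\ref{prop:rep-uni-meixner} with an induction over $n_1+\cdots+n_\ell$. The plan is sound and matches the paper's proof in structure and in all key ingredients.
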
 

\noindent The theorem is proven in Section~\ref{sec:proofs-pascal}.

To conclude, we remark that for non-negative $\varphi$, the operators are amenable to a probabilistic interpretation. 
The processes that appear are variants of the linear birth-death processes on $\N_0$ called discrete Bessel and discrete Laguerre processes by Miclo and Patie \cite{miclo-patie2019}, see also  Section~\ref{sec:meixner-rep-uni} below. The role of Meixner polynomials and the Pascal law (among others) for linear birth-death processes was already studied by Karlin and McGregor \cite{karlin-mcgregor1958}, see also Chapter 3 in Schoutens \cite{schoutens}. 

The lowering operator $K^-(\varphi)$ is the generator of a spatial birth-and-death process.
The neutral operator $K^0(\varphi)$ is of the form $\frac12 \alpha(\varphi) - L(\varphi)$ and $L(\varphi)$ is  the generator of a spatial birth and death process with reversible measure  $\rho_{p,\alpha}$. Furthermore, when the measure $\alpha$ is finite so that the constant function $\one$ is in $\mathcal C$, one can show that $L(\one)$ has discrete spectrum $- \N_0$. 

\section{The univariate case: three representations of \texorpdfstring{$su(1,1)$}{su(1,1)}} \label{sec:univariate}

Our three representations of the $su(1,1)$ current algebra generalize much simpler univariate counterparts  that we recall here. 
The benefit is twofold.  First, we connect to standard representation theory for $su(1,1)$ and univariate orthogonal polynomials. Second, the probabilistic interpretation brings in well known processes: linear birth-death processes on $\N_0$ and diffusions (squared Bessel and Laguerre) on $\R_+$.

\subsection{Representation in weighted \texorpdfstring{$\ell^2$}{l2} space}  \label{sec:univar-weighted}
There are several classes of unitary irreducible representations of $SU(1,1)$. The representations studied here all belong to what is called the ``discrete class, minimal $m$, $D_k^+$'' \cite[\textsection 5.i]{bargmann1947} or ``principal discrete series $D_j^+$'' \cite[Section C]{lindblad-nagel1970}. Within that class, the representation is uniquely characterized by a half-integer $m_0\in \{\frac 1 2, 1, \frac 3 2,\ldots\}$, sometimes called \emph{Bargmann index}. The neutral operator $k^0$ has simple eigenvalues $m_0,m_0+1,\ldots$ and the Casimir operator $(k^0)^2 - \frac12 (k^+k^-+ k^-k^+)$, analogous to total spin, is $m_0(1-m_0)$ times the identity operator. 

The following is an example of such a representation. It is essentially taken from Giardin{\`a}, Kurchan, Redig and Vafayi \cite{giardina-kurchan-redig-vafayi2009}; it differs from the usual representations in $\ell^2(\N_0)$ by a similarity transformation. Remember the rising factorial $(\alpha)_0 =1$, $(\alpha)_n = \alpha(\alpha+1)\cdots (\alpha+n-1)$. 
Fix $\alpha>0$ and define weights $w_\alpha(n) = (\alpha)_n/n!$.  We work in the Hilbert space $\ell^2(\N_0,w_\alpha)$ and define operators  $k^\pm$, $k^0$ with common dense domain $d$, the space of sequences $(f(n))_{n\in \N_0}$ that have at most finitely many non-zero entries.
The operators are given by 
\[
	k^+ f(n) = n f(n-1),\quad k^-f(n) = (\alpha+n) f(n+1),\quad k^0 f(n) = \Bigl(n+\frac \alpha 2\Bigr) f(n). 
\] 
Equivalently, with $e_n(j) = \delta_{n,j}$, 
\begin{equation} \label{eq:ken}
	k^+ e_n = (n+1) e_{n+1},\quad k^- e_n = (\alpha+n-1) e_{n-1},\quad k^0 e_n = \Bigl(n+\frac \alpha 2\Bigr)e_n.
\end{equation} 
and $k^- e_0 =0$. One easily checks that $k^\pm$ and $k^0$ map $d$ into itself and on $d$ they satisfy the commutation relations~\eqref{eq:su11algebra}. Moreover, $k^0$ is symmetric on $d$ and $\la f, k^+ g\ra = \la k^- f, g\ra$ for all $f,g \in d$.

When $\alpha$ is integer, we get a representation of the group $SU(1,1)$ with Bargmann index $m_0 = \alpha/2$. 
The representation maps  the $2\times 2$ matrix $A(\xi,\theta) \in SU(1,1)$ from Eq.~\eqref{eq:axitheta}  to the unitary operator $U(\xi,\theta) = \exp(\xi k^+ - \overline \xi k^-) \exp( 2\mathrm i \theta k^0)$.  When $\alpha$ is not integer, we get instead a representation of the universal covering group of $SU(1,1)$ \cite{ruehl-yunn1976}. (Notice that $A(\xi,\theta+2\pi) = A(\xi,\theta)$ no matter what, but $U(\xi,\theta+ 2\pi) = U(\xi,\theta)$ only if $\alpha$ is integer.)

\subsection{Representation with Laguerre polynomials} \label{sec:laguerre-rep-uni}

Let $\beta>-1$. The \emph{Laguerre polynomials} in hypergeometric form are given by \cite[Chapter 9.12]{HypergeometricOrthogonalPolynomials}
\[
	L_n^{(\beta)}(x) = \frac{(\beta+1)_n}{n!} \sum_{j=0}^n \frac{(-n)_j}{(\beta+1)_j} \frac{ x^j}{j!} 
	= \frac{(-x)^n}{n!}+ O(x^{n-1})
\]
they satisfy the orthogonality relation,
\begin{equation} \label{eq:ortho-laguerre}
	\frac{1}{\Gamma(\beta+1)} \int_0^\infty L_n^{(\beta)}(x)L_m^{(\beta)}(x)  x^\beta \e^{-x}  \dd x
	= \delta_{m,n} \frac{(\beta+1)_n}{n!}. 
\end{equation} 
For $\alpha>0$, let $\ell^2(\N_0,w_\alpha)$ be the Hilbert space with weights $w_\alpha(n) = (\alpha)_n/n!$ from Section~\ref{sec:meixner-rep-uni}. Let $\mu_\alpha$ be the probability measure on $\R_+$ with density $\Gamma(\alpha)^{-1} x^{\alpha-1} \exp(-x)$ and $L^2(\R_+,\mu_\alpha)$ be the space of Borel-measurable square-integrable functions (complex-valued). By the orthogonality relation~\eqref{eq:ortho-laguerre}, the linear operator 
\[
	U:\ell^2(\N_0,w_\alpha)\to L^2(\R_+,\mu_\alpha),\quad f \mapsto Uf = \sum_{n=0}^\infty f(n) L_n^{(\alpha-1)}
\] 
is an isometry that maps $e_n$ to $L_n^{(\alpha-1)}$ and in particular $e_0$ to the constant function $\1$. The linear hull of $e_0,\ldots,e_n$ is mapped to the space of polynomials of degree at most $n$. 
Upon setting $K^\# = U^{-1} k^\# U$ we obtain a representation of the $su(1,1)$ algebra that satisfies 
$K^- \1 =0$ and
\begin{equation} \label{eq:kenlag}
	K^+ L_n = (n+1) L_{n+1},\ K^- L_n= (\alpha+n-1) L_{n-1},\quad K^0 L_n = \Bigl(n+\frac \alpha 2\Bigr)K^0 L_n,
\end{equation} 
compare~\eqref{eq:ken}. For better readability we have dropped the superscripts on the Laguerre polynomials. The following proposition identifies $K^\#$ as differential operators; it is similar to Proposition~4.13 in Groenevelt \cite{Groenevelt2019}.

\begin{prop} \label{prop:rep-uni-laguerre}
	We have
	\begin{align*}
		K^+& = - x \partial_x^2 + (2x-\alpha)\partial_x + (\alpha-x),\\
		K^- & = -  x \partial_x^2 - \alpha \partial_x,\\
		K^0 &= \frac{\alpha}{2} - \bigl( x\partial_x^2 + (\alpha-x)\partial_x\bigr). 
	\end{align*}
\end{prop}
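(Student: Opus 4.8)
The plan is to reduce everything to verifying the three operator identities on the Laguerre basis. By construction the isometry $U$ sends $e_n$ to $L_n := L_n^{(\alpha-1)}$, and these polynomials form a linear basis of the space of all polynomials, which is precisely the common domain on which $K^+,K^-,K^0$ and the three differential operators in the statement all act (note that each differential operator maps polynomials to polynomials, with $K^+$ raising the degree by one through its $-x$ term, $K^-$ lowering it, and $K^0$ preserving it). Two operators that send every $L_n$ to the same polynomial therefore coincide on the whole domain. Since $K^\# = U^{-1}k^\# U$ acts on the $L_n$ exactly as recorded in \eqref{eq:kenlag}, it suffices to check that the right-hand sides of the proposition reproduce those three actions.

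First I would record three standard facts about $L_n = L_n^{(\alpha-1)}$ in the hypergeometric normalization \cite{HypergeometricOrthogonalPolynomials}: the differential equation $x L_n'' + (\alpha-x)L_n' + n L_n = 0$; the structure relation $x L_n' = n L_n - (n+\alpha-1)L_{n-1}$; and the three-term recurrence $x L_n = -(n+1)L_{n+1} + (2n+\alpha)L_n - (n+\alpha-1)L_{n-1}$. The case of $K^0$ is then immediate: the differential equation gives $\bigl(x\partial_x^2 + (\alpha-x)\partial_x\bigr)L_n = -n L_n$, hence $K^0 L_n = \bigl(\tfrac{\alpha}{2}+n\bigr)L_n$. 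For $K^-$, substituting $-x L_n'' = (\alpha-x)L_n' + n L_n$ from the ODE collapses the second-order part and leaves $K^- L_n = -x L_n' + n L_n$; the structure relation then yields $(\alpha+n-1)L_{n-1}$. For $K^+$ the same ODE substitution reduces the operator to $K^+ L_n = x L_n' + (n+\alpha-x)L_n$; applying the structure relation and then the recurrence to eliminate the term $x L_n$ produces $(n+1)L_{n+1}$. All three outputs agree with \eqref{eq:kenlag}, which finishes the proof. (Alternatively, once $K^0$ and $K^-$ are in hand, one may read off $K^+$ from the recurrence, which says precisely that multiplication by $x$ equals $2K^0 - K^+ - K^-$.)

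The computation is routine in spirit, so I do not expect a genuine obstacle; the step requiring the most care is the verification of $K^+$, since it is second order and consumes all three Laguerre identities at once. The only real danger is bookkeeping: one must track the nonstandard normalization (leading coefficient $(-x)^n/n!$ rather than monic) and, above all, the index shift $\beta = \alpha-1$ consistently through the differential equation, the structure relation, and the recurrence, so that the factors $n+\alpha-1$ and $2n+\alpha$ come out correctly.
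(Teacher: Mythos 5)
Your proof is correct, but it takes a different route from the paper's. You verify the three claimed differential expressions on the basis $L_n = L_n^{(\alpha-1)}$ by quoting three classical Laguerre identities --- the differential equation $xL_n''+(\alpha-x)L_n'+nL_n=0$, the structure relation $xL_n'=nL_n-(n+\alpha-1)L_{n-1}$, and the three-term recurrence --- and all three identities are correctly adapted to the shifted parameter $\beta=\alpha-1$ and the normalization with leading coefficient $(-1)^n/n!$; I checked the three reductions and they come out as you claim, landing exactly on \eqref{eq:kenlag}. The paper instead works with the generating function $G_t(x)=\sum_n t^n L_n(x)=(1-t)^{-\alpha}\exp\bigl(xt/(t-1)\bigr)$: the actions $K^0-\tfrac{\alpha}{2}\mapsto t\partial_t$, $K^+\mapsto \partial_t$, $K^-\mapsto \alpha t+t^2\partial_t$ on the $t$-variable are converted into differential operators in $x$ by explicit computation on $G_t$. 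The two methods are logically equivalent (both amount to checking the operators on a basis of the polynomial space), but they trade off differently: the generating-function computation is self-contained and derives the differential expressions in one stroke without needing to cite the structure relation or the recurrence separately, while your term-by-term argument is more elementary per step and makes visible exactly which classical identity is responsible for which piece of each operator. Your closing observation that $2K^0-K^+-K^-$ is multiplication by $x$, which recovers $K^+$ for free from $K^0$ and $K^-$ via the recurrence, is a nice shortcut not present in the paper.
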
 

Notice that both $L = (\frac \alpha 2 - K^0)$ and $-K^-$ are infinitesimal generators of diffusions on $\R_+$. The operator $L$ is the generator of the \emph{Laguerre semigroup} \cite[Section~2.7.3]{BGL} which is the Markov semigroup of the \emph{Cox-Ingersoll-Ross process} (CIR) in mathematical finance to model interest rate movements \cite{CIR}. It has the Gamma distribution $\mu_\alpha$ as a reversible measure. The lowering operator corresponds to a squared Bessel process. 

\begin{proof}
	The generating function of our Laguerre polynomials is \cite[Eq.~(9.12.10)]{HypergeometricOrthogonalPolynomials}
	\[
		G_t(x) = \sum_{n=0}^\infty t^n L_n(x) = (1-t)^{-\alpha} \exp\Bigl( \frac{xt}{t-1}\Bigr). 
	\] 
	In view of Eq.~\eqref{eq:kenlag}, we have 
	\begin{align*}
		\bigl( K^0 - \frac{\alpha}{2}\bigr)G_t(x) & = \sum_{n=0}^\infty t^n n L_n(x) 
			= t\partial_t G_t(x) \\
		  & = -  \Bigl(\alpha \frac{t}{t-1} + \frac{xt }{(t-1)^2} \Bigr) G_t(x) \\
		  & = - \Bigl( x \frac{t^2}{(t-1)^2} + (\alpha - x) \frac{t}{t-1}\Bigr) G_t (x)\\
		  & = - (x\partial_x^2 + (\alpha-x)\partial_x) G_t(x). 
	\end{align*}
	This holds true for all sufficiently small $t$, the differential expression for $K^0$ follows. For the raising operators:
	\begin{align*}
		 K^+ G_t(x) & = \sum_{n=0}^\infty t^n (n+1)  L_{n+1}(x) = \sum_{n=1}^\infty n t^{n-1} L_n(x) = \partial_t G_t (x) \\ 
		 & = \Bigl( - \frac{\alpha}{t-1} - \frac{x}{(t-1)^2}\Bigr) G_t(x)  \\
		 & = \Bigl( - x\frac{t^2}{(t-1)^2} + (2x-\alpha)\frac{t}{t-1} +\alpha -x\Bigr) G_t(x) \\
		 & = \Bigl( - x \partial_x^2 + (2x-\alpha)\partial_x + (\alpha-x)\Bigr) G_t(x).
	\end{align*}
	Finally 
	\begin{align*}
		K^-G_t(x) &= \sum_{n=1}^\infty t^n (\alpha+n-1) L_{n-1}(x) \\
		& = \alpha t G_t(x) + t^2 \partial_t G_t(x) \\
		& = \Bigl( \alpha t - \alpha \frac{t^2}{t-1} - x\frac{t^2}{(t-1)^2}\Bigr) G_t(x)\\
		& =\Bigl( - \alpha \frac{t}{t-1} - x \frac{t^2}{(t-1)^2}\Bigr) G_t(x)\\
		& = - \bigl( x \partial_x^2 + \alpha \partial_x\bigr) G_t(x). \qedhere
	\end{align*}	
\end{proof} 

\subsection{Representation with Meixner polynomials} \label{sec:meixner-rep-uni}
For $p\in (0,1)$ and $\alpha>0$, the Meixner polynomials are given by \cite[Chapter 9.10]{HypergeometricOrthogonalPolynomials}
\[
	M_n(x;\alpha,p) = \sum_{j=0}^n \frac{(- n)_j (-x)_j}{(\alpha)_j} \frac{1}{j!} \Bigl( 1- \frac 1p\Bigr)^j = \frac{x^n}{(\alpha)_n} \Bigl(1-\frac 1p\Bigr)^n + O(x^{n-1})
\]	
They satisfy the orthogonality relation 
\begin{equation}\label{eq:ortho-meixner}
	(1-p)^{\alpha} \sum_{x=0}^\infty \frac{(\alpha)_x}{x!} p^x M_n(x;\alpha,p)M_m(x;\alpha,p) =  \Bigl(\frac{(\alpha)_n}{n!} p^n\Bigr)^{-1} \delta_{m,n}. 
\end{equation}
Consider the probability weights 
\[
	\rho_{p,\alpha}(x) = (1-p)^{-\alpha} \frac{p^x}{x!} (\alpha)_x \quad (x\in\N_0)
\] 
and the Hilbert space $\ell^2(\N_0,\rho_{p,\alpha})$. By the orthogonality relation~\eqref{eq:ortho-meixner}, the operator 
\[
	U:\ell^2(\N_0,w_\alpha)\to \ell^2(\N_0,\rho_{p,\alpha}),\quad f\mapsto Uf = \sum_{n=0}^\infty f(n) \frac{(\alpha)_n}{n!} \sqrt p^n M_n(\cdot;\alpha,p).
\]
is an isometry. Let $\mathcal M_n(x;\alpha,p)$ be the Meixner polynomials with leading coefficient $1$, then 
\[
	U e_n = \frac{1}{n!} \bigl(  \sqrt p - \sqrt p^{-1} \bigr)^{n} \mathcal M_n(\cdot;\alpha,p). 
\] 
Let us define $K^\# = U^{-1} k^\# U$, then we obtain operators with common domain the set of polynomials, that satisfy 
\begin{equation} \label{eq:kplusit-uni}
	K^+ \1 =0,\quad (K^+)^n \1 = (\sqrt p - \sqrt p^{-1})^n \mathcal M_n(\cdot;\alpha,p)
\end{equation} 
in analogy with $k^- e_0 =0$ and $(k^+)^n e_0 = n! e_n$. The following proposition identifies the operators $K^\#$. We define forward and backward difference operators 
\[
	\partial^+ f(x) = f(x+1) - f(x),\quad \partial^- f(x) = f(x-1) - f(x)
\] 
with $f(-1) =0$. The proposition is similar to Proposition~4.7 in Groenevelt \cite{Groenevelt2019}.

\begin{prop} \label{prop:rep-uni-meixner}
	For every polynomial $f:\N_0\to \C$, 
	\begin{align*}
		K^+ f(x) & = -  \frac{\sqrt p}{1-p}\Bigl( p(\alpha+x) f(x+1) + \frac1 p x f(x-1) -(\alpha + 2 x) f(x) \Bigr),\\
		K^- f(x) & = - \frac{\sqrt p}{1-p}\Bigl( (\alpha+x) \partial^+ f(x) + x\partial^-f(x)\Bigr),\\
		K^0 f(x) & = \frac\alpha 2 f(x) - \frac 1{1-p}\bigl( p(\alpha+x) \partial^+ f(x) + x \partial^- f(x)\bigr).
	\end{align*} 
\end{prop}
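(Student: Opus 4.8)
The plan is to mimic the strategy used in Proposition~\ref{prop:rep-uni-laguerre}: exploit a generating function for the Meixner polynomials to convert the action of $K^\#$ on the basis $\{\mathcal M_n\}$, prescribed by~\eqref{eq:kplusit-uni} and the transported relations~\eqref{eq:ken}, into difference operators in the variable $x$. Since $U$ is an isometry intertwining $k^\#$ with $K^\#$, and $U e_n$ is (up to a constant) the monic Meixner polynomial, the relations~\eqref{eq:ken} give directly $K^+ \mathcal M_n = (n+1)\mathcal M_{n+1}$, $K^0 \mathcal M_n = (n+\tfrac{\alpha}{2})\mathcal M_n$, and $K^- \mathcal M_n = (\alpha+n-1)\mathcal M_{n-1}$, after tracking the factors $(\sqrt p - \sqrt p^{-1})^n/n!$. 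The generating function for Meixner polynomials is
\[
	G_t(x) = \sum_{n=0}^\infty \frac{(\alpha)_n}{n!} t^n M_n(x;\alpha,p) = \bigl(1 - \tfrac{t}{p}\bigr)^{x}\,(1-t)^{-\alpha-x},
\]
and rewriting this in terms of the monic polynomials $\mathcal M_n$ (which differ by a factor $(\alpha)_n (1-1/p)^{-n}$) produces a generating function $\widetilde G_s(x) = \sum_n \frac{s^n}{n!}\mathcal M_n(x;\alpha,p)$ of the form $(1+as)^x (1+bs)^{-\alpha - x}$ for suitable constants $a,b$ depending on $p$.

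The key computational steps are then, for each operator, to apply the eigenvalue/ladder relation term-by-term to $\widetilde G_s$ and recognize the result as a combination of $s\partial_s$, $\partial_s$, and multiplication by $s$ acting on the generating function, exactly as in the Laguerre proof. First I would treat $K^0$: applying $(K^0 - \tfrac{\alpha}{2})$ picks out the factor $n$, so it equals $s\partial_s \widetilde G_s$; computing $s\partial_s \widetilde G_s$ explicitly and re-expressing the rational prefactors in $s$ as the difference operators $p(\alpha+x)\partial^+ + x\partial^-$ applied to $\widetilde G_s$ in the $x$-variable yields the claimed formula. Next $K^+$, which raises $n\mapsto n+1$ with weight $(n+1)$: this corresponds to $\partial_s \widetilde G_s$, and one identifies the prefactor with the stated three-term difference expression. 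Finally $K^-$ lowers with weight $(\alpha+n-1)$, which is a combination $\alpha s \widetilde G_s + s^2\partial_s \widetilde G_s$ analogous to the Laguerre case. Throughout, the crucial identity to verify is that multiplication of $\widetilde G_s$ by a rational function of $s$ coincides with applying a first- or second-order difference operator in $x$; this is checked by noting how $\partial^{\pm}_x$ act on the factors $(1+as)^x$ and $(1+bs)^{-\alpha-x}$, which only shift the exponents and hence multiply $\widetilde G_s$ by explicit functions of $s$.

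The main obstacle I expect is bookkeeping of the constants $a$, $b$, $c = \sqrt p^{-1} - \sqrt p$, and the prefactor $\sqrt p/(1-p)$, together with the normalization $(\sqrt p - \sqrt p^{-1})^n/n!$ relating $U e_n$ to $\mathcal M_n$. The difference operators $\partial^\pm$ in the variable $x$ act naturally on $(1+as)^x$ by shifting $x\mapsto x\pm 1$, giving factors $(1+as)^{\pm 1}$; matching these to the rational functions of $s$ produced by $\partial_s$ and $s\partial_s$ requires choosing $a$ and $b$ so that the algebra closes, and sign errors or misplaced factors of $p$ are the likely pitfalls rather than any conceptual difficulty. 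Once the generating-function identities are established for small $s$, they hold as identities between polynomials coefficient by coefficient, and since polynomials are dense (Lemma~\ref{lem:polydense-pascal} in the multivariate setting, here simply that polynomials span) the difference-operator formulas extend to all of $\mathbb D$, completing the proof.
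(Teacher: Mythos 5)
Your proposal follows essentially the same route as the paper's proof: use the Meixner generating function $G_t(x)=(1-t/p)^x(1-t)^{-\alpha-x}$, observe that $\partial^{\pm}$ in $x$ act on it by multiplication with explicit rational functions of $t$, and identify the ladder relations transported through $U$ with $t\partial_t$, $\partial_t$, and $\alpha t+t^2\partial_t$ acting on $G_t$. The only (cosmetic) differences are that you renormalize to the monic polynomials before forming the generating function, which amounts to the substitution $t\mapsto s/(1-1/p)$ in the paper's $G_t$, and that you compute $K^+$ directly via $\partial_s$ where the paper omits that computation and points to the adjointness relation as an alternative.
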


We remark that up to additive and multiplicative constants, the lowering and the neutral operators are the generators of linear birth-death chains on $\N_0$. As mentioned after Theorem~\ref{thm:rep-meixner}, those processes were already studied in connection with orthogonal polynomials by Karlin and McGregor\cite{karlin-mcgregor1958}; Miclo and Patie \cite{miclo-patie2019} provided intertwining relations between these linear birth-death chains and squared Bessel and Laguerre diffusions. Our representation clarifies the algebraic similarity of the processes: they admit the same formal expression with the $su(1,1)$ algebra. 

\begin{proof} [Proof of Proposition~\ref{prop:rep-uni-meixner}]
	The generating function of the Mexiner polynomials is \cite{HypergeometricOrthogonalPolynomials}
	\[
		G_t(x) = \sum_{n=0}^\infty \frac{(\alpha)_n}{n!}\, t^n M_n(x) = \Bigl(1- \frac t p \Bigr)^x (1-t)^{- \alpha - x}. 
	\] 
	We note 
	\begin{align*}
		\partial_+ G_t(x)  & = G_t(x+1) - G_t(x)  
				 = \Bigl( \frac{1- t/p }{1-t} - 1\Bigr) G_t(x) 
				 =  \frac{1- 1/ p}{1 - t} \, t G_t(x).
	\end{align*} 
	and 
	\[
		\partial_-  G_t(x) =  \Bigl( \frac{1- t }{1-t/p} - 1\Bigr) G_t(x)  =  \frac{1/p- 1}{1 - t/p} \, t G_t(x).
	\] 
	Therefore, in view of $(K^0 - \frac \alpha 2) M_n = n M_n$, we get 
	\begin{align*}
		\bigl( K^0 - \frac \alpha 2\bigr) G_t (x) & = \sum_{n=0}^\infty \frac{(\alpha)_n} {n!} t^n n M_n(x) = t \partial_t G_t(x) \\
		& = \Bigl( - \frac x  p\bigl( 1- \frac t p \bigr)^{-1} + (\alpha+ x) (1-t)^{-1} \Bigr) t G_t(x) \\
		& = - \frac{x}{1-p} \partial_- G_t(x) - \frac{p(\alpha+x)}{1-p}\partial_+ G_t(x). 
	\end{align*} 
	Turning to the lowering operators, we note first, $K^- M_0 =0$ and,  for $n\geq 1$, 
	\begin{align*}
		K^- \frac{(\alpha)_n}{n!} t^n M_n & = \bigl( \frac{t}{\sqrt p}\bigr)^n K^- U e_n = \bigl( \frac{t}{\sqrt p}\bigr)^n  U k^- e_n \\
		& = \bigl( \frac{t}{\sqrt p}\bigr)^n  (\alpha +n-1) U e_n \\
		& = \bigl( \frac{t}{\sqrt p}\bigr)^n (\alpha + n-1)  \frac{(\alpha)_{n-1}}{(n-1)!} {\sqrt p}^{n-1} M_{n-1} \\
		& = \frac{1}{\sqrt p} \frac{(\alpha)_{n}}{(n-1)!}\, t^n  M_{n-1}.
	\end{align*} 
	We change the summation index in $G_t$ from $n$ to $\ell = n-1$, note $(\alpha)_n = (\alpha)_{\ell+1} = (\alpha+\ell) (\alpha)_\ell$  and obtain
	\begin{align*}
		K^- G_t(x) & = \frac{1}{\sqrt p} \sum_{\ell=0}^\infty \frac{(\alpha)_{\ell}}{\ell!} (\alpha+ \ell) t^{\ell+1} M_{\ell}(x)  \\
		& = \frac{1}{\sqrt p} \bigl( \alpha t + t^2 \partial_t \bigr) G_t(x) \\
		& = \frac{1}{\sqrt p}\Bigl( \alpha t- \frac{x t^2}{p-t}+ \frac{(\alpha+x)t^2}{1-t} \Bigr) G_t(x)  \\
		& = \frac{1}{\sqrt p}\Bigl(- \frac{x}{1- t/p}+ \frac{\alpha+x}{1-t} \Bigr) tG_t(x)  \\
		& = - \frac{\sqrt p}{1-p} \bigl( x \partial_- G_t(x) + (\alpha + x) \partial_+ G_t(x)\bigr)
\end{align*} 
	The above relations hold true for all sufficiently small $t$, the expressions for $K^0$ and $K^-$ follow. 	The expression for the raising operator is proven by similar explicit computations that we omit. Alternatively, the reader may check the adjointness $\la f, K^+ g\ra = \la K^- f,g\ra$ on a dense subspace of polynomials in $\ell^2(\N_0,\rho_{p,\alpha})$. 
\end{proof}

\section{Proofs}\label{sec:proofs}
In this section we prove our main results, namely the three representations of the $su(1,1)$ current algebra, the Baker-Campbell-Hausdorff formula, and the explicit action of the unitaries $\exp(k^+(\xi) - k^-(\xi)) \exp( 2\mathrm i k^0(\theta))$ on exponential vectors. In addition, we prove an integration by parts formula for the Gamma random measure (Proposition~\ref{prop:gamma-ibp}) that is of interest in its own right.

\subsection{Representation in extended Fock space. Proof of Theorem~\ref{thm:fock-rep}} \label{sec:proof-extended-fock} 

We start with a lemma that helps prove the adjointness relations for the lowering and raising operators in extended Fock space. 

\begin{lemma} \label{lem:lambdan-seq}
	For all $n\in \N$ and non-negative or integrable $f:\mathbb X^{n+1}\to \C$, 
	\begin{equation}\label{eq:lambdan-seq}
		\int f \dd \lambda_{n+1} = \int \Bigl( \int f(x_1,\ldots, x_n,y) \alpha(\dd y) + \sum_{i=1}^n f(x_1,\ldots,x_n,x_i)\Bigr) \lambda_n(\dd \vect x).  
	\end{equation}
\end{lemma}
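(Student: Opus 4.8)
The plan is to unfold the definition of $\lambda_{n+1}$ as a sum over permutations and to organize that sum according to the role played by the last coordinate. By definition,
\[
	\int f\,\dd\lambda_{n+1} = \sum_{\sigma\in\mathfrak S_{n+1}} \int f^\sigma\,\dd\alpha^{|\sigma|},
\]
and I would split the permutations $\sigma\in\mathfrak S_{n+1}$ into those for which $n+1$ is a fixed point and those for which it is not. These two families will produce, respectively, the $\alpha$-integral term and the diagonal sum on the right-hand side of~\eqref{eq:lambdan-seq}. Throughout I would first argue for non-negative measurable $f$, where Tonelli's theorem lets me freely interchange the finite sums with the $\alpha$-integrals, and then extend to integrable $f$ by splitting into real, imaginary, positive and negative parts.

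First, suppose $n+1$ is a fixed point of $\sigma$. Then $\{n+1\}$ is a singleton cycle and $\sigma$ restricts to a permutation $\tau\in\mathfrak S_n$ with $|\sigma|=|\tau|+1$; this restriction is a bijection between the fixed-point permutations in $\mathfrak S_{n+1}$ and all of $\mathfrak S_n$. In forming $f^\sigma$ the $(n+1)$-st argument becomes a fresh variable integrated against $\alpha$, so with $g(x_1,\dots,x_n):=\int f(x_1,\dots,x_n,y)\,\alpha(\dd y)$ one checks that $\int f^\sigma\,\dd\alpha^{|\sigma|} = \int g^\tau\,\dd\alpha^{|\tau|}$. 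Summing over $\tau\in\mathfrak S_n$ and using the definition of $\lambda_n$ reproduces exactly $\int\bigl(\int f(x_1,\dots,x_n,y)\,\alpha(\dd y)\bigr)\,\lambda_n(\dd\vect x)$, the first term.

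Next, suppose $n+1$ is not a fixed point, so it sits in a cycle of length at least two. The key combinatorial step is a cycle-insertion bijection: such $\sigma$ correspond bijectively to pairs $(\tau,i)$ with $\tau\in\mathfrak S_n$ and $i\in\{1,\dots,n\}$, obtained by deleting $n+1$ from its cycle, i.e.\ setting $i:=\sigma^{-1}(n+1)$, $\tau(i):=\sigma(n+1)$, and $\tau:=\sigma$ elsewhere. This deletion preserves the number of cycles, so $|\sigma|=|\tau|$, and a count confirms the bijection since both sides have cardinality $n\cdot n!$. Since $n+1$ lies in the same cycle as $i$, identifying variables within cycles forces the $(n+1)$-st argument of $f$ to equal the $i$-th, whence $f^\sigma=g_i^\tau$ with $g_i(x_1,\dots,x_n):=f(x_1,\dots,x_n,x_i)$. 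Summing over $(\tau,i)$ and again invoking the definition of $\lambda_n$ yields $\int\sum_{i=1}^n f(x_1,\dots,x_n,x_i)\,\lambda_n(\dd\vect x)$, the second term, and combining the two cases gives~\eqref{eq:lambdan-seq}.

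I expect the main obstacle to be the verification in the second case. One must check carefully that the insertion/deletion map is a genuine bijection onto the non-fixed-point permutations, that it leaves the cycle count unchanged (including the degenerate transposition $(i\ n+1)$, where $i$ simply becomes a fixed point of $\tau$), and---most delicately---that the composite operation ``set the $(n+1)$-st argument equal to the $i$-th, then identify along the cycles of $\tau$'' reproduces exactly the variable identifications encoded by $f^\sigma$. Everything else is routine bookkeeping once these identifications are in place.
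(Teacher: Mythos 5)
Your proposal is correct and follows essentially the same route as the paper: split $\mathfrak S_{n+1}$ according to whether $n+1$ is a fixed point, match the fixed-point permutations with $\mathfrak S_n$ to get the $\alpha$-integral term, and delete $n+1$ from its cycle (preserving the cycle count) to get the diagonal sum. The only cosmetic difference is that you index the non-fixed-point case by $i=\sigma^{-1}(n+1)$ while the paper uses $i=\sigma(n+1)$; since all variables in a cycle are identified in $f^\sigma$, both parametrizations are equivalent.
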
 

\begin{proof}
	We have to evaluate $\sum_{\sigma \in \mathfrak S_{n+1}} \int f^{\sigma} \dd \alpha^{|\sigma|}$. The permutations with $\sigma(n+1)=n+1$ are in one-to-one correspondence with permutations in $\mathfrak S_n$, they contribute the first term on the right in~\eqref{eq:lambdan-seq}. Any permutation with $\sigma(n+1) = i \leq n$ admits a cycle decomposition with a single cycle containing $n+1$ and $i$ and remaining cycles containing neither $n+1$ nor $i$. Deleting $n+1$ from the cycle containing $n+1$ (e.g., $n+1\to i \to 3\to n+1$ becomes $i\to 3 \to i$), we obtain the cycle decomposition of a permutation $\tau \in \mathfrak S_n$ that has the same number of cycles. These permutations contribute the second term on the right in~\eqref{eq:lambdan-seq}. 
\end{proof} 

\begin{proof}[Proof of Lemma~\ref{lem:d-dense}]
	For $n\in \N$, let $V_n$ be the space of linear combinations of symmetrized tensor products of indicators of sets $B_j\in \mathcal X$ with $\alpha(B_j)<\infty$. It is enough to show that, for every $n\in \N$, the space $V_n$ is dense in $L^2_\mathrm s(\lambda_n)$. 

	Fix $n\in \N$. Cartesian products $B_1\times \cdots \times B_n$ of measurable sets with finite mass $\alpha(B_i)<\infty$ form a generating $\pi$-system of the product $\sigma$-algebra $\mathcal X^n$.  Therefore, by the functional monotone class theorem \cite[Theorem 2.12.9]{bogachev-vol1}, the linear combinations of the constant function and indicators of Cartesian products are dense, with respect to the supremum norm, in the space of bounded measurable functions from $\mathbb X^n$ to $\C$. It follows that the linear hull of $V_n$ and the constant function $1$ is dense, with respect to the supremum norm, in the space of bounded, measurable symmetric functions. 
	
	If $\alpha$ has finite total mass, then $\lambda_n(\mathbb X^n)<\infty$ as well and we have $\1\in V_n \subset L_\mathrm s^2(\lambda_n)$. Furthermore, uniform convergence implies $L^2(\lambda_n)$-convergence. Thus, every bounded function in $L_\mathrm s^2(\lambda_n)$ can be approximated by a function in $V_n$. As the bounded functions are dense in $L_\mathrm s^2(\lambda_n)$, we find that $V_n$ is dense.
	
	If $\alpha(\mathbb X) = \infty$, then the previous argument still applies to $L^2_\mathrm s(\lambda_n \1_{C^n})$, for every set $C\in \mathcal X$ with $\alpha(C)<\infty$ (hence $\lambda_n(C^n)<\infty$). As $\alpha$ is $\sigma$-finite, every function $f\in L_\mathrm s^2(\lambda_n)$ is the limit in $L^2$-norm of  a sequence $(f\1_{C_j^n})_{j\in \N}$ with $\alpha(C_j)<\infty$ and the proof of the  lemma is easily concluded. 
\end{proof} 

\begin{proof}[Proof of Theorem~\ref{thm:fock-rep}]
	By the expressions given above, the operators $k^\#(\varphi)$ map symmetrized tensor products to combinations of symmetrized tensor products, hence $\mathcal D$ is invariant. The common domain $\mathcal D$ is dense by Lemma~\ref{lem:d-dense}. Clearly $k^+(\varphi)$ and $k^-(\varphi)$ are linear in $\varphi$ while $k^-(\varphi)$ is antilinear in $\varphi$, moreover the vacuum is a unit vector annihilated by all lowering operators. The vacuum is cyclic because repeated application of raising operators gives symmetrized tensor products, whose linear hull is precisely the dense space $\mathcal D$. 
	
	For the commutation relations~\eqref{eq:commutation-relations-k}, we note
	\begin{align*}
		b(\varphi) a^\dagger (\psi) f_1\otimes_\mathrm s \cdots \otimes_\mathrm s f_n
		=  &2 \sum_{j=1}^n (\overline{\varphi}\psi f_j)\otimes_\mathrm s f_1\cdots \widehat f_j \cdots  \otimes_\mathrm s f_n \\
		& + 2 \sum_{1 \leq i < j \leq n} (\overline{\varphi}f_i f_j)\otimes_\mathrm s \psi\otimes_\mathrm s f_1 \cdots \widehat f_i\cdots \widehat f_j \cdots \otimes_\mathrm s f_n\\		
		 a^\dagger(\psi) b(\varphi) f_1\otimes_\mathrm s \cdots \otimes_\mathrm s f_n
		= &  2  \sum_{1\leq i < j \leq n} \psi \otimes_\mathrm s \bigl(\overline{\varphi} f_i f_j\bigr)\otimes_\mathrm s f_1\otimes_\mathrm s \cdots \widehat f_i \cdots \widehat f_j\cdots \otimes_\mathrm s f_n
	\end{align*} 
	hence 
	\begin{equation*}
		\bigl[ b(\psi), a^\dagger(\varphi)\bigr] f_1\otimes_\mathrm s \cdots \otimes_\mathrm s f_n
		 = 2 \widehat n(\overline{\psi} \varphi)\, f_1\otimes_\mathrm s \cdots \otimes_\mathrm s f_n 
	\end{equation*}
	and 
	\[
	   \bigl[ k^-(\psi), k^+(\varphi)\bigr] = \bigl[ a(\psi),a^\dagger (\psi)\bigr]+ \bigl[ b(\psi),a^\dagger (\psi)\bigr] = \la \psi,\varphi \ra + 2 \widehat n(\overline{\psi} \varphi) = 2 k^0(\overline{\psi} \varphi). 
	\]
	The other commutation relations, as well as Eq.~\eqref{eq:ccr}, are proven by similar explicit computations that we omit; see also {\'S}niady \cite[Theorem 4]{sniady2000}.
	
	The adjointness relation for the neutral operator is straightforward: $k^0(\varphi)$ is a multiplication operator that multiplies with $\sum_{i=1}^n \varphi(x_i) + \frac12 \int \varphi \dd \alpha$, its adjoint multiplies with the complex conjugate. For the raising and lowering operators, let $\varphi\in \mathcal C$ and $(F_n)$, $(G_n)$ in $\mathcal D$. We have 
	\begin{align*}
		& \frac{1}{(n+1)!}\int \overline{G_{n+1}}\,  (k^+(\varphi) F_n)\, \dd \lambda_{n+1}\\
		& \qquad =  \frac{1}{(n+1)!}\int \overline{G_{n+1}(x_1,\ldots,x_{n+1})} \Bigl( \sum_{i=1}^{n+1}\varphi(x_i) F_n(x_1,\ldots,\widehat x_i,\ldots,x_{n+1})\Bigr) \lambda_{n+1}(\dd \vect x) \\
		&\qquad = \frac{1}{n!} \int \overline{G_{n+1}(x_1,\ldots,x_{n+1})} \varphi(x_{n+1}) F_n(x_1,\ldots,x_n) \lambda_{n+1}(\dd \vect x).  
	\end{align*}  
	We apply Lemma~\ref{lem:lambdan-seq} to $f(x_1,\ldots,x_n,y) = \overline{G_{n+1}(x_1,\ldots,x_n,y)} \varphi(y) F_n(x_1,\ldots,x_n)$. The inner integral on the right of~\eqref{eq:lambdan-seq} becomes 
	\[
		\int \varphi(y) \overline{G_{n+1}(\vect x,y)} F_n(\vect x) \alpha(\dd y) 
		+ \sum_{i=1}^n \varphi(x_i) \overline{G_{n+1}(\vect x,x_i)} F_n(\vect x) 	
	\] 	
	in which we recognize $\overline{k^-(\overline{\varphi}) G_{n+1}} F_n$. Thus, we get
	\[
		\frac{1}{(n+1)!}\int \overline{G_{n+1}}\,  (k^+(\varphi) F_n)\, \dd \lambda_{n+1}
		= \frac{1}{n!}\int \overline{k^-(\overline{\varphi}) G_{n+1}} F_n \, \dd \lambda_n. 
	\] 
	Summation over $n\in \N_0$ yields the required adjointness relation. See also {\'S}niady \cite[Theorem 3]{sniady2000} or Lytvynov \cite[Section 2]{lytvynov2003JFA}.
\end{proof} 

\subsection{Baker-Campbell-Hausdorff formula. Proof of Theorem~\ref{thm:bch}} \label{sec:bch-proof} 

First we check that the product of exponentials $\exp(k^+(v))\exp(k^0(w))\exp(-k^-(v))$ in Theorem~\ref{thm:bch} is well defined on $\mathcal D$. 

\begin{lemma} \label{lem:serconv}
	Let $f \in \mathcal{D}$ and $v,w \in \mathcal C$. Then:
	\begin{enumerate}[(a)]
    	\item $k^-(v)^n f$ vanishes for all except finitely many $n$, hence $\exp( - k^-(v))f\in \mathcal D$.
       	\item $\exp(k^0(w))f$ is in $\mathcal{D}$. 
    	\item If $\sup |v|<1$, then  $\sum_{n=0}^\infty \frac{1}{n!} \bra{k^+(v)}^n f$ converges absolutely in norm. 
\end{enumerate}
\end{lemma}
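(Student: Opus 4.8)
Every element of $\mathcal D$ is a finite linear combination of $\Psi$ and symmetrized tensor products $f_1\otimes_{\mathrm s}\cdots\otimes_{\mathrm s}f_m$ with $f_i\in\mathcal C$, so by the triangle inequality it suffices to treat each such summand separately. Thus I may assume throughout that $f=f_1\otimes_{\mathrm s}\cdots\otimes_{\mathrm s}f_m$ is a single symmetrized tensor product lying in the level-$m$ sector, the vacuum being the case $m=0$. For \emph{part (a)}, the formula for $k^-(\varphi)$ shows that the $n$-th component of $k^-(v)f$ depends only on the $(n+1)$-st component of $f$; hence $k^-(v)$ lowers the Fock degree by one. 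Since $f$ is supported in finitely many levels, say in levels $\le N$, the vector $k^-(v)^{N+1}f$ vanishes, so $\exp(-k^-(v))f=\sum_{j\ge0}\frac{(-1)^j}{j!}k^-(v)^jf$ terminates; each term lies in $\mathcal D$ because the operators preserve $\mathcal D$ (Theorem~\ref{thm:fock-rep}), whence $\exp(-k^-(v))f\in\mathcal D$.

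\textbf{Part (b).} The operator $k^0(w)$ preserves each level and acts there as multiplication by the bounded function $(x_1,\dots,x_n)\mapsto\sum_{i=1}^nw(x_i)+\tfrac12\int w\,\dd\alpha$. On the finitely many levels occupied by $f$ this is a bounded multiplication operator, so its exponential converges in norm and multiplies the level-$n$ component by $\e^{\frac12\int w\,\dd\alpha}\prod_{i=1}^n\e^{w(x_i)}$. As $\prod_i\e^{w(x_i)}$ is symmetric, it can be absorbed into the tensor slots, giving
\[
\exp\bigl(k^0(w)\bigr)\bigl(f_1\otimes_{\mathrm s}\cdots\otimes_{\mathrm s}f_m\bigr)=\e^{\frac12\int w\,\dd\alpha}\,(\e^wf_1)\otimes_{\mathrm s}\cdots\otimes_{\mathrm s}(\e^wf_m).
\]
Since $w$ is bounded, $\e^w$ is bounded and $\e^wf_i$ has the same finite-measure support as $f_i$, so $\e^wf_i\in\mathcal C$ and the right-hand side lies in $\mathcal D$ (the vacuum case reads $\exp(k^0(w))\Psi=\e^{\frac12\int w\,\dd\alpha}\Psi$).

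\textbf{Part (c).} From $k^+(v)=a^\dagger(v)$ and $a^\dagger(v)(g_1\otimes_{\mathrm s}\cdots\otimes_{\mathrm s}g_k)=(k+1)\,v\otimes_{\mathrm s}g_1\otimes_{\mathrm s}\cdots\otimes_{\mathrm s}g_k$, an induction yields
\[
\frac1{n!}\bigl(k^+(v)\bigr)^nf=\binom{m+n}{n}\,\underbrace{v\otimes_{\mathrm s}\cdots\otimes_{\mathrm s}v}_{n}\otimes_{\mathrm s}f_1\otimes_{\mathrm s}\cdots\otimes_{\mathrm s}f_m=:T_n,
\]
an element of the level-$(m+n)$ sector. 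As distinct levels are orthogonal, it is enough to bound $\sum_n\norm{T_n}$, which I do in two moves. \emph{First}, for any $w\in\mathcal C$ with $\sup|w|<1$ the series $\sum_nT_n^{(w)}$ (the analogue of $T_n$ with $v$ replaced by $w$) converges in $\mathfrak F$, i.e.\ $\sum_n\norm{T_n^{(w)}}^2<\infty$. This is read off as a Taylor coefficient: by~\eqref{eq:exp-scalar} and $\sup|w|<1$ the function
\[
g(s,t)=\bigl\langle\mathcal E_{w+\sum_is_if_i},\mathcal E_{w+\sum_jt_jf_j}\bigr\rangle=\exp\Bigl(-\int\log\bigl(1-\overline{(w+\textstyle\sum_is_if_i)}\,(w+\textstyle\sum_jt_jf_j)\bigr)\,\dd\alpha\Bigr)
\]
is jointly holomorphic in $(\overline s,t)$ near the origin, and the elementary multilinear expansion of $(w+\sum_jt_jf_j)^{\otimes_{\mathrm s}(m+n)}$ identifies the coefficient of $\overline{s_1}\cdots\overline{s_m}\,t_1\cdots t_m$ in $g$ with $(m!)^2\sum_n\norm{T_n^{(w)}}^2$; holomorphy makes it finite. \emph{Second}, I upgrade square-summability to absolute summability: choose $r>1$ with $r\sup|v|<1$, apply the first move to $w=rv$, and use $T_n^{(rv)}=r^nT_n$ to get $\sum_nr^{2n}\norm{T_n}^2<\infty$, hence $\norm{T_n}\le C\,r^{-n}$ and $\sum_n\norm{T_n}\le C\sum_nr^{-n}<\infty$. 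This gives the claim for a single symmetrized tensor product, and the general case follows by linearity. (For $m=0$ one has $\sum_nv^{\otimes_{\mathrm s}n}=\mathcal E_v$ with $\norm{\mathcal E_v}^2=\exp(-\int\log(1-|v|^2)\,\dd\alpha)<\infty$, so no derivatives are needed.)

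\textbf{Main obstacle.} The crux is the first move in (c): controlling $\norm{v^{\otimes_{\mathrm s}n}\otimes_{\mathrm s}f_1\otimes_{\mathrm s}\cdots\otimes_{\mathrm s}f_m}$ against the factorial prefactor $\binom{m+n}{n}$ and the $(m+n)!$ permutations built into $\lambda_{m+n}$. The generating-function route avoids a direct cycle computation by importing the bound from the closed form~\eqref{eq:exp-scalar}, and the decisive role of the hypothesis $\sup|v|<1$ is precisely that it leaves slack to rescale $v\mapsto rv$ with $r>1$, converting norm convergence into absolute convergence. A direct alternative is to expand $\norm{v^{\otimes_{\mathrm s}n}\otimes_{\mathrm s}f_1\otimes_{\mathrm s}\cdots\otimes_{\mathrm s}f_m}^2$ over the cycle structure of $\lambda_{m+n}$, bound a pure-$v$ cycle of length $\ell$ by $(\sup|v|^2)^{\ell-1}\int|v|^2\,\dd\alpha$ and the at most $2m$ cycles meeting some $f_i$ crudely, and sum using $\sum_{\sigma\in\mathfrak S_N}x^{|\sigma|}=(x)_N$; this also works but is more laborious.
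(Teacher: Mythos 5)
Your proof is correct, and parts (a) and (b) coincide with the paper's argument (the paper phrases $f$ as $k^+(\varphi_1)\cdots k^+(\varphi_m)\Psi$, which is the same reduction to a single symmetrized tensor product). Part (c) is where you genuinely diverge. The paper's route is the ``direct alternative'' you sketch at the end, in an even cruder form: it bounds the level-$(n+m)$ component pointwise by $(n+m)!\,c^n C^m \1_{B^{n+m}}$ (with $|v|\le c\1_B$, $|\varphi_j|\le C\1_B$, $c<1$), uses $\lambda_{n+m}(B^{n+m})=(\alpha(B))_{n+m}$, and checks that $\sum_n \frac{c^n}{n!}\bigl((n+m)!\,(\alpha(B))_{n+m}\bigr)^{1/2}$ converges because the factorial growth of $\lambda_{n+m}$ is exactly cancelled by the $1/(n+m)!$ in the Fock norm, leaving a geometric-times-polynomial series. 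Your route instead imports the closed form \eqref{eq:exp-scalar} for $\la \mathcal E_u,\mathcal E_v\ra$, extracts $\sum_n\norm{T_n^{(w)}}^2$ as a Taylor coefficient, and then uses the rescaling $v\mapsto rv$ to upgrade square-summability to $\norm{T_n}\le C r^{-n}$ and hence absolute summability; this avoids all cycle/permutation counting and the $\ell^2\to\ell^1$ trick is a nice reusable device, at the price of relying on \eqref{eq:exp-scalar} (which the paper states without proof) and of one step you should not elide: identifying the Taylor coefficient of the closed-form $g$ with $(m!)^2\sum_n\norm{T_n^{(w)}}^2$ requires interchanging the coefficient extraction with the sum over Fock levels. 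That interchange is legitimate --- the partial sums over levels are polynomials in $(\overline{s},t)$ converging locally uniformly (dominated by $\norm{\mathcal E_{u_s}}\,\norm{\mathcal E_{u_t}}$) so Weierstrass' theorem applies, or one can argue by positivity of the relevant coefficients --- but ``holomorphy makes it finite'' is doing more work than it admits, so add a sentence. With that, the proof stands; overall the paper's estimate is shorter, yours is more conceptual and makes transparent why the hypothesis $\sup|v|<1$ leaves room to spare.
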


The condition $\sup|v|<\infty$ is satisfied for $|v| = \tanh |\varphi|$ as $\varphi\in \mathcal C$ is bounded. 

\begin{proof}
It is enough to treat functions $f = k^+(\varphi_1) \cdots k^+(\varphi_m) \Psi$ with $\varphi_1, \ldots, \varphi_m \in \mathcal{C}$, $m \in \N_0$. Item (a) of the lemma follows from $\bra{k^-(\varphi)}^n f = 0$ for $n > m$. For item (b), notice that 
\begin{equation}
    \label{equation: exp k0 k+...k+}
   \exp\bigl(k^0(w)\bigr) k^+(\varphi_1) \cdots k^+(\varphi_m) \Psi  = \exp\Bigl(\frac{1}{2} \int w \dx \alpha\Bigr)  k^+(\varphi_1 e^{w}) \cdots k^+(\varphi_m e^{w}) \Psi \in \mathcal{D}. 
\end{equation}
For item (c), let $C > 0$, $0 < c < 1$ and $B \in \mathcal{X}$ be such that $\alpha(B) < \infty$ and $\abs{\varphi_j} \leq C \one_B$ for $j \in \set{1, \ldots, m}$ and $\abs{v} < c \one_B$. Then
\begin{multline*}
	\bigl( k^+(v)^n f\bigr)_\ell(x_1,\ldots, x_\ell) \\
	 = \delta_{n+m,\ell}\, \sum_{\sigma \in \mathfrak S_{n+m}} v(x_1)\cdots v(x_n) \varphi_{\sigma(n+1)}(x_{n+1})\cdots \varphi_{\sigma(n+m)}(x_{n+m}). 
\end{multline*}
Thus, 
\[
	\norm{k^+(v)^n f}^2 \leq \frac{1}{(n+m)!} \Bigl( (n+m)! c^n C^m \Bigr)^2 \lambda_{n+m}(B)
\] 
and 
\begin{equation*}
    \sum_{n = 0}^\infty \frac{1}{n!} \norm{(k^+(v))^n f}
    \leq  C^m \sum_{n = 0}^\infty \frac{c^n}{n!} \Bigl( (n+m)! (\alpha(B))_{(n+m)}\Bigr)^{1/2} <\infty. \qedhere
\end{equation*}
\end{proof}

For the proof of Theorem~\ref{thm:bch} we adapt Truax \cite{TR1985}, with extra care for unbounded operators. For $t \geq 0$, set 
\begin{align*}
    v_t := \one_{\{\varphi \neq 0\}} \frac{\varphi}{\abs{\varphi}} \tanh{t \abs{\varphi}}, \quad w_t := -2\log \cosh{t \abs{\varphi}}
\end{align*}
and define 
\begin{align*}
    S_t f :=  \exp\bigl( k^+(v_t)\bigr) \exp\bigl(k^0(w_t)\bigr) \exp\bigl( - k^-(v_t)\bigr) f, \qquad f \in \mathcal{D}.
\end{align*} 
The operator $S_t$ is well-defined on $\mathcal D$ by Lemma~\ref{lem:serconv}.
We show that $t\mapsto S_t f$ is norm-differentiable and solves the same differential equation as $\exp(t (k^+(\varphi) - k^-(\varphi)) f$. 

\begin{lemma}
	\label{lemma: product formula}
	The map $t \mapsto S_t f$ is norm-differentiable on $\R_+$, for every $f \in \mathcal{D}$, with derivative 
	\begin{multline}\label{eq: lemma 4}
  		\partial_t S_t f = \Big ( \e^{k^+(v_{t})}  k^+(\partial_t v_t) \e^{k^0(w_{t})} \e^{-k^-(v_{t})} 
		  + \e^{k^+(v_{t})} k^0(\partial_t w_t) \e^{k^0(w_{t})} \e^{-k^-(v_{t})} \\
		 - \e^{k^+(v_{t})} \e^{k^0(w_{t})} k^-(\partial_t v_t) \e^{-k^-(v_{t})} \Big ) f.
	\end{multline}
\end{lemma}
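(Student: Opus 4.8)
The plan is to differentiate the three exponential factors one at a time and recombine them by the product rule, in the spirit of Truax \cite{TR1985} but tracking domains. The key structural fact is that each of the families $\set{k^+(v_t)}_t$, $\set{k^0(w_t)}_t$, $\set{k^-(v_t)}_t$ is commuting, so that term-by-term differentiation will produce the clean form $\partial_t \e^{k^+(v_t)} = \e^{k^+(v_t)} k^+(\partial_t v_t)$, and analogously for the other two factors, with $k^+(\partial_t v_t)$ commuting with $k^+(v_t)$. Since every operator acts linearly on the vector and $\mathcal D$ is spanned by such products (Lemma~\ref{lem:d-dense}), it suffices by linearity to treat a single generator $f = k^+(\varphi_1)\cdots k^+(\varphi_m)\Psi$ with $\varphi_j \in \mathcal C$. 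Throughout I use that $t \mapsto v_t$ and $t\mapsto w_t$ are smooth maps into $\mathcal C$ (with $\sup$-norm) with bounded derivatives $\partial_t v_t = \varphi/\cosh^2(t\abs{\varphi})$ and $\partial_t w_t = -2\abs{\varphi}\tanh(t\abs{\varphi})$.

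First I would dispose of the two inner factors, which are analytically harmless because they are \emph{finite} sums. By Lemma~\ref{lem:serconv}(a) the series for $\e^{-k^-(v_t)} f$ terminates at $n=m$, and for each fixed $n$ the vector $k^-(v_t)^n f$ is a fixed finite combination of symmetrized tensor products whose coefficients depend smoothly on $t$; hence $t \mapsto a_t := \e^{-k^-(v_t)} f$ is norm-differentiable, and the commutativity $[k^-(\psi),k^-(\chi)]=0$ collapses the product rule to $\partial_t a_t = -k^-(\partial_t v_t)\,\e^{-k^-(v_t)} f$. Next, by the explicit action~\eqref{equation: exp k0 k+...k+}, the multiplication operator $\e^{k^0(w_t)}$ sends $a_t$ to another finite $\mathcal D$-combination with smooth coefficients, so $t\mapsto b_t := \e^{k^0(w_t)} a_t$ is norm-differentiable with $\partial_t b_t = \e^{k^0(w_t)} k^0(\partial_t w_t) a_t + \e^{k^0(w_t)}\partial_t a_t$.

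The remaining, and main, step is the outer factor $S_t f = \e^{k^+(v_t)} b_t$, where $\e^{k^+(v_t)} = \sum_{n\ge 0} \frac1{n!} k^+(v_t)^n$ is a genuinely infinite series of unbounded operators and $b_t$ is itself $t$-dependent. I would fix a compact interval $[0,T]$, on which $\sup\abs{v_t}\le c<1$ and $\sup\abs{\partial_t v_t}<\infty$, and note that $b_t$ has components only in Fock degrees $\le m$, with $L^2$-norms bounded uniformly in $t\in[0,T]$. Reading off $(k^+(v_t)^n b_t)_\ell$ exactly as in the proof of Lemma~\ref{lem:serconv}(c) then yields a bound of the shape $\norm{k^+(v_t)^n b_t}\le C\,c^n\,\bra{(n+m)!\,(\alpha(B))_{n+m}}^{1/2}$, uniform in $t\in[0,T]$, and the prefactor $c^n/n!$ makes both the series and its formal termwise $t$-derivative absolutely and uniformly convergent. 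This majorant licenses differentiation under the summation sign and the product rule $\partial_t(\e^{k^+(v_t)} b_t) = \e^{k^+(v_t)} k^+(\partial_t v_t) b_t + \e^{k^+(v_t)}\partial_t b_t$, again using $[k^+(v_t),k^+(\partial_t v_t)]=0$ to write the derivative of the series as $\e^{k^+(v_t)} k^+(\partial_t v_t)$.

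Finally, substituting $\partial_t b_t = \e^{k^0(w_t)} k^0(\partial_t w_t) a_t + \e^{k^0(w_t)}\partial_t a_t$ and $\partial_t a_t = -k^-(\partial_t v_t)\e^{-k^-(v_t)} f$, and recalling $a_t = \e^{-k^-(v_t)}f$, recovers precisely the three summands of~\eqref{eq: lemma 4}. The main obstacle is the outer factor: unlike the two inner factors, it is an infinite series of unbounded operators applied to a moving vector $b_t$, so the delicate point is to dominate not merely the series itself but the whole family of difference quotients on $[0,T]$, so that the interchange of the difference-quotient limit with the infinite sum is genuinely justified rather than merely formal.
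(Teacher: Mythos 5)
Your argument is correct, and it supplies exactly the details the paper deliberately omits: the paper states only that "the proof of the lemma is based on straightforward but somewhat tedious bounds," and your proof consists of precisely those bounds — finite termination of the two inner exponentials, smoothness of $t\mapsto v_t, w_t$ in $\mathcal C$, and a $t$-uniform summable majorant (from the Lemma~\ref{lem:serconv}(c) estimate) for the outer series and its termwise derivative, which justifies interchanging the difference-quotient limit with the infinite sum. Your correct identification of the outer factor $\e^{k^+(v_t)}$ acting on the moving vector $b_t$ as the only genuinely delicate point, and your use of the commutativity of $k^+(v_t)$ with $k^+(\partial_t v_t)$ (and likewise for $k^-$ and $k^0$) to collapse the product rule to the stated three-term form, match the intended proof.
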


Note that $\partial_t v_t$, $\partial_t w_t \in \mathcal{C}$. The proof of the lemma is based on straightforward but somewhat tedious bounds and it is omitted.

The following commutation relations help us express the right side of \eqref{eq: lemma 4} with $k^+(\varphi)$ and $k^-(\varphi)$ only.
We remark that $k^+(\theta)$, $k^0(\theta)$ and $k^-(\theta)$ are closable because they have densely defined adjoints. We use the same letters for the closed operators. 

\begin{lemma} \label{lem:more-commutators}
	For all $v,w,\theta\in \mathcal C$ with $\sup|v|<\infty$, and all $f\in \mathcal D$, the following holds true:
	\begin{enumerate}[(a)]
		\item $	[k^0(\theta), \exp(k^+(v))] f= \exp(k^+(v)) k^+(\theta v)f$.
		\item $\exp(k^+(v))f$ is in the domain of the closure of $k^-(\theta)$ and 
		 \begin{equation*}
		 	\bigl[k^-(\theta), \e^{k^+(v)} \bigr]f = \bigl(- k^+(\overline{\theta}v^2 ) + 2 k^0(\overline{\theta}v )\bigr) \e^{k^+(v)}  f.
	 \end{equation*} 
		\item 	$\e^{k^0(w)} k^-(\theta) f = k^-(\theta\e^{-\overline{w}}) \e^{k^0(w)} f$.
	\end{enumerate} 
\end{lemma}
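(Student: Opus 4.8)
The plan is to reduce all three identities to \emph{finite}, terminating commutator computations on the invariant domain $\mathcal D$, obtained by iterating the defining relations~\eqref{eq:commutation-relations-k}, and then to pass to the exponentials through partial sums whose convergence is supplied by Lemma~\ref{lem:serconv}. The basic building blocks are the two closed identities
\begin{equation*}
	[k^0(\theta),k^+(v)^n] = n\, k^+(v)^{n-1}k^+(\theta v),\qquad
	[k^-(\theta),k^+(v)^n] = 2n\, k^+(v)^{n-1}k^0(\overline{\theta} v)+2\binom{n}{2}k^+(v)^{n-2}k^+(\overline{\theta} v^2),
\end{equation*}
both valid on $\mathcal D$ and proved by induction on $n$ via the Leibniz rule $[X,YZ]=[X,Y]Z+Y[X,Z]$, the relations~\eqref{eq:commutation-relations-k}, and the commutativity of raising operators. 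The second identity terminates because $[k^-(\theta),k^+(v)]=2k^0(\overline{\theta} v)$, then $[k^0(\overline{\theta} v),k^+(v)]=k^+(\overline{\theta} v^2)$, and finally $[k^+(\overline{\theta} v^2),k^+(v)]=0$, so no third-order term survives.

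For part (a) I would divide the first identity by $n!$ and sum over $n$: the right side telescopes to $\exp(k^+(v))k^+(\theta v)f$, which converges by Lemma~\ref{lem:serconv}(c) since $k^+(\theta v)f\in\mathcal D$. Applying $k^0(\theta)$ to the partial sums $S_N=\sum_{n\le N}\tfrac{1}{n!}k^+(v)^nf\in\mathcal D$ and using that the neutral operator multiplies level $\ell$ by a factor growing only linearly in $\ell$, this extra polynomial factor is absorbed by the factorial decay in Lemma~\ref{lem:serconv}; hence $k^0(\theta)S_N$ converges. Since $S_N\to\exp(k^+(v))f$, this simultaneously shows $\exp(k^+(v))f\in\mathrm{dom}(\overline{k^0(\theta)})$ and yields (a). For part (c) I would first note that $k^0(w)$ is a multiplication operator, so by~\eqref{equation: exp k0 k+...k+} the operator $\exp(k^0(w))$ maps $\mathcal D$ into $\mathcal D$ and everything stays in $\mathcal D$ with no closures required. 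The relation then follows by a direct computation from the explicit formula for $k^-(\theta)$: shifting a variable $y$ into the configuration produces a factor $\exp(w(y))$ that exactly cancels the conjugated multiplier $\exp(-w(y))$ carried by the argument $\theta e^{-\overline{w}}$ of $k^-$. Equivalently, one may sum the convergent series $\mathrm{ad}_{k^0(w)}^n k^-(\theta)=(-1)^n k^-(\overline{w}^{\,n}\theta)$ to $k^-(e^{-\overline{w}}\theta)$.

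Part (b) is the main obstacle and combines the closure bookkeeping with a reordering. Dividing the second finite identity by $n!$ and summing, the three resulting series are $\exp(k^+(v))k^-(\theta)f$, $2\exp(k^+(v))k^0(\overline{\theta} v)f$ and $\exp(k^+(v))k^+(\overline{\theta} v^2)f$; the polynomial prefactors $n$ and $\binom{n}{2}$ are again harmless against the factorial decay of Lemma~\ref{lem:serconv}, once one checks that its estimates apply verbatim to $k^+(v)^{n-1}$ and $k^+(v)^{n-2}$ acting on the fixed vectors $k^0(\overline{\theta} v)f$ and $k^+(\overline{\theta} v^2)f$. Thus $k^-(\theta)S_N$ converges while $S_N\to\exp(k^+(v))f$, which establishes $\exp(k^+(v))f\in\mathrm{dom}(\overline{k^-(\theta)})$ and gives
\begin{equation*}
	[k^-(\theta),\exp(k^+(v))]f = \exp(k^+(v))\bigl(2k^0(\overline{\theta} v)+k^+(\overline{\theta} v^2)\bigr)f.
\end{equation*}

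It then remains to move the operators to the left of $\exp(k^+(v))$. Since raising operators commute, $\exp(k^+(v))k^+(\overline{\theta} v^2)f=k^+(\overline{\theta} v^2)\exp(k^+(v))f$, while part (a), rearranged, gives $\exp(k^+(v))k^0(\overline{\theta} v)f=k^0(\overline{\theta} v)\exp(k^+(v))f-k^+(\overline{\theta} v^2)\exp(k^+(v))f$. Substituting, the two $k^+(\overline{\theta} v^2)$ contributions combine into $-k^+(\overline{\theta} v^2)\exp(k^+(v))f$, producing exactly $(-k^+(\overline{\theta} v^2)+2k^0(\overline{\theta} v))\exp(k^+(v))f$, as claimed. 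I expect the genuinely delicate points to be purely operator-theoretic bookkeeping: justifying that the termwise application of the unbounded operators $k^0(\theta)$ and $k^-(\theta)$ commutes with the infinite summation, and that the partial sums witness membership in the relevant operator closures; the underlying norm convergence is inexpensive thanks to the factorial weights built into Lemma~\ref{lem:serconv}.
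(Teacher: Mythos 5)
Your proposal is correct and, for parts (a) and (b), follows essentially the same route as the paper: the same two finite commutator identities $[k^0(\theta),k^+(v)^n]=n\,k^+(\theta v)k^+(v)^{n-1}$ and $[k^-(\theta),k^+(v)^n]=n(n-1)k^+(\overline{\theta}v^2)k^+(v)^{n-2}+2n\,k^+(v)^{n-1}k^0(\overline{\theta}v)$ proved by induction, summation of the partial sums with convergence supplied by Lemma~\ref{lem:serconv} to get domain membership for the closure, and the same final rearrangement via (a) and the commutativity of raising operators to convert $\e^{k^+(v)}(k^+(\overline{\theta}v^2)+2k^0(\overline{\theta}v))$ into $(-k^+(\overline{\theta}v^2)+2k^0(\overline{\theta}v))\e^{k^+(v)}$. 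The only genuine divergence is part (c): the paper verifies the identity algebraically on monomials $k^+(\varphi_1)\cdots k^+(\varphi_m)\Psi$ using Eq.~\eqref{equation: exp k0 k+...k+}, whereas you compute directly with the explicit kernel of $k^-$ on the extended Fock space (or sum the adjoint action $\mathrm{ad}_{k^0(w)}^n k^-(\theta)=(-1)^n k^-(\overline{w}^{\,n}\theta)$, taking care that $k^-$ is antilinear in its argument); both are elementary and stay inside $\mathcal D$ since $\e^{k^0(w)}$ preserves $\mathcal D$, so nothing is lost or gained beyond taste.
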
 

\begin{proof}
	An induction over $n$, based on $[A, B^{n+1}] = [A, B] B^n + B [A, B^n]$ and the commutation relations for $k^0(\theta)$ and $k^+(v)$, yields 
	\[
		[k^0(\theta), k^+(v)^n] = n k^+(\theta v) k^+(v)^{n-1}
	\] 
	from which one deduces 
	\begin{equation} \label{eq:kzero-expkplus}
		[k^0(\theta), \exp(k^+(v))] = \exp(k^+(v)) k^+(\theta v). 
	\end{equation}
	Next we check by induction over $n$ that 
	\begin{equation} 
	\label{eq:kminus-kplus-ncommutator}
  	  [ k^-(\theta), k^+(v)^n] =  n(n - 1) k^+( \overline{\theta}v^2 ) k^+(v)^{n -2}  + 2n k^+(v)^{n-1} k^0(\overline{\theta} v). 
	\end{equation}
	For $n=1$ the previous relation reads $[k^-(\theta), k^+(v)] = 2 k^0(\overline{\theta} v)$, which holds true. For the induction step from $n$ to $n+1$, we write
	\begin{align*}
		[ k^-(\theta), k^+(v)^{n+1}] & = \bigl[k^-(\theta), k^+(v)\bigr] k^+(v)^n + k^+(v) \bigl[k^0(\theta), k^+(v)^n\bigr]\\
				& = 2 \Bigl[k^0(\overline{\theta} v), k^+(v)^n\Bigr] + 2 k^+(v)^{n} k_0(\overline{\theta} v) 
					+ k^+(v) \Bigl[k^0(\theta), k^+(v)^n\Bigr] \\
				& = 2n k^+(\overline{\theta} v^2) k^+(v)^{n-1} + 2 k^+(v)^{n} k_0(\overline{\theta} v) 
					+ k^+(v) \Bigl[k^0(\theta), k^+(v)^n\Bigr] \\
				& = n(n+1) k^+((\overline{\theta} v^2) k^+(v)^{n-1} +2(n+1) k^+(v)^{n} k^0(v \overline{\theta}).
	\end{align*}
	The inductive proof of~\eqref{eq:kminus-kplus-ncommutator} is complete. Now let $f\in \mathcal D$ and $N\in \N$. Eq.~\eqref{eq:kminus-kplus-ncommutator} yields 
	\begin{multline*}
		 k^-(\theta) \sum_{n=0}^N \frac{1}{n!} k^+(v)^n f \\
		 = \Bigl(\sum_{n=0}^N \frac{1}{n!} k^+(v)^n \Bigr) k^-(\theta) f + \Bigl(\sum_{n=0}^{N-2} \frac{1}{n!} k^+(v)^n \Bigr) k^+(\overline{\theta}v^2 ) f  
		 + 2 \Bigl(\sum_{n=0}^{N-1} \frac{1}{n!} k^+( v)^n \Bigr) k^0(\overline{\theta}v ) f.
	\end{multline*}
	The right side converges in norm as $N\to \infty$ by Lemma~\ref{lem:serconv}, hence so does the left side. It follows that $\exp(k^+(v)) f$ is in the domain of the closure of $k^-(\theta)$ and we obtain 
	 \begin{equation*}
	 	\bigl[k^-(\theta), \e^{k^+(v)} \bigr]f = \e^{k^+(v)} \bigl( k^+(\overline{\theta}v^2 ) + 2 k^0(\overline{\theta}v )\bigr) f.
	 \end{equation*} 
	In view of~\eqref{eq:kzero-expkplus}, we may also write
	 \begin{equation*}
	 	\bigl[k^-(\theta), \e^{k^+(v)} \bigr]f = \bigl(- k^+(\overline{\theta}v^2 ) + 2 k^0(\overline{\theta}v )\bigr) \e^{k^+(v)}  f.
	 \end{equation*} 
	For (c) it  is enough to consider functions $f = k^+(\varphi_1)\cdots k^+(\varphi_m) \Psi$ for $m\in \N$, $\varphi_1,\ldots, \varphi_m\in \mathcal C$. We write down the proof for $m=2$, the general case is similar. We have 
	\begin{align*}
		k^-(\theta) f & = k^-(\theta) k^+(\varphi_1) k^+(\varphi_2)  \Psi \\
		 & = \langle \theta, \varphi_1\rangle k^+(\varphi_2)\Psi + \langle \theta, \varphi_2\rangle k^+(\varphi_1)\Psi + 2 k^+(\varphi_1 \overline{\theta} \varphi_2) \Psi. 
	\end{align*}
	By~\eqref{equation: exp k0 k+...k+}, 
	\[
		\e^{k^0(w)} k^-(\theta) f = \e^{\int w\dd\alpha /2} \Bigl(\langle \theta, \varphi_1\rangle k^+(\varphi_2 \e^w)\Psi + \langle \theta, \varphi_2\rangle k^+(\varphi_1 \e^w)\Psi + 2 k^+(\varphi_1 \overline{\theta}  \varphi_2 \e^w) \Psi \Bigr). 
	\] 
	Similarly, 
	\begin{multline*}
		 k^-(\psi) \e^{k^0(w)} f \\
		  = \e^{\int w\dd\alpha /2} \Bigl(\langle \psi, \e^w \varphi_1\rangle k^+(\varphi_2 \e^w)\Psi + \langle \psi, \e^w \varphi_2\rangle k^+(\varphi_1 \e^w)\Psi + 2 k^+(\varphi_1\e^w  \overline{\psi} \varphi_2 \e^w) \Psi \Bigr).
	\end{multline*}
	The expressions are equal for $\psi = \exp(-\overline{w}) \theta$. 
\end{proof}

\begin{lemma}
	\label{lem:differential equation}
	Let $f \in \mathcal{D}$. Then, $S_t f$ is in the domain of the closure of $k^+(\varphi) -  k^-(\varphi)$, denoted by $k^+(\varphi) -  k^-(\varphi)$ as well. Moreover, 
	\begin{equation*}
    	\partial_t S_t f = \bra{k^+(\varphi) -  k^-(\varphi)} S_t f.
	\end{equation*}
\end{lemma}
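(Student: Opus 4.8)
The plan is to start from the derivative formula~\eqref{eq: lemma 4} of Lemma~\ref{lemma: product formula} and to push every operator occurring there to the left of $S_t f$, using the commutation relations of Lemma~\ref{lem:more-commutators}; the three resulting terms should then recombine into $\bigl(k^+(\varphi) - k^-(\varphi)\bigr) S_t f$. Throughout, $v_t = \one_{\{\varphi\neq 0\}} \frac{\varphi}{|\varphi|}\tanh(t|\varphi|)$ and $w_t = -2\log\cosh(t|\varphi|)$.

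First I would record three elementary scalar identities for the derivatives. Differentiating gives $\partial_t v_t = \varphi/\cosh^2(t|\varphi|)$ and $\partial_t w_t = -2|\varphi|\tanh(t|\varphi|)$. From these I extract: (i) $\partial_t v_t\, \e^{-w_t} = \partial_t v_t \cosh^2(t|\varphi|) = \varphi$; (ii) $\partial_t w_t = -2\overline{\varphi}\, v_t$; and (iii) the ``reality'' identity $\overline{\varphi}\, v_t = \varphi\, \overline{v_t}$, since both sides equal $|\varphi|\tanh(t|\varphi|)$. Identity (iii) is what will tame the complex phases coming from the antilinearity of $k^-$.

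Next I would treat the three summands of~\eqref{eq: lemma 4} one at a time. The first term $\e^{k^+(v_t)} k^+(\partial_t v_t) \cdots$ needs no work, as raising operators commute, yielding $k^+(\partial_t v_t) S_t f$. For the second term I pull $k^0(\partial_t w_t)$ through $\e^{k^+(v_t)}$ by Lemma~\ref{lem:more-commutators}(a), producing $k^0(\partial_t w_t) S_t f - k^+(\partial_t w_t\, v_t) S_t f$. The third term is the crux: I first use Lemma~\ref{lem:more-commutators}(c) to move $k^-(\partial_t v_t)$ past $\e^{k^0(w_t)}$, where the weight $\e^{-\overline{w_t}} = \cosh^2(t|\varphi|)$ converts its argument into $\varphi$ by identity (i); then Lemma~\ref{lem:more-commutators}(b) moves $k^-(\varphi)$ past $\e^{k^+(v_t)}$, giving $-k^-(\varphi) S_t f - k^+(\overline{\varphi}\, v_t^2) S_t f + 2 k^0(\overline{\varphi}\, v_t) S_t f$.

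Finally I would collect coefficients. By (ii) the neutral contributions $k^0(\partial_t w_t) + 2 k^0(\overline{\varphi}\, v_t)$ cancel, and the surviving $k^+$ arguments combine as $\partial_t v_t - \partial_t w_t\, v_t - \overline{\varphi}\, v_t^2 = \varphi - \varphi|v_t|^2 + \overline{\varphi}\, v_t^2 = \varphi$, the last equality being precisely identity (iii). What remains is $k^+(\varphi) S_t f - k^-(\varphi) S_t f$, as claimed. The domain assertion comes for free from the same rearrangements: writing $S_t f = \e^{k^+(v_t)} g$ with $g = \e^{k^0(w_t)} \e^{-k^-(v_t)} f \in \mathcal D$ (Lemma~\ref{lem:serconv}(a),(b)), Lemma~\ref{lem:more-commutators}(b) places $S_t f$ in the domain of the closure of $k^-(\varphi)$, while an analogous truncated-series argument (Lemma~\ref{lem:serconv}(c)) handles $k^+(\varphi)$. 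I expect the main obstacle to be bookkeeping rather than conceptual: keeping the conjugations straight so that the phases cancel in (iii), and justifying each step with the unbounded $k^-(\varphi)$ on the truncated exponential series before passing to the limit, as licensed by Lemma~\ref{lem:serconv} and the domain-of-closure statements of Lemma~\ref{lem:more-commutators}.
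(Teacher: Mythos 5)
Your proposal is correct and follows essentially the same route as the paper: start from the derivative formula of Lemma~\ref{lemma: product formula}, commute each factor to the left of $S_t f$ via Lemma~\ref{lem:more-commutators}(a)--(c), and verify the resulting scalar identities for $\partial_t v_t$ and $\partial_t w_t$ (which the paper merely records as the three ``differential equations'' at the end, and which you check explicitly, including the reality identity $\overline{\varphi}\,v_t = \varphi\,\overline{v_t}$ that makes the $k^+$ arguments collapse to $\varphi$). The domain assertion is likewise handled as in the paper, via the norm convergence of the truncated series established in Lemmas~\ref{lem:serconv} and~\ref{lem:more-commutators}(b).
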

	
\begin{proof} 
	In Lemma~\ref{lem:more-commutators}(b) we proved that $\exp(k^+(v)) f$ is in the domain of the closure of $k^-(\theta)$ by proving that $k^-(\theta) \sum_{n=0}^N n!^{-1} k^+(v)^n f$ converges in norm as $N\to \infty$. The same argument shows that $\exp(k^+(v)) f$ also belongs to the domain of the closure of $k^+(\varphi) - k^-(\varphi)$.
As $\exp( k^0(w_t))$ and $\exp(- k ^-(v_t))$ map $\mathcal D$ to $\mathcal D$ by Lemma~\ref{lem:serconv}, it follows that $S_t f$ is in th domain of the closure, for all $f\in \mathcal D$. 	
	By Lemma~\ref{lemma: product formula}, the derivative $\partial_t S_t f$ is a sum of three terms. The first is $k^+(\partial_t v_t) S_t f$. The second term is
	\[
		  \e^{k^+(v_{t})} k^0(\partial_t w_t) \e^{k^0(w_{t})} \e^{-k^-(v_{t})} f =\bigl(k^0(\partial_t w_t) - k^+(v_t \partial_t w_t) \bigr) S_t f,
	\] 
	where we have used Lemma~\ref{lem:more-commutators}(a).
	The third term is 
	\begin{align*}
		&- \e^{k^+(v_{t})} \e^{k^0(w_{t})} k^-(\partial_t v_t) \e^{-k^-(v_{t})}f\\
				& \qquad = - \e^{k^+(v_t)} k^-(\e^{-w_t} \partial_t v_t) \e^{k^0(w_{t})}\e^{-k^-(v_{t})}f \\
				& \qquad = \Bigl( - k^+(\e^{-w_t} \overline{\partial_t v_t} v_t^2) + 2 k^0(\e^{-w_t} \overline{\partial v_t} vt) - k^{-} (\e^{-w_t} \partial_t v_t)\Bigr) S_t f. 
	\end{align*} 	
	This time we have used Lemma~\ref{lem:more-commutators}(c) and then (b). Altogether, we get 
	\begin{multline*}
		\partial_t S_t f = \Bigl( k^+\bra{\partial_t v_t - (\partial_t w_t) v_t -v_t^2 \overline{(\partial_t v_t)}} \\
			 + k^0\bra{\partial_t w_t + 2v_t \overline{(\partial_t v_t)}} -k^-\bra{(\partial_t v_t) e^{-w_t}}\Bigr) S_t f. 
	\end{multline*}
	The proof is concluded with the differential equations 
\begin{align*}
    \partial_t v_t - (\partial_t w_t) v_t -v_t^2 \overline{(\partial_t v_t)} e^{-w_t} &= \varphi, \\
    \partial_t w_t + 2v_t \overline{(\partial_t v_t)} e^{-w_t} &= 0, \\
    (\partial_t v_t) e^{-w_t} &= \varphi.  \qedhere
\end{align*}
\end{proof}

Now we have all the ingredients to prove the Baker-Campbell-Hausdorff formula.

\begin{proof}[Proof of Theorem~\ref{thm:bch}]
	As noted after Eq.~\eqref{eq:udef}, the closure $A$ of $-\mathrm i (k^+(\varphi)- k^-(\varphi))$ is self-adjoint. By Stone's theorem, for every $f\in \mathcal D$, the function $t\mapsto \exp(\mathrm i t A) f$ is norm-differentiable with derivative 
	$\partial_t \exp(\mathrm i t A) f = \mathrm i A \exp(\mathrm i t A)f$. 
	Hence $g_t := \exp(t \bra{k^+(\varphi)-k^-(\varphi)}) f$ is norm-differentiable with derivative 
	\[
		\partial_t g_t = \bra{k^+(\varphi)-k^-(\varphi)} g_t. 
	\]
	The map $t\mapsto S_t f$ satisfies the same differential equation by Lemma~\ref{lem:differential equation}, moreover $S_0 f = g_0 = f$, therefore $g_t = S_t f$ for all $t\geq 0$. 
 In particular, $t = 1$ corresponds to the Baker-Campbell-Hausdorff formula. 
\end{proof}

\subsection{Gamma random measure. Proof of Theorem~\ref{thm:rep-laguerre}}\label{sec:proofs-gamma}

First we prove that the space of polynomials $\mathbb D$ is dense in $L^2(\mathbf M,\rho_{\alpha})$. We actually prove a slightly stronger statement:  the space of polynomials in the masses $\mu(B)$, with $\alpha(B)<\infty$, is dense.

\begin{proof} [Proof of Lemma~\ref{lem:polydense-gamma}] 
	Let $\mathscr U$ be the set of finite linear combinations of indicators $\1_{B}$, with non-negative coefficients, of sets with finite mass $\alpha(B)<\infty$. For $u\in \mathscr U$, let $\mathscr H_0$ be the set of exponential maps $F_u:\mathbf N\to \R_+$, $\mu\mapsto \exp( - \mu(u))$.  Further let $\mathscr H$ be the closure of $\mathscr H_0$, with respect to the supremum norm, of the linear combinations of exponentials. The set $\mathscr H_0$ is closed with respect to pointwise multiplication and the vector space $\mathscr H$ contains the constant function $1$  and is closed in $\mathscr L^\infty(\mathbf M)$. Therefore, by the functional monotone class theorem \cite[Theorem 2.12.9]{bogachev-vol1}, $\mathscr H$ contains all bounded functions that are measurable with respect to $\sigma(\mathscr H_0)$. In view of $\mu(B) = -  \log F_{\1_B}(\eta)$, the set $\mathscr H_0$ generates the full $\sigma$-algebra on $\mathbf M$. Thus, $\mathscr H$ contains all bounded measurable functions. 
	
	As a consequence, the linear combinations of exponentials $\mu\mapsto \exp( - \mu(u))$ are dense in $\mathscr L^\infty(\mathbf M)$ with respect to uniform convergence, hence a fortiori in $L^2(\mathbf M,\rho_{\alpha})$ with respect to  $L^2$-norm. To conclude, we note that the exponential in turn can be approximated in $L^2$-norm by polynomials: Simply consider a truncated exponential series and bound the remainder, using that there exists $\eps = \eps_u>0$ such that $	\int \exp(\eps \mu(u)) \rho_{\alpha}(\dd \mu) <\infty$. 

	We obtain that polynomials in the variables $\mu(u)$ are dense in $L^2(\mathbf M,\rho_\alpha)$. Every polynomial in $\mu(u)$, with $u$ a linear combination of indicators, is also a a polynomial in masses $\mu(B_i)$, $\alpha(B_i)<\infty$. The lemma and the stronger claim stated before this proof follow. 
\end{proof} 

Next we prove an identity for Gamma random measures that will help us prove the adjointness relations~\eqref{eq:adjoint-relations-k}. A straightforward integration by part shows that for every polynomial $f:\R\to \C$, given $a>0$, 
\[
	\int_0^\infty x f'(x) x^{a-1} \e^{-x} \dd x = \int_0^\infty (x - a) f(x) x^{a-1} \e^{-x} \dd x. 
\]
The generalization for Gamma random measures is as follows. 
We write $\E_{\rho_\alpha} [F(\mu)] = \int F(\mu) \rho_\alpha(\dd \mu)$. 

\begin{prop} \label{prop:gamma-ibp}
	Let $\xi$ be a Gamma random measure with law $\rho_\alpha$. 	For all $\varphi \in \mathcal C$ and all polynomials $F\in \mathbb D$, 
	\[
		\E_{\rho_\alpha} \Bigl[ \int \varphi(x) \frac{\delta F}{\delta \mu(x)}(\mu)\, \mu(\dd x)\Bigr] = \E_{\rho_\alpha} \Bigl[ \bigl(\mu(\varphi) - \alpha(\varphi)\bigr) F(\mu)\Bigr]. 
	\] 
\end{prop}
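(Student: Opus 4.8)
The plan is to reduce the infinite-dimensional identity to the scalar integration by parts quoted just before the proposition, by exploiting the representation of $\rho_\alpha$ as the law of a compound Poisson process together with the Mecke equation. Recall that $\mu = \sum_i Z_i \delta_{X_i}$, where $\Pi = \sum_i \delta_{(X_i, Z_i)}$ is a Poisson point process on $\mathbb X \times (0,\infty)$ with intensity measure $\Lambda(\dd x, \dd z) = \alpha(\dd x)\, z^{-1} \e^{-z}\, \dd z$ \cite[Example 15.6]{LastPenroseLecturesOnThePoissonProcess}. Since $\mu(\dd x) = \sum_i Z_i \delta_{X_i}(\dd x)$, the left-hand side of the asserted identity equals $\E_{\rho_\alpha}[\sum_i Z_i \varphi(X_i)\, \frac{\delta F}{\delta \mu(X_i)}(\mu)]$.

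First I would apply the Mecke equation \cite{LastPenroseLecturesOnThePoissonProcess} to this sum over the points of $\Pi$. Adding a point $(x,z)$ to $\Pi$ replaces $\mu$ by $\mu + z\delta_x$, and the factor $z$ from the jump weight cancels against the $z^{-1}$ in $\Lambda$, so Mecke gives
\[
	\E_{\rho_\alpha}\Bigl[ \int \varphi(x) \frac{\delta F}{\delta \mu(x)}(\mu)\, \mu(\dd x)\Bigr] = \int_{\mathbb X} \varphi(x) \int_0^\infty \E_{\rho_\alpha}\Bigl[ \frac{\delta F}{\delta \mu(x)}(\mu + z\delta_x)\Bigr] \e^{-z}\, \dd z\, \alpha(\dd x).
\]

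The key step is then a one-dimensional integration by parts in the jump size $z$. By the very definition of the functional derivative, $\frac{\delta F}{\delta \mu(x)}(\mu + z\delta_x) = \partial_z F(\mu + z\delta_x)$, so the inner integral is $\int_0^\infty \partial_z F(\mu + z\delta_x)\, \e^{-z}\, \dd z$. Since $z \mapsto F(\mu + z\delta_x)$ is a polynomial in $z$, the boundary term at $z = \infty$ vanishes and the one at $z = 0$ equals $-F(\mu)$; integration by parts therefore turns the inner integral into $-F(\mu) + \int_0^\infty F(\mu + z\delta_x)\, \e^{-z}\, \dd z$. This is precisely the mechanism behind the scalar identity quoted before the proposition, now localized at the single atom at $x$.

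Finally I would reassemble the two resulting contributions. The term $-F(\mu)$ integrates to $-\alpha(\varphi)\, \E_{\rho_\alpha}[F(\mu)]$, while a second application of the Mecke equation, read in the reverse direction, identifies
\[
	\int_{\mathbb X} \varphi(x) \int_0^\infty \E_{\rho_\alpha}\bigl[ F(\mu + z\delta_x)\bigr] \e^{-z}\, \dd z\, \alpha(\dd x) = \E_{\rho_\alpha}\Bigl[ \sum_i Z_i \varphi(X_i) F(\mu)\Bigr] = \E_{\rho_\alpha}\bigl[ \mu(\varphi) F(\mu)\bigr].
\]
Adding the two contributions yields $\E_{\rho_\alpha}[(\mu(\varphi) - \alpha(\varphi)) F(\mu)]$, as claimed. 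The main obstacle is not conceptual but one of justification: I must verify the integrability hypotheses needed to invoke the Mecke equation and Fubini, and to interchange expectation with the $z$-integration by parts. These hold because $F \in \mathbb D$ is a polynomial in the variables $\mu(f_i)$ and a Gamma random measure has moments of all orders, while $\varphi \in \mathcal C$ is bounded with $\alpha(\supp \varphi) < \infty$; some care is nonetheless required to dominate the $z = \infty$ boundary term uniformly so that the expectation may be taken inside.
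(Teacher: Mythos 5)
Your argument is correct and rests on exactly the same two pillars as the paper's proof: the compound-Poisson representation of $\rho_\alpha$ combined with the Mecke equation (applied once forward and once in reverse), and a one-dimensional integration by parts in the jump size $z$ against $\e^{-z}\,\dd z$. The only real difference is one of packaging: the paper first proves the identity for exponential functionals $\e^{\mu(f)}$ with non-negative $f,\varphi$ (where Mecke applies without integrability caveats) and then extracts the polynomial statement by expanding $f\mapsto sf$ in powers of $s$, whereas you work directly on polynomials via the observation $\frac{\delta F}{\delta \mu(x)}(\mu+z\delta_x)=\partial_z F(\mu+z\delta_x)$ --- a legitimate streamlining, provided you carry out the integrability checks you flag, since Mecke for signed/complex integrands requires them.
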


\begin{proof}
	We consider first exponential functions $F(\mu) = \exp(  \mu(f))$ and exploit that the Gamma random measure is compound Poisson. Let $\Pi = \sum_i \delta_{(X_i,Z_i)}$ be a Poisson point process on $\mathbb X\times \R_+$ with intensity measure $\alpha(\dd x) z^{-1}\exp( -z) \dd z$ and $\xi = \sum_i Z_i \delta_{X_i}$. By the Mecke equation for $\Pi$ \cite{LastPenroseLecturesOnThePoissonProcess}, 
	\[
		\E\Bigl[ \sum_i G( X_i,Z_i; \Pi) \Bigr] = \int_\mathbb X\Bigl\{ \int_0^\infty  \E\Bigl[ G(x,z;\Pi +\delta_{(x,z)})\Bigr] z^{-1}\e^{-z} \dd z\Bigr\} \alpha(\dd x)
	\]
	for all measurable $G: \mathbb X\times \R_+\times \mathbf N(\mathbb X\times \R_+)\to \R_+$. Let $\varphi$ and $f$ be non-negative test functions in $\mathcal C$. We assume that $||f||_\infty$ is sufficiently small so that $\E_{\rho_\alpha}[\exp( 2 \mu(f))] <\infty$. 	
	The Mecke equation applied to 
	\[
		G\Bigl(x,z;\sum_j \delta_{(x_j,z_j)}\Bigr) = z \varphi(x) f(x) \exp\Bigl(  \sum_j z_j f(x_j)\Bigr)
	\]
	yields
	\begin{equation}\label{eq:gamma-ibp1}
		\E\Bigl[ \xi(\varphi f) \e^{ \xi(f))}\Bigr] =		\int_\mathbb X\Bigl\{ \int_0^\infty  \E\Bigl[ z\varphi(x) f(x) \e^{ z f(x)} \e^{ \xi(f)}\Bigr] z^{-1}\e^{-z} \dd z\Bigr\} \alpha(\dd x).
	\end{equation}
	An integration by parts gives
	\[
		\int_0^\infty f(x) \e^{zf(x)}\e^{-z} \dd z = -1 + \int_0^\infty \e^{zf(x)} \e^{-z} \dd z. 
	\]
	We insert this identity on the right side in Eq.~\eqref{eq:gamma-ibp1} (after changing the order of integration). This gives rise to two terms. The first, coming from $-1$, stays as is. For the second term, we apply again the Mecke equation. At the end, we obtain 
	\[
		\E\Bigl[ \xi(\varphi f) \e^{ \xi(f))}\Bigr] = - \alpha(\varphi) \E\Bigl[ \e^{\xi(f)}\Bigr] + \E\Bigl[ \xi(\varphi) \e^{\xi(f)}\Bigr].
	\] 
	The same identity holds true for $f$ replaced by $sf$ with $s\in (-1,1)$. We expand in powers of $s$, equate coefficients on the left and right side, and obtain 
	\[
		\E\Bigl[ \xi(\varphi f) n \xi(f)^{n-1}\Bigr] = \E\Bigl[ \bigl( \xi(\varphi) - \alpha(\varphi)\bigr) \xi(f)^n\Bigr].
	\] 
	The compound Poisson process $\xi$ has law $\rho_\alpha$ and every polynomial is a linear combination of monomials $\mu \mapsto \mu(f)^n$. It follows that 
	\[
		\E_{\rho_\alpha} \Bigl[ \mu(\varphi f) P'\bigl(\mu(f)\bigr) \Bigr] = \E_{\rho_\alpha} \Bigl[ \bigl(\mu(\varphi) - \alpha(\varphi)\bigr) P\bigl( \mu(f)\bigr)\Bigr]
	\] 
	which is precisely the statement that we were looking for.
\end{proof} 

Let us show right away how Proposition~\ref{prop:gamma-ibp} implies the adjointness relations.

\begin{lemma} \label{lem:gamma-adjoints}
	The operators $K^\#(\varphi)$ defined in Section~\ref{sec:rep-gamma}  satisfy the adjointness relations~\eqref{eq:adjoint-relations-k}.
\end{lemma}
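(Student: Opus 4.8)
The plan is to verify the three adjointness relations in~\eqref{eq:adjoint-relations-k} by reducing every computation to the first-order integration by parts formula of Proposition~\ref{prop:gamma-ibp}. First I would exploit the (anti)linearity of the operators in the test function: since $K^+(\varphi),K^0(\varphi)$ are linear and $K^-(\varphi)$ is antilinear in $\varphi$, splitting $\varphi=\varphi_1+\mathrm i\varphi_2$ shows that it suffices to prove, for \emph{real-valued} $\varphi\in\mathcal C$, that $K^0(\varphi)$ is symmetric, i.e.\ $\la f,K^0(\varphi)g\ra=\la K^0(\varphi)f,g\ra$, and that $\la f,K^+(\varphi)g\ra=\la K^-(\varphi)f,g\ra$. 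Throughout I would use that the functional derivative commutes with complex conjugation, so that $\overline{\frac{\delta f}{\delta\mu(x)}}=\frac{\delta\overline f}{\delta\mu(x)}$ (it is a derivative in the real parameter $t$), together with the Leibniz rules for $\frac{\delta}{\delta\mu(x)}$ and $\frac{\delta^2}{\delta\mu(x)^2}$ on a product.

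The key step is a second-order integration by parts identity that I would obtain from Proposition~\ref{prop:gamma-ibp} \emph{alone}, without a fresh appeal to the Mecke equation. Abbreviating $D_x F:=\frac{\delta F}{\delta\mu(x)}$ and $D_x^2F:=\frac{\delta^2 F}{\delta\mu(x)^2}$ and writing $g=P_n(\mu(f_1),\dots,\mu(f_n))$ so that $D_x g=\sum_j f_j(x)\,\partial_j P_n$, the Leibniz rule gives the rewriting $\overline f\,D_x^2 g=\sum_j f_j(x)\bigl(D_x[\overline f\,\partial_j P_n]-(\overline{D_x f})\,\partial_j P_n\bigr)$. Applying Proposition~\ref{prop:gamma-ibp} with the test functions $\varphi f_j\in\mathcal C$ and the polynomials $\overline f\,\partial_j P_n\in\mathbb D$ and summing over $j$ yields
\[
\E_{\rho_\alpha}\Bigl[\int\varphi\,\overline f\,(D_x^2 g)\,\mu(\dd x)\Bigr]=\E_{\rho_\alpha}\Bigl[\int\varphi\,\overline f\,(D_x g)\,\mu(\dd x)\Bigr]-\E_{\rho_\alpha}\Bigl[\int\varphi\,\overline f\,(D_x g)\,\alpha(\dd x)\Bigr]-\E_{\rho_\alpha}\Bigl[\int\varphi\,(\overline{D_x f})\,(D_x g)\,\mu(\dd x)\Bigr].
\]
Together with the first-order identity $\E_{\rho_\alpha}\bigl[\int\varphi\,((\overline{D_x f})g+\overline f\,(D_x g))\,\mu(\dd x)\bigr]=\E_{\rho_\alpha}\bigl[(\mu(\varphi)-\alpha(\varphi))\,\overline f g\bigr]$, which is Proposition~\ref{prop:gamma-ibp} applied to $F=\overline f g$, these are the only analytic inputs needed; exchanging $f$ and $g$ and conjugating gives the companion formula for $\E_{\rho_\alpha}[\int\varphi\,(\overline{D_x^2 f})\,g\,\mu(\dd x)]$ (recall $\varphi$ is real).

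With these two identities in hand the remaining work is bookkeeping. For the symmetry of $K^0(\varphi)$ I would expand $\la f,K^0(\varphi)g\ra-\la K^0(\varphi)f,g\ra$: the real multiplication term $\tfrac12\alpha(\varphi)$ cancels, and inserting the second-order identity and its $f\leftrightarrow g$ companion into the two second-derivative contributions collapses the difference to first-order terms against $\mu$ and $\alpha$ that cancel identically. For $\la f,K^+(\varphi)g\ra=\la K^-(\varphi)f,g\ra$ I would expand both sides, apply the second-order identity to $\int\varphi\,\overline f\,(D_x^2 g)\,\mu$ on the left and its companion to $\int\varphi\,(\overline{D_x^2 f})\,g\,\mu$ on the right, and then use the first-order identity to dispose of the surviving first-order terms; the multiplication term $(\mu(\varphi)-\alpha(\varphi))$ and the factors $\alpha-2\mu$, $\alpha$ in the definitions then balance exactly.

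I expect the main obstacle to be precisely the second-order identity, namely recognizing that the \emph{quadratic} (second-derivative) part of the operators---which is what makes this an $su(1,1)$ rather than a Heisenberg structure---can be reduced to Proposition~\ref{prop:gamma-ibp} after the Leibniz rewriting with the shifted test functions $\varphi f_j$. Once that reduction is in place, the verification of all three relations in~\eqref{eq:adjoint-relations-k} is routine, if somewhat lengthy, term matching. I would also remark that this same mechanism underlies the self-adjointness and reversibility statements for the neutral operator claimed in Section~\ref{sec:rep-gamma}.
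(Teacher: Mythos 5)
Your proof is correct and follows essentially the same route as the paper: both reduce the second-order (quadratic) terms to Proposition~\ref{prop:gamma-ibp} by applying it to derivative polynomials with the shifted test functions $\varphi f_j$, obtain a second-order integration-by-parts identity, and then finish by term matching against the first-order identity applied to the product $\overline{f}g$. The only difference is organizational — the paper first proves the second-order identity for a single polynomial and then invokes the product rule on $F\overline{G}$, whereas you build the factor $\overline{f}$ into the second-order identity from the start — but the key idea is identical.
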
 

\begin{proof} 
	Pick  $f,\varphi \in \mathcal C$ and $P$ a univariate polynomial. Set $F(\mu) = P(\mu(f))$ and $G(\mu) = P'(\mu(f))$. Notice that 
	\[
		 \frac{\delta^2 F}{\delta \mu(x)^2} (\mu) = f(x)^2  P''\bigl( \mu(x)\bigr)  =  f(x) \frac{\delta G}{\delta \mu(x)}(\mu)
	\] 
	Proposition~\ref{prop:gamma-ibp} applied to $\psi = \varphi f$ and $G$ thus yields 
	\begin{align}
		\E_{\rho_\alpha}\Bigl[ \int \varphi(x) \frac{\delta^2 F}{\delta \mu(x)^2} (\mu) \mu(\dd x) \Bigr] 
		& = \E_{\rho_\alpha}\Bigl[ \int \bigl( \mu(\varphi f) - \alpha(\varphi f) \bigr) P'\bigl( \mu(f)\bigr) \Bigr] \notag \\
		& = \E_{\rho_\alpha}\Bigl[ \int \varphi(x) \frac{\delta F}{\delta \mu(x)}(\mu) \, (\mu- \alpha)(\dd x) \Bigr], \label{eq:adjo}
	\end{align}
	The statement extends, by linearity, to all polynomials $F\in \mathbb D$, and it is equivalent to 
	\begin{equation}\label{eq:adjo1}
		\E_{\rho_\alpha}\Bigl[ k^-(\varphi) F (\mu)\Bigr] = \E_{\rho_\alpha}\Bigl[ \int \varphi(x) \frac{\delta F}{\delta \mu(x)} (\mu) \mu(\dd x) \Bigr] = \E_{\rho_\alpha}\Bigl[ \bigl( \mu(\varphi) - \alpha(\varphi)\bigr) F(\mu)\Bigr].
	\end{equation}
	(For the last equality we have used again Proposition~\ref{prop:gamma-ibp}.) 
	
	Now let $F$ and $G$ be two polynomials in $\mathbb D$ and $\varphi$ be a test function in $\mathcal C$. We apply Eq.~\eqref{eq:adjo1} to the function $F\overline G$, keep in mind the  product rule for the variational derivatives $\frac{\delta}{\delta \mu(x)}$, and get 
	\begin{multline} \label{eq:adjo2}
		\la G, k^-(\varphi) F\ra + \la k^-(\overline \varphi) G, F\ra + 2\E_{\rho_\alpha} \Bigl[ \int \varphi(x) \frac{\delta F}{\delta \mu(x)} \frac{\delta \overline G}{\delta \mu(x)} (\mu) \mu(\dd x)\Bigr] \\
		= \langle \bigl(\mu(\overline \varphi)- \alpha(\overline\varphi)\bigr) G, F\rangle. 
	\end{multline}
	On the other hand Proposition~\ref{prop:gamma-ibp} implies 
	\begin{multline*}
		\E_{\rho_\alpha}\Bigl[ \int \varphi(x) \overline{G(\mu)} \frac{\delta F}{\delta \mu(x)}(\mu) \mu(\dd x) \Bigr] 
		= - \E_{\rho_\alpha}\Bigl[ \int \varphi(x)  \frac{\delta \overline G}{\delta \mu(x)}(\mu) F(\mu) \mu(\dd x) \Bigr] \\
		+ \E_{\rho_\alpha}\Bigl[ \bigl( \mu(\varphi) - \alpha(\varphi)\bigr) \overline G F(\mu)\Bigr]
	\end{multline*}
	and then, by an argument similar to ~\eqref{eq:adjo}, 
	\begin{align*}
		&\E_{\rho_\alpha}\Bigl[ \int \varphi(x) \frac{\delta \overline{G}}{\delta \mu(x)} \frac{\delta F}{\delta \mu(x)}(\mu) \mu(\dd x) \Bigr] \\
		& \quad = - \E_{\rho_\alpha}\Bigl[ \int \varphi(x)  \frac{\delta^2 \overline G}{\delta \mu(x)}(\mu) F(\mu) \mu(\dd x) \Bigr] 
		+ \E_{\rho_\alpha}\Bigl[ \int \varphi(x) \frac{\delta \overline G}{\delta \mu(x)} (\mu) F(\mu) (\mu-\alpha)(\dd x) \Bigr]	 \\
		& \quad = - \la k^-(\overline \varphi) G, F\ra + \E_{\rho_\alpha}\Bigl[ \int \varphi(x) \frac{\delta \overline G}{\delta \mu(x)} (\mu) F(\mu) \mu(\dd x) \Bigr].
	\end{align*}
	We combine this equation with~\eqref{eq:adjo2} and get
	\begin{multline} 
		\la G, k^-(\varphi) F\ra  =  \la k^-(\overline \varphi) G, F\ra - 2\E_{\rho_\alpha} \Bigl[ \int \varphi(x) \frac{\delta \overline G}{\delta \mu(x)} (\mu) F(\mu) \mu(\dd x) ] \\
		+ \langle \bigl(\mu(\overline \varphi)- \alpha(\overline\varphi)\bigr) G, F\rangle. 
	\end{multline}
	The right side is precisely $\la K^+(\overline{\varphi}) F, G\ra$.  Thus we have proven the adjointness relation for the raising and lowering operators. The symmetry of the neutral operators is proven by similar computations that we omit. 	
\end{proof} 

Now we  have all we need to prove Theorem~\ref{thm:rep-laguerre}.

\begin{proof}[Proof of Theorem~\ref{thm:rep-laguerre}]
 (i) The set of polynomials  $\mathbb D$ is  dense in $L^2(\mathbf M,\rho_\alpha)$ by Lemma~\ref{lem:polydense-gamma} and one easily checks that $K^\pm(\varphi)$ and $K^0(\varphi)$ map polynomials to polynomials. 
 
 (ii) Cleary $K^-(\varphi)$ is antilinear in $\varphi$ and $K^0(\varphi)$ and $K^+(\varphi)$ are linear in $\varphi$. 
 
 (iii) It is enough to check the commutation relations on monomials $F(\mu) = \mu(f)^n$  (every other monomial is a linear combination of such simple powers, e.g., $\mu(f) \mu(g) = \frac1 4( \mu (f+g)^2 - \mu( f-g)^2)$).  Thus, let $F(\mu) = P(\mu(f))$ with $P(x) = x^n$. 
	
	We show first that the lowering operators commute. Explicit computations yield
 	\[
 		K^-(\varphi) F(\mu)  = \mu(\overline{\varphi} f^2) P''\bigl( \mu(f)\bigr) + \alpha(\overline{\varphi} f) P'\bigl( \mu(f)\bigr)
 	\]
  and 
  	 \begin{align*}
  		K^-(\theta) K^-(\varphi) F(\mu) = & \mu(\overline{\theta} f^2) \mu(\overline{\varphi} f^2)  P^{(4)}\bigl(\mu(f)\bigr) \\
  		&  +  \Bigl( 2\mu(\overline{\varphi}\overline{\theta} f^3) + \alpha(\overline \varphi f) \mu (\overline{\theta} f^2) +
  	\alpha(\overline \theta f) \mu (\overline{\varphi} f^2)	  \Bigr) P^{(3)}\bigl(\mu(f)\bigr) \\
  	&	+ \Bigl( \alpha(\overline{\theta} \overline{\varphi} f^2) + \alpha(\overline \theta f) \alpha(\overline{\varphi} f) \Bigr) P''\bigl(\mu(f)\bigr).
  \end{align*} 
  The right side is symmetric in $\theta$ and $\varphi$, hence $[K^-(\theta), K^-(\varphi)] F =0$. Turning to the commutation relation between the lowering and the neutral operators, we note that $K^0(\varphi)  = - K^-(\overline{\varphi}) + R(\varphi) + \frac12 \alpha(\varphi)$ with 
  \begin{equation*}
  	R(\varphi) F(\mu) = \int \varphi(x) \frac{\delta F}{\delta \mu(x)} \mu(\dd x) = \mu(\varphi f) P'\bigl( \mu(f)\bigr).
  \end{equation*}
  To lighten notation, we drop the variables $\mu(f)$ from the polynomial $P$ and its derivatives in the final expressions. 
  We compute 
  \begin{align*}
  	R(\varphi) K^-(\theta) F(\mu) & = \int \varphi(x) \frac{\delta}{\delta \mu(x)} \Bigl( \mu(\overline{\theta} f^2) P''\bigl(\mu(f)\bigr) + \alpha(\overline{\theta} f) P'\bigl(\mu(f)\bigr)  \Bigr) \mu(\dd x)\\
  	& = 
  	\mu(\varphi \overline \theta f^2) P'' + \mu(\overline\theta f^2) \mu(\varphi f) P^{(3)} + \alpha(\overline{\theta} f) \mu(\varphi f) P'' 
  \end{align*} 
  and 
  \begin{align*}
  	K^-(\theta) R(\varphi) F(\mu)  & = 
  		\int \overline{\theta(x)} \frac{\delta^2}{\delta \mu(x)^2} \mu(\varphi f) P'\bigl( \mu(f)\bigr) \mu(\dd x) \\
  	&\qquad \qquad 	+ \int \overline{\theta(x)} \frac{\delta}{\delta \mu(x)} \mu(\varphi f) P'\bigl( \mu(f)\bigr) \alpha(\dd x) \\
  	& = 
  	2 \mu (\overline \theta \varphi f^2) P'' + \mu(\varphi f) \mu(\overline{\theta} f^2) P^{(3)} \\ 
  	&\qquad \qquad + \alpha(\overline{\theta} \varphi f) P' + \alpha(\overline \theta f) \mu(\varphi f) P''.
  \end{align*}
  It follows that 
  \begin{align*}
  	[K^0(\varphi), K^-(\theta)] F(\mu) & = [R(\varphi), K^-(\theta)] F(\mu) \\
  	& = - \mu (\overline \theta \varphi f^2) P''- \alpha(\overline{\theta} \varphi) P' \\
  	& = - K^-(\theta  \overline \varphi) F(\mu).
  \end{align*} 
  The other commutation relations are proven by similar computations that we omit.
  
  (iv) The adjointness relations have been proven in Lemma~\ref{lem:gamma-adjoints}.  
  
  (v) The constant function $1$ has norm $1$ in $L^2(\mathbf M,\rho_\alpha)$ because $\rho_\alpha$ is a probability measure. It is annihilated by the lowering operators because the latter are differential operators.
  
  (vi) We show first that raising operators yield Laguerre polynomials. Let $\ell\in \N$ and $B_1,\ldots, B_\ell\in \mathcal X$ be disjoint sets with $\alpha(B_i)<\infty$. Further let $n_1,\ldots, n_\ell \in \N$. Remember the univariate operators $K^+$ from Proposition~\ref{prop:rep-uni-laguerre}. The operator used a scalar $\alpha>0$ as a parameter, but now $\alpha$ is a measure; we define $K_i^+$ as the univariate operator with scalar $\alpha(B_i)>0$. 
	Write $\mathbf 1$ for the constant function $\N_0\ni n\mapsto 1$. An induction over $n_1+\cdots + n_\ell$ yields 
	\[
		K^+(\1_{B_1})^{n_1}\cdots K^+(\1_{B_\ell})^{n_\ell} \1 (\eta) = \prod_{i=1}^{\ell} \bigl( (-K_i^+)^{n_i} \mathbf 1\bigr)\bigl( \mu(B_i)\bigr). 
	\] 
	The minus sign arises because we made slightly different choices when defining $K^\#$ and $K^\#(\varphi)$: In the univariate case, we prioritized the definition through unitary equivalence, for the current algebra we changed signs so that $K^+(\1_B)^n\1$ is a polynomial with positive leading coefficient. Next we note
	\[
		(-K_i^+)^{n_i}(\mu(B_i))^{n_i}
 = (-1)^{n_i} n_i! L_{n_i}^{(\alpha(B_i)-1)} \bigl( \mu(B_i) \bigr) = \mathscr L_{n_i}^{(\alpha(B_i)-1)}\bigl( \mu(B_i)\bigr).
	\]
	Eq.~\eqref{eq:kplusit-laguerre} in Theorem~\ref{thm:rep-laguerre} follows. In particular, the linear hull of $\1$ and the iterates $\prod_{i=1}^\ell K^+(\1_{B_i})^{n_i}\1$ contains all polynomials in occupation numbers $\mu(B)$, where $B$ runs through the sets of finite measure $\alpha(B)<\infty$. As noted at the beginning of this section, these polynomials are dense in $L^2(\mathbf M,\rho_\alpha)$, therefore the constant function $1$ is cyclic.
\end{proof}

\subsection{Pascal point process. Proof of Theorem~\ref{thm:rep-meixner}} \label{sec:proofs-pascal}
For the commutation relations for $K^\pm (\varphi)$, $K^+(\theta)$, it is convenient to introduce another family of operators $k^\#(\varphi)$, $\varphi \in \mathcal C$, defined on the space $\mathbb D$ of polynomials by
\begin{align*}
	k^-(\varphi) F(\eta) & =\frac{1}{\sqrt p} \int \overline{\varphi(x)}  F(\eta+\delta_x) (\alpha + \eta)(\dd x),\\
	k^+(\varphi) F(\eta) &= \sqrt p \int  \varphi(x) F(\eta-\delta_x) \eta(\dd x),\\
	k^0(\varphi) F(\eta) & = F(\eta) \int \varphi(x) \Bigl( \eta+ \frac12 \alpha \Bigr) (\dd x). 
\end{align*}
These operators are similar to their counterparts on the extended Fock space. 
Notice 
\begin{align*}
    K^-(\varphi) &= \frac{1}{c} \bra{ \sqrt p\, k^+(\overline{\varphi})  + \frac 1{\sqrt p}\, k^-(\varphi)- 2  k^0(\overline{\varphi})}, \\
     K^+(\varphi) &= \frac{1}{c} \bra{ \frac{1}{\sqrt p}\, k^+(\varphi)  + \sqrt p\, k^-(\overline{\varphi})- 2 k^0(\varphi)},\\
    K^0(\varphi) &=  \frac{1}{c} \bra{ - k^+(\varphi)  -k^-(\overline{\varphi}) + \bigl({\sqrt p}^{-1}+\sqrt p\bigr) k^0(\varphi)}.
\end{align*}

\begin{lemma} \label{lem:commrel-meixner}
	The operators $k^\#(\varphi)$, $\varphi\in \mathcal C$, map $\mathbb D$ to itself and on $\mathbb D$ they satisfy the commutation relations~\eqref{eq:commutation-relations-k}. 
\end{lemma}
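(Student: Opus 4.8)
The plan is to verify everything by the elementary ``add-a-point / remove-a-point'' calculus on configurations, since all three operators are built from the shifts $\eta \mapsto \eta \pm \delta_x$. For the invariance of $\mathbb{D}$, I would first record that for $F(\eta) = P(\eta(\varphi_1), \ldots, \eta(\varphi_m))$ one has $F(\eta \pm \delta_x) = P(\eta(\varphi_1) \pm \varphi_1(x), \ldots, \eta(\varphi_m) \pm \varphi_m(x))$, so that $F(\eta \pm \delta_x)$ is again a polynomial in the variables $\eta(\varphi_i)$ whose coefficients are polynomials in the numbers $\varphi_i(x)$. Multiplying by $\varphi(x)$ or $\overline{\varphi(x)}$ and integrating against $\eta(\dd x)$ or $(\alpha + \eta)(\dd x)$ then turns every monomial $\prod_i \varphi_i(x)^{c_i}$ into a new variable $\eta(\varphi \prod_i \varphi_i^{c_i})$ or a constant multiple of $\alpha(\varphi \prod_i \varphi_i^{c_i})$, with the combining test function again in $\mathcal{C}$. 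Hence each $k^\#(\varphi)$ maps $\mathbb{D}$ into itself; the case of $k^0(\varphi)F = (\eta(\varphi) + \tfrac12\alpha(\varphi))F$ is immediate.

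For the commutation relations I would compute the two orderings of each product directly. The one identity doing all the work is
\[
\int g(x,y)\,(\eta \pm \delta_x)(\dd y) = \int g(x,y)\,\eta(\dd y) \pm g(x,x),
\]
together with its analogue for $\alpha + \eta$ in place of $\eta$; applied inside a composite it splits each double integral into a ``bulk'' part and a diagonal correction. For $[k^-(\varphi), k^+(\theta)]$ the two bulk double integrals $\iint \overline{\varphi(x)}\theta(y) F(\eta + \delta_x - \delta_y)\,\cdots$ coincide by Fubini and cancel, while the surviving diagonal terms assemble into $(\alpha(\overline\varphi\theta) + 2\eta(\overline\varphi\theta))F = 2k^0(\overline\varphi\theta)F$. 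For the mixed relations, since $k^0(\varphi)$ is multiplication by $\eta(\varphi) + \tfrac12\alpha(\varphi)$, commuting it past a shift $\eta \mapsto \eta \mp \delta_{(\cdot)}$ leaves a single term $\pm\varphi(\cdot)$, which reassembles into $k^+(\varphi\theta)$ for $k^+$ and into $-k^-(\overline\varphi\theta)$ for $k^-$. Finally $[k^0(\varphi), k^0(\theta)] = 0$ is trivial (two multiplication operators), whereas $[k^\pm(\varphi), k^\pm(\theta)] = 0$ holds because, after expanding, both orderings produce the same symmetric bulk double integral plus the same symmetric diagonal term $\int \overline{\varphi}\,\overline{\theta}\, F(\eta + 2\delta_x)\cdots$ (respectively the version with $-2\delta_x$).

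I expect the only real issue to be bookkeeping rather than difficulty: keeping track of the diagonal contributions produced at each stage, noting that the prefactors $1/\sqrt p$ and $\sqrt p$ cancel in $k^-(\varphi)k^+(\theta)$ and $k^+(\theta)k^-(\varphi)$ so that $p$ plays no role in \eqref{eq:commutation-relations-k}, and checking that Fubini is legitimate. The latter is harmless, since test functions in $\mathcal{C}$ are bounded and supported on sets of finite $\alpha$-measure and $F \in \mathbb{D}$ is polynomial, so all integrands are absolutely integrable. One should also observe that the backward shift $\eta - \delta_x$ is evaluated only at points $x$ in the support of $\eta$ (it always sits under $\int \cdots \eta(\dd x)$), so that every expression $F(\eta - \delta_x)$, and the iterated $F(\eta - \delta_x - \delta_y)$ occurring in $[k^+,k^+]$, is well defined. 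Because these are precisely the manipulations already carried out for the extended Fock representation in the proof of Theorem~\ref{thm:fock-rep}, the remaining relations are entirely parallel, and I would spell out only $[k^-(\varphi),k^+(\theta)]$ and one mixed relation in full.
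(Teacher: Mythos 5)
Your proposal is correct and follows essentially the same route as the paper's proof: the invariance of $\mathbb D$ is obtained by expanding $F(\eta\pm\delta_x)$ as a polynomial whose coefficients are monomials in the $\varphi_i(x)$, and the commutators are computed by writing out both orderings, splitting each double integral into a bulk part (which cancels by Fubini) and the Dirac corrections $\pm\delta_x(\dd y)$, whose surviving terms assemble into $\bigl(\alpha(\overline\varphi\theta)+2\eta(\overline\varphi\theta)\bigr)F=2k^0(\overline\varphi\theta)F$ exactly as in the paper. Your additional remarks on the cancellation of the $\sqrt p$ prefactors, the legitimacy of Fubini, and the well-definedness of the backward shifts are all accurate refinements of the same argument.
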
 

\begin{proof} 
	Consider a monomial $ F(\eta) = \eta(f_1)\cdots \eta(f_n)$, with $n\in \N$ and $f_1,\ldots,f_n\in \mathcal C$. Then 
	\begin{align*}
		k^+(\varphi) F(\eta)  & = \sqrt p \int \varphi(x) \prod_{i=1}^n \bigl( \eta(f_i) - f_i(x)\bigr) \eta(\dd x) \\
		& = \sqrt p \sum_{I\subset [n]} (-1)^{n-|I|} \Biggl( \prod_{i\in I} \eta(f_i)\Biggr) \eta\Bigl( \varphi\prod_{i\in [n]\setminus I} f_i \Bigr)
	\end{align*} 
	where $[n] = \{1,\ldots, n\}$. It follows that $k^+(\varphi) F$ is a polynomial. The constant function $\1$ is mapped to $\sqrt p \eta(\varphi)$, which is a polynomial as well. As every polynomial is a linear combination of $\1$ and monomials of degree $n\geq 1$, it follows that $k^+(\varphi) \mathbb D\subset \mathbb D$. The reasoning for $k^-(\varphi)$ and $k^0(\varphi)$ is similar and therefore omitted. 
	
	For the commutation relations, we note
\begin{align*}
	k^-(\varphi)k^+(\psi) F(\eta) & = \int\overline{\varphi(x)} \Bigl( \int \psi(y) F(\eta+\delta_x - \delta_y) (\eta+\delta_x)(\dd y) \Bigr)(\alpha+\eta)(\dd x), \\
	k^+(\psi)k^-(\varphi) F(\eta) & = \int \psi(y) \Bigl(\int \overline{\varphi(x)}f(\eta+\delta_x - \delta_y)(\alpha+\eta- \delta_y)(\dd x) \Bigr)  \eta(\dd y).
\end{align*} 
When we subtract the second line from the first, everything cancels out except the Dirac terms $\delta_x(\dd y)$ and $- \delta_y(\dd x)$ in the inner integrals. Hence
\begin{align*}
	[k^-(\varphi),k^+(\psi)] F(\eta) &= \Bigl( \int \overline{\varphi(x)} \psi(x) (\alpha+\eta)(\dd x) + \int \psi(y) \overline{\varphi(y)} \eta(\dd y) \Bigr) F(\eta) \\
	& = 2 k^0(\overline{\varphi}\psi) F(\eta).
\end{align*}
The other commutation relations are proven by similar explicit computations, we leave the details to the reader.
\end{proof} 

\begin{lemma} \label{lem:self-adj-meixner}
	The operators $k^\#(\varphi)$ satisfy the adjointness relations~\eqref{eq:adjoint-relations-k} in $L^2(\mathbf N,\rho_{p,\alpha})$. 
\end{lemma}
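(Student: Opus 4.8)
\emph{Plan.} The two relations in~\eqref{eq:adjoint-relations-k} split into an easy case and the main case. For the neutral operator, $k^0(\varphi)$ is simply multiplication by the function $\eta\mapsto\int\varphi(x)(\eta+\frac12\alpha)(\dd x)$; its adjoint is multiplication by the complex conjugate, and since $\overline{\int\varphi\,\dd(\eta+\frac12\alpha)}=\int\overline{\varphi}\,\dd(\eta+\frac12\alpha)$, the relation $\la f,k^0(\varphi)g\ra=\la k^0(\overline{\varphi})f,g\ra$ is immediate. So the whole content is the raising/lowering relation $\la f,k^+(\varphi)g\ra=\la k^-(\varphi)f,g\ra$.

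The engine for that relation is a Mecke-type partial-integration identity for the Pascal point process, the exact analogue of the role played by the Mecke equation in Proposition~\ref{prop:gamma-ibp}. Concretely, I would establish that for every bounded measurable $G:\mathbb X\times\mathbf N\to\C$ that vanishes, in the $x$-variable, off a set of finite $\alpha$-measure,
\begin{equation*}
	\E_{\rho_{p,\alpha}}\Bigl[\int G(x,\eta)\,\eta(\dd x)\Bigr]=p\,\E_{\rho_{p,\alpha}}\Bigl[\int G(x,\eta+\delta_x)\,(\alpha+\eta)(\dd x)\Bigr].
\end{equation*}
This is the statement that the Papangelou kernel of the Pascal process is the constant $p$ with respect to the random reference measure $\alpha+\eta$. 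To prove it I would reduce, by linearity and a functional monotone-class argument exactly as in Lemma~\ref{lem:polydense-pascal}, to integrands of the form $G(x,\eta)=\psi(x)\prod_{j}u_j(\eta(B_j))$ with $B_1,\dots,B_k$ pairwise disjoint of finite $\alpha$-mass and $\psi$ supported in one block; the independence of the counts $\eta(B_j)$ then factorizes the identity and leaves the single-site recurrence $\E[N g(N)]=p\,\E[(\alpha(B)+N)g(N+1)]$ for $N=\eta(B)$, which is nothing but the weight relation $n\,w(n)=p\,(a+n-1)\,w(n-1)$ for the negative-binomial probabilities $w(n)=(1-p)^{a}p^{n}(a)_{n}/n!$ with $a=\alpha(B)$, read off from the explicit law. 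Alternatively one may invoke the Poisson--Gamma (Cox) representation of the Pascal process and deduce the identity from the ordinary Mecke equation applied conditionally, which ties the computation back to the machinery already used for $\rho_\alpha$.

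With the Mecke identity in hand, the adjointness relation follows by substitution. Writing $\la f,k^+(\varphi)g\ra$ as a scalar multiple of the expectation of the point-integral $\int\overline{f(\eta)}\,\varphi(x)\,g(\eta-\delta_x)\,\eta(\dd x)$, I would apply the identity with $G(x,\eta)=\overline{f(\eta)}\varphi(x)g(\eta-\delta_x)$: the shift $\eta\mapsto\eta+\delta_x$ turns $g(\eta-\delta_x)$ into $g(\eta)$ and $\overline{f(\eta)}$ into $\overline{f(\eta+\delta_x)}$, the point measure $\eta(\dd x)$ into $(\alpha+\eta)(\dd x)$, and after collecting the scalar factors one recognizes exactly $\E_{\rho_{p,\alpha}}\bigl[\overline{k^-(\varphi)f}\;g\bigr]=\la k^-(\varphi)f,g\ra$. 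Throughout one restricts to $f,g\in\mathbb D$, where all moments of $\rho_{p,\alpha}$ are finite and the integrands are polynomials supported on sets of finite $\alpha$-measure, so every expectation converges absolutely and the manipulations are legitimate.

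The main obstacle is the Mecke identity itself: pinning down its precise form---in particular that the correct reference measure is the \emph{random} measure $\alpha+\eta$ and that the multiplicative constant is exactly $p$---and then matching the bookkeeping of the $\sqrt p$- and $1/\sqrt p$-factors so that the asymmetry between the $(\alpha+\eta)$- and $\eta$-integrations cancels cleanly. Once the identity is stated and proved, the remainder is routine substitution, entirely parallel to the Gamma computation in Lemma~\ref{lem:gamma-adjoints}.
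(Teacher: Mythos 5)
Your proposal is correct and follows essentially the same route as the paper: the neutral operator is handled as a multiplication operator, and the $k^{\pm}$ relation is reduced to the Mecke--Papangelou identity $\iint F(x,\eta)\,\eta(\dd x)\,\rho_{p,\alpha}(\dd\eta)=\iint F(x,\eta+\delta_x)\,p\,(\alpha+\eta)(\dd x)\,\rho_{p,\alpha}(\dd\eta)$ followed by the same substitution. The only difference is that the paper cites this identity from its companion article \cite{floreani-jansen-wagner2023algebraic} while you sketch a self-contained proof via the single-block negative-binomial weight recurrence, which is a valid (and slightly more self-contained) way to supply that ingredient.
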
 

\begin{proof} 
	The neutral operators $k^0(\varphi)$ is a multiplication operators which multiplies with $\eta(\varphi)+ \frac12 \alpha(\varphi)$. The adjoint multiplies with the complex conjugate, i.e., with $\eta(\overline{\varphi}) + \frac12 \alpha(\overline \varphi)$, hence $\langle F,k^0(\varphi) G\rangle = \langle k^0(\overline{\varphi}) F, G\rangle$. 
	
	For $k^\pm(\varphi)$, we exploit the relation 
	\begin{equation} \label{eq:papangelou}
		\iint F(x,\eta)  \eta(\dd x) \rho_{p,\alpha}(\dd \eta) = \iint F(x,\eta+\delta_x) p (\alpha+\eta)(\dd x) \rho_{p,\alpha}(\dd \eta). 
	\end{equation}
	It is valid for all measurable $F:\mathbf N\times \mathbb X\to \R_+$, see \cite[Proposition 3.1]{floreani-jansen-wagner2023algebraic}. Let $\varphi \in \mathcal C$ be non-negative and $f,g$ be non-negative polynomials, e.g., $f(\eta) = \eta(f_1)\cdots \eta(f_n)$ with $f_i:\mathbb X\to \R_+$ in $\mathcal C$, similarly for $g$. Eq.~\eqref{eq:papangelou} applied to $F(x,\eta) = \varphi(x) \overline{f(\eta+\delta_x)} g(\eta)$ yields $\la f, k^+(\varphi) g\rangle = \langle k^-(\varphi)f,g\ra$, compare \cite[Lemma 3.2]{floreani-jansen-wagner2023algebraic}. The relation is extended to all polynomials and all $\varphi \in \mathcal C$ by taking linear combinations. 
\end{proof} 

Now we are equipped for the proof of Theorem~\ref{thm:rep-meixner}. 

\begin{proof}[Proof of Theorem~\ref{thm:rep-meixner}]
	(i) The space $\mathbb D$ of polynomials is dense in $L^2(\mathbf N,\rho_{p,\alpha})$ by Lemma~\ref{lem:polydense-pascal} and the operators $K^\#(\varphi)$, as linear combinations of the operators $k^\#(\varphi)$, map $\mathbb D$ to itself by Lemma~\ref{lem:commrel-meixner}. 
	
	(ii) Clearly $K^+(\varphi)$ and $K^0(\varphi)$ are linear in $\varphi$ and $K^-(\varphi)$ is antilinear in $\varphi$. 
	
	(iii)  The commutation relations are deduced from Lemma~\ref{lem:commrel-meixner} as follows. The commutator $[K^-(\varphi), K^+(\theta)]$, times $c^2$, is a sum of three terms. First, 
\[
	\Bigl[ \sqrt p\, k^+(\overline{\varphi}) + \frac{1}{\sqrt p} k^-(\varphi),\frac{1}{\sqrt p}\, k^+(\theta)  + \sqrt p\, k^-(\overline{\theta}) \Bigr] = - 2 p k^0(\overline{\varphi}\theta) + \frac 2 p\, k^0(\overline{\varphi}\theta).
\]
Second, 
\[
	\Bigl[ \sqrt p\, k^+(\overline{\varphi}) + \frac{1}{\sqrt p} k^-(\varphi),- 2 k^0(\theta) \Bigr] 
	 = 2 \sqrt p\, k^+(\overline{\varphi}\theta) - \frac{2}{\sqrt p} k^-(\varphi\overline{\theta}).
\]
Third, 
\[
	\Bigl[ -2 k^0(\overline{\varphi}),\frac{1}{\sqrt p}\, k^+(\theta)  + \sqrt p\, k^-(\overline{\theta}) \Bigr] = - \frac{2}{\sqrt p}\, k^+(\overline{\varphi} \theta) + 2 \sqrt p\, k^-({\varphi} \overline{\theta}).
\]
The sum of these three expressions is 
\[
	- 2 c \bigl( k^+(\overline{\varphi} \theta) + k^-({\varphi}\overline{\theta}) \bigr) +2 c\bigl(\sqrt{p}^{-1} + \sqrt p) \bigr) k^0(\overline{\varphi} \theta\bigr) = 2 c^2 K^0(\overline{\varphi} \theta).
\] 
This completes the proof of $[K^-(\varphi), K^+(\theta)] = 2 K^0(\overline{\varphi}\theta)$. The proof of the other commutation relations is similar. 	

	(iv) The adjointness relations for $K^\#(\varphi)$ follow from the relations for $k^\#(\varphi)$, proven in Lemma~\ref{lem:self-adj-meixner}.
	
	(v) The constant function $\Psi = \1$ has norm $1$ in $L^2(\mathbf N,\rho_{p,\alpha})$ because $\rho_{p,\alpha}$ is a probability measure. The function $\Psi = \1$ is annihilated by all lowering operators $K^-(\varphi)$ because $\mathrm D^+_x \1(\eta) = \mathrm D_x^-\1(\eta) =0$ for all $x,\eta$. 
	
	(vi) We show first that raising operators yield Meixner polynomials. Let $\ell\in \N$ and $B_1,\ldots, B_\ell\in \mathcal X$ be disjoint sets with $\alpha(B_i)<\infty$. Further let $n_1,\ldots, n_\ell \in \N$. Remember the univariate operators $K^+$ from Proposition~\ref{prop:rep-uni-meixner}. The operator used a scalar $\alpha>0$ as a parameter, but now $\alpha$ is a measure; we define $K_i^+$ as the univariate operator with scalar $\alpha(B_i)>0$. 
	Write $\mathbf 1$ for the constant function $\N_0\ni n\mapsto 1$. An induction over $n_1+\cdots + n_\ell$ yields 
	\[
		K^+(\1_{B_1})^{n_1}\cdots K^+(\1_{B_\ell})^{n_\ell} \1 (\eta) = \prod_{i=1}^{\ell} \bigl( (-K_i^+)^{n_i} \mathbf 1\bigr)\bigl( \eta(B_i)\bigr). 
	\] 
	The minus sign arises because we made slightly different choices when defining $K^\#$ and $K^\#(\varphi)$: In the univariate case, we prioritized the definition through unitary equivalence, for the current algebra we changed signs so that $K^+(\1_B)^n\1$ is a polynomial with positive leading coefficient. By Eq.~\eqref{eq:kplusit-uni}, 
	\[
		(-K_i^+)^{n_i}(\eta(B_i))
 = (-1)^{n_i} \bigl( \sqrt p - {\sqrt p}^{-1}\bigr)^{n_i}\mathcal M_{n_i}\bigl( \eta(B_i);\alpha(B_i),p\bigr).
	\]
	The prefactor in front of the Meixner polynomial is precisely $c^{n_i}$ and Eq.~\eqref{eq:k+iterates} in Theorem~\ref{thm:rep-meixner} follows. In particular, the linear hull of $\1$ and the iterates $\prod_{i=1}^\ell K^+(\1_{B_i})^{n_i}\1$ contains all polynomials in occupation numbers $\eta(B)$, where $B$ runs through the sets of finite measure. By an argument completely analogous to the proof of Lemma~\ref{lem:polydense-gamma} given in Section~\ref{sec:proofs-gamma},  this space is dense in $L^2(\mathbf N,\rho_{p,\alpha})$. Thus $\Psi = \1$ is cyclic.
\end{proof} 

\appendix

\section{Representation in standard Fock space. Araki's scheme} \label{app:araki}

Araki's factorizable representations of a current group $G$ are of the form \cite{araki1969}
\[ 
	\mathcal U_g \mathscr E_f = \exp(\mathrm i c_g) \exp\Bigl( - \frac12 ||\varphi_g||^2 + \la \varphi_g, f\ra \Bigr) \mathscr E_{Q_g(f-\varphi_g)}.
\]
The unitaries $\mathcal U_g$ operate in a Fock space $\mathfrak H$  for which the single-particle Hilbert space has the structure of a fiber integral $\mathfrak h_1 = \int^\oplus \mathfrak h \dd \alpha$. The exponential states $\mathscr E_f$ are dense. The unitaries $Q_g$ form a representation of $G$ in the one-particle Hilbert space $\mathfrak h_1$, they are diagonal with respect to the fiber decomposition. The phase $c_g\in \R$ and the map $g\mapsto \varphi_g\in \mathfrak h_1$ have to satisfy some consistency conditions (cocycle conditions). The difficulty in Araki's approach is to find cocycles. 

Here we sketch how our representation in extended Fock space on $\mathbb X$ may be lifted to a representation in a Fock space on $\mathbb X\times \N$ and how it fits Araki's scheme.

\subsection{Isometric embedding into a standard Fock space. Chinese restaurant process}
We start with a known isomorphism of the extended Fock space with a subspace of a standard Fock space \cite{kondratiev-silva-streit-us1998, Lytvynov2003}.
Let $\mathfrak H$ be a standard Fock space for the one-particle Hilbert space $L^2(\mathbb X\times \N, \alpha \otimes \sum_{n=1}^\infty \frac 1 n \delta_n)$. We include a factorial in the norm, i.e., the norm of an element $\vect F = (F_m)_{m\in \N_0}$ of $\mathfrak H$ is 
\begin{multline*} 
	||\vect F||^2 = |F_0|^2 \\
	+ \sum_{m=1}^\infty \frac{1}{m!} \int \sum_{n_1,\ldots, n_m} \bigl|F_m\bigl( (x_1,n_1), (x_2,n_2), \ldots ( x_m,n_m) \bigr)\bigr|^2 
	\frac{1}{n_1}\cdots \frac{1}{n_m}\, \alpha^m(\dd \vect x).
\end{multline*}
In the picturesque language of the \emph{Chinese restaurant process} \cite[Chapter 3.1]{pitman2006combinatorial}, a configuration $(x_1,n_1),\ldots, (x_m,n_m)$ represents a seating plan in a restaurant with $m$ tables; table no.~$i$ is located at place $x_i$ and has $n_i$ customers. 
(More prosaically, think of $m$ piles with $n_i$ particles sitting atop each other on a location $x_i$.)
  The total number of customers (particles) is $n=n_1+\cdots + n_m$ and we may list the locations of the customers by repeating the entry $x_i$, for each $i=1,\ldots,m$,  $n_i$ times:
\[
	(x_1^{n_1} x_2^{n_2}\cdots x_m^{n_m}) = (x_1,\ldots x_1,\ldots, x_m,\ldots, x_m)\in \mathbb X^{n_1+\cdots + n_m}.
\] 
This induces a mapping from the extended Fock space $\mathfrak F$ to the Hilbert space $\mathfrak H$: Given an element $\vect f= (f_n)_{n\in \N_0}$  of $\mathfrak F$, we define a new sequence $\vect F = (F_m)_{m\in \N_0}$ 
by 
\[
	F_0 = f_0,\quad F_m\bigl( (x_1,n_1),\ldots, (x_m,n_m)\bigr) = f_{\sum n_i}( x_1^{n_1}\cdots x_m^{n_m}).
\]
The map $\mathfrak U: \mathfrak F\to \mathfrak H$, $\vect f\mapsto \vect F$ is norm-preserving. It maps exponential states to exponential states: The image of $\mathcal E_z \in \mathfrak F$ is given by $F_0 =0$ and
\begin{equation}\label{eq:expo-zf}
	F_m\bigl( (x_1,n_1),\ldots,(x_m,n_m)\bigr) = \prod_{i=1}^m z(x_i)^{n_i} = \prod_{i=1}^m f(x_i,n_i)
\end{equation} 
with $f(x,n) = z(x)^n$. That is,  $\mathfrak U \mathcal E_z = \mathscr E_f$ where $\mathscr E_f$ is an exponential state in $\mathfrak H$. 

\subsection{Lifting the representation}
 The operators $k^\#(\varphi)$ from Section~\ref{sec:res-fock} are lifted to operators $K^\#(\varphi)$ in $\mathfrak H$ with $\mathfrak U k^\#(\varphi) = K^\#(\varphi) \mathfrak U$, as follows. The neutral operator in $\mathfrak H$ is a multiplication operator
\begin{multline*}
	K^0(\varphi) F_m\bigl( (x_1,n_1),\ldots, (x_m,n_m)\bigr) \\ = \Bigl( \frac12 \alpha (\varphi) + \sum_{i=1}^m n_i \varphi(x_i)\Bigr) F_m\bigl( (x_1,n_1),\ldots, (x_m,n_m)\bigr).
\end{multline*}
The lowering operator is 
\begin{multline*}
	K^-(\varphi) F_m \bigl( (x_1,n_1),\ldots, (x_m,n_m)\bigr) \\
	  = \sum_{i=1}^m \overline{\varphi(x_i)} n_i F_m\bigl( (x_1,n_1)\ldots (x_i,n_i+1)\ldots (x_m,n_m)\bigr) \\
	  +  \int \overline{\varphi(x)} F_{m+1}\bigl( (x_1,n_1),\ldots, (x_m,n_m), (y,1)\bigr) \alpha(\dd y)
\end{multline*} 
(a new customer may join a table or open a new table). The raising operator is 
\begin{multline*}
	K^+(\varphi) F_m \bigl( (x_1,n_1),\ldots, (x_m,n_m)\bigr) \\
	  = \sum_{i=1}^m \varphi(x_i) n_i \Bigl( \1_{\{n_i\geq 2\}} F_m\bigl( (x_1,n_1)\ldots (x_i,n_i-1)\ldots (x_m,n_m)\bigr)\\
+ 	  \1_{\{n_i = 1\}} F_{m-1}\bigl( (x_1,n_1)\ldots \widehat{(x_i,1)} \ldots (x_m,n_m)\bigr)\Bigr)
\end{multline*} 
(if a solo diner leaves, the number $m$ of tables decreases). Exponentiating as in Theorem~\ref{thm:group}, we should obtain a family of unitary operators in $\mathfrak H$ and a representation of the current group for the universal cover of $SU(1,1)$. 
The action of 
\[
	\mathcal U(\xi,\theta) = \exp\bigl( K^+(\xi) - K^-(\xi)\bigr) \exp\bigl( 2 \mathrm i  K^0(\theta)\bigr)
\]
on exponential states $\mathscr E_f$ with $f(x,n) = z(x)^n$ (see~\eqref{eq:expo-zf}) is 
\begin{equation} \label{eq:uxitheta-lifted}
	\mathcal U(\xi,\theta) \mathscr E_f = \mathcal C_{\xi,\theta}(f) \mathscr E_{\widehat f},\quad \widehat f(x,n) = z_{\xi,\theta}(x)^n
\end{equation} 
with 
\begin{equation} \label{eq:cxitheta}
	\mathcal C_{\xi,\theta}(f) = \exp\Bigl( \mathrm i \int \theta \dd \alpha  - \int \log \Bigl( \cosh |\xi| + \e^{2 \mathrm i \theta } z \frac{\overline{\xi}}{|\xi|} \sinh|\xi| \Bigr) \dd \alpha \Bigr)
\end{equation}
and 
\begin{equation} \label{eq:zxitheta}
	z_{\xi,\theta}(x) = \frac{\e^{2 \mathrm i \theta(x)} z(x) + \frac{\xi(x)}{|\xi(x)|} \tanh|\xi(x)| }{1+  \e^{2\mathrm i \theta(x)} z(x) \frac{\overline{\xi(x)}}{|\xi(x)|} \tanh|\xi(x)|}.
\end{equation}

\subsection{Matching the lifted representation to Araki's scheme} We introduce a single-table, non-spatial Hilbert space $\mathfrak h = \ell^2(\N,\sum_{n=1}^\infty \frac 1n \delta_n)$, and operators
\[
	\mathrm k^+ f(n) = n \1_ {\{n\geq 2\}} f(n-1),\quad \mathrm k^- f(n) = n f(n+1),\quad \mathrm k^0 f(n) = n f(n). 
\] 
These operators satisfy the $su(1,1)$ commutation relations. The vacuum is $\psi_0(n) = \delta_{n,1}$, the Bargmann index is $m_0=2$ (because $\mathrm k^0 \psi_0 = \frac 2 2  \psi_0$). Set 
\[
	Q_{\xi,\theta} = \exp( \xi \mathrm k^+ - \overline \xi \mathrm k^-) \exp(2\mathrm i \theta \mathrm k^0).
\]
We use the same letters for the operators in $L^2(\mathbb X\times \N, \alpha\otimes \sum_{n=1}^\infty \frac{1}{n} \delta_n)$ that act on the $n$-variable only. Further set
\[
	\varphi_{\xi,\theta}(x,n) = \Bigl( - \e^{2 \mathrm i \theta} \frac{\xi}{|\xi|} \tanh|\xi| \Bigr)^n.
\]

\begin{prop} \label{prop:araki}
	Let $\xi,\theta \in \mathcal C$ be complex- and real-valued functions in $\mathcal C$, respectively, and $f\in L^2(\mathbb X\times \N, \alpha\otimes \sum_{n=1}^\infty \frac1n \delta_n)$. 	The unitary $\mathcal U(\xi,\theta)$ acts on the exponential states $\mathscr E_f \in \mathfrak H$ as 
	\begin{equation} \label{eq:lifted-u}
		     \mathcal U(\xi,\theta) \mathscr E_f = \exp \Bigl( \mathrm i \int \theta \dd \alpha - \frac 12 ||\varphi_{\xi,\theta}||^2 + \la \varphi_{\xi,\theta}, f\ra \Bigr) \mathscr E_{Q_{\xi,\theta} (f- \varphi_{\xi,\theta})}.
	\end{equation}
\end{prop}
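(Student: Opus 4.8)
The plan is to recognize the unitary $\mathcal U(\xi,\theta)$ as a product of a second quantization and a Weyl (displacement) operator, which is exactly the shape of Araki's template, and to read off the one-particle unitary $Q_{\xi,\theta}$, the cocycle $\varphi_{\xi,\theta}$ and the phase from this factorization. The starting point is the observation that the lifted generators of Section~\ref{app:araki} split into a second-quantized $su(1,1)$ part and a linear part. Writing $g_\varphi(x,n) := \varphi(x)\,\delta_{n,1}$ for the ``solo diner'' vector, letting $\mathrm k^\#$ act on the $n$-variable, and denoting by $a^\dagger, a$ the Fock creation/annihilation operators in $\mathfrak H$, one checks directly from the displayed formulas that
\begin{align*}
	K^0(\varphi) &= \tfrac12\alpha(\varphi) + \mathrm d\Gamma(\varphi\,\mathrm k^0),\\
	K^+(\varphi) &= \mathrm d\Gamma(\varphi\,\mathrm k^+) + a^\dagger(g_\varphi),\\
	K^-(\varphi) &= \mathrm d\Gamma(\overline\varphi\,\mathrm k^-) + a(g_\varphi).
\end{align*}
In particular $\exp(2\mathrm i K^0(\theta)) = \exp(\mathrm i\int\theta\,\dd\alpha)\,\Gamma(\e^{2\mathrm i\theta\mathrm k^0})$, which already accounts for the phase $\mathrm i\int\theta\,\dd\alpha$ and, through $\Gamma(U)\mathscr E_f = \mathscr E_{Uf}$, for the $\e^{2\mathrm i\theta}$–twist inside $Q_{\xi,\theta}$.

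Next I would treat the first exponential. Its generator $K^+(\xi)-K^-(\xi) = \mathrm d\Gamma(A_\xi) + \bigl(a^\dagger(g_\xi) - a(g_\xi)\bigr)$, with the skew-adjoint one-particle operator $A_\xi := \xi\,\mathrm k^+ - \overline\xi\,\mathrm k^-$, is a second-quantized quadratic term plus a skew-adjoint linear term; the one-particle part $\e^{A_\xi}$ is the single-table analogue of the matrix $A(\xi,0)$ from \eqref{eq:axitheta}. Exponentials of quadratic-plus-linear type factor, up to a scalar, as $W(\psi_\xi)\,\Gamma(\e^{A_\xi})$. Rigorously this is obtained by differentiating $t\mapsto \Gamma(\e^{-tA_\xi})\exp\bigl(t(K^+(\xi)-K^-(\xi))\bigr)$ and recognizing that it solves a Weyl-operator differential equation whose solution is $W(\psi_\xi)$ with $\psi_\xi = -\int_0^1 \e^{tA_\xi}g_\xi\,\dd t$ (a variation-of-constants computation of the same flavour as the proof of Theorem~\ref{thm:bch}). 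Combining the two exponentials and using the intertwining relation $\Gamma(U)W(\chi) = W(U\chi)\Gamma(U)$ to move the displacement outside, one obtains $\mathcal U(\xi,\theta) = \exp(\mathrm i\int\theta\,\dd\alpha)\,\Gamma(Q_{\xi,\theta})\,W(-\varphi_{\xi,\theta})$ with $Q_{\xi,\theta} = \e^{A_\xi}\e^{2\mathrm i\theta\mathrm k^0}$ and $\varphi_{\xi,\theta} := -Q_{\xi,\theta}^{-1}\psi_\xi$. The standard action of a Weyl operator on exponential states, $W(-\varphi)\mathscr E_f = \exp(-\tfrac12\norm{\varphi}^2 + \la\varphi,f\ra)\mathscr E_{f-\varphi}$, together with $\Gamma(Q_{\xi,\theta})\mathscr E_{f-\varphi_{\xi,\theta}} = \mathscr E_{Q_{\xi,\theta}(f-\varphi_{\xi,\theta})}$, then yields \eqref{eq:lifted-u} for every $f$ in the one-particle Hilbert space at once.

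It remains to identify $\varphi_{\xi,\theta}$ explicitly and to match the scalars. Evaluating the Duhamel integral for $\psi_\xi$ in the single-table representation — conveniently via the factorization $\e^{A_\xi} = \exp(\tau\mathrm k^+)(\cosh|\xi|)^{-2\mathrm k^0}\exp(-\overline\tau\mathrm k^-)$ with $\tau = \tfrac{\xi}{|\xi|}\tanh|\xi|$, the operator analogue of \eqref{eq:bch-univariate} — produces a geometric sequence, and after applying $-Q_{\xi,\theta}^{-1}$ one recovers $\varphi_{\xi,\theta}(x,n) = \bigl(-\e^{2\mathrm i\theta(x)}\tfrac{\xi(x)}{|\xi(x)|}\tanh|\xi(x)|\bigr)^n$ as stated. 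The norm is then a geometric series, $\tfrac12\norm{\varphi_{\xi,\theta}}^2 = \tfrac12\int\sum_{n\geq1}\tfrac1n(\tanh|\xi|)^{2n}\,\dd\alpha = \int\log\cosh|\xi|\,\dd\alpha$, and the pairing $\la\varphi_{\xi,\theta},f\ra$ is a further $-\log(1-\cdots)$ series; together with the phase they reproduce $\mathcal C_{\xi,\theta}(f)$ of \eqref{eq:cxitheta}. As an internal consistency check one may restrict to the exchangeable subspace $\mathfrak U\mathfrak F$, where $f(x,n)=z(x)^n$ and the already-established action \eqref{eq:uxitheta-lifted} must coincide with \eqref{eq:lifted-u}.

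The hard part will be the rigorous factorization of the quadratic-plus-linear exponential $\exp(K^+(\xi)-K^-(\xi))$ into $W(\psi_\xi)\Gamma(\e^{A_\xi})$: the operators are unbounded, so one must justify the differentiation, the convergence of the Duhamel integral defining $\psi_\xi$, and the domain issues exactly as in the proof of Theorem~\ref{thm:bch}, and one must track any residual phase produced by this factorization. The subsequent bookkeeping of phases and conjugations needed to reconcile $\la\varphi_{\xi,\theta},f\ra$ with the precise form of $\mathcal C_{\xi,\theta}(f)$ is delicate but routine once the factorization and the closed form of $\varphi_{\xi,\theta}$ are in hand.
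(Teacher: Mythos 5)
Your route is genuinely different from the paper's. The paper's (admittedly sketchy) proof never factorizes $\mathcal U(\xi,\theta)$ as a second quantization times a Weyl operator: it handles $\xi\equiv 0$ by inspection, handles $\theta\equiv 0$ by replacing $\xi$ with $t\xi$ on both sides of \eqref{eq:lifted-u}, differentiating in $t$ and checking that both sides solve the same ODE (the method of Proposition~5.1 of \cite{floreani-jansen-wagner2023algebraic}), and then concatenates $\mathcal U(\xi,\theta)=\mathcal U(\xi,0)\mathcal U(0,\theta)$; the explicit form of $\varphi_{\xi,\theta}$ and the scalars are only verified a posteriori on the exchangeable vectors $f(x,n)=z(x)^n$. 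Your decomposition $K^0(\varphi)=\tfrac12\alpha(\varphi)+\mathrm d\Gamma(\varphi\,\mathrm k^0)$, $K^+(\varphi)=\mathrm d\Gamma(\varphi\,\mathrm k^+)+a^\dagger(g_\varphi)$, $K^-(\varphi)=\mathrm d\Gamma(\overline\varphi\,\mathrm k^-)+a(g_\varphi)$ is correct (I checked it against the displayed formulas, including the normalization conventions forced by the $1/m!$ in the norm of $\mathfrak H$), and deriving the Araki form from the general factorization of a quadratic-plus-linear exponential is more structural: it explains \emph{why} the representation has Araki's shape, produces $Q_{\xi,\theta}$ and $\varphi_{\xi,\theta}$ rather than verifying them, and gives the identity for all $f$ in the one-particle space at once instead of only on a total set.

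The one substantive point you flag but do not close is not optional: the factorization $\exp(\mathrm d\Gamma(A_\xi)+a^\dagger(g_\xi)-a(g_\xi))=c\,W(\psi_\xi)\,\Gamma(\e^{A_\xi})$ generically carries a central phase $c$ coming from the Heisenberg commutator $[a^\dagger(g)-a(g),\,a^\dagger(h)-a(h)]=2\mathrm i\,\Im\la h,g\ra$, and the proposition as stated requires $c=1$. Concretely, writing $V_t=\Gamma(\e^{-tA_\xi})\exp\bigl(t(K^+(\xi)-K^-(\xi))\bigr)$ one finds $V_t=\e^{\mathrm i\beta_t}W(\chi_t)$ with $\chi_t=\int_0^t \e^{-sA_\xi}g_\xi\,\dd s$ and $\beta_t'=-\Im\la \e^{-tA_\xi}g_\xi,\chi_t\ra$, so you must show this imaginary part vanishes. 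It does: conjugating fiberwise by $\e^{\mathrm i\omega\mathrm k^0}$ with $\xi=|\xi|\e^{\mathrm i\omega}$ turns $A_\xi$ into $|\xi|(\mathrm k^+-\mathrm k^-)$ and $g_\xi$ into $|\xi|\delta_{n,1}$, which are real in the standard basis of $\ell^2(\N,\sum_n\tfrac1n\delta_n)$, so all the relevant inner products are real and $\beta\equiv 0$. With that supplied, and after fixing the sign in your Duhamel integral (the correct cocycle is $-\varphi_{\xi,\theta}=\int_0^1\e^{-sA_\xi}g_\xi\,\dd s$ twisted by $\e^{2\mathrm i\theta\mathrm k^0}$, not its negative), your argument goes through and in fact proves somewhat more than the paper's sketch.
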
 

\begin{proof} [Proof sketch] 
	For $\xi \equiv 0$, the vector $\varphi_{\xi,\theta}$ vanishes and $\exp( 2\mathrm i \theta \mathrm k^0)$ is multiplication with $\exp(2 \mathrm i \theta(x))$. Thus, the right side of Eq.~\eqref{eq:lifted-u}  is $\exp(\mathrm i \alpha(\theta)) \mathscr E_{\exp( 2\mathrm i\theta) f}$, which is indeed the same as $\exp(2\mathrm i K^0(\theta))\mathscr E_f$. 
	For $\theta \equiv 0$, one may replace $\xi$ by $t\xi$ on both sides in~\eqref{eq:lifted-u},  differentiate with respect to $t$, and check that the differentiated equality holds true. The procedure is similar to the proof of Proposition 5.1 in our article \cite{floreani-jansen-wagner2023algebraic}, we omit the details. The full statement is deduced by concatenating $\mathcal U(\xi,\theta) = \mathcal U(\xi,0) \mathcal U(0,\theta)$. 
\end{proof} 

We complement the proof sketch by a consistency check with Eqs.~\eqref{eq:uxitheta-lifted}--\eqref{eq:zxitheta}. For $f(x,n ) = z(x)^n$ with $\sup|z|<1$, we compute 
\[
	\la \varphi_{\xi,\theta}, f\ra = \sum_{n=1}^\infty \frac1n \int \overline{\varphi_{\xi,\theta}(x,n)} f(x,n) \alpha(\dd x) 
	= - \int \log \Bigl( 1+ z \frac{\overline{\xi}}{|\xi|}\tanh|\xi|\Bigr) \dd\alpha. 
\] 
With the equality $1- (\tanh s)^2 = 1/ (\cosh s)^2$, we get 
\[
	- \frac 12 ||\varphi_{\xi,\theta}||^2 =  \frac 12 \int \log\bigl( 1- (\tanh|\xi|)^2\bigr) \dd \alpha =- \int \log (\cosh|\xi|) \dd \alpha.
\]
It follows that the exponential on the right side in~\eqref{eq:lifted-u} is equal to the factor $\mathcal C_{\xi,\theta}$ from~\eqref{eq:uxitheta-lifted} and~\eqref{eq:cxitheta}. It remains to check that $\mathscr E_{Q_{\xi,\theta}(f -\varphi_{\xi,\theta})}$ corresponds to $\prod_{j} z_{\xi,\theta}(x_j)^{n_j}$ with $z_{\xi,\theta}(x)$. 
For simplicity we only consider $\theta\equiv 0$ and real-valued $\xi$. Checking the desired equality boils down to a computation in $\ell^2(\N,\sum_{n=1}^\infty \frac1n \delta_n)$: for $t\in \R$, $z\in \C$ with $ |z|<1$, and 
\[
	z_t = \frac{z+ \tanh t}{1+ z\tanh t},\quad f_t(n) = z_t^n
\] 
one has 
\begin{align*}
	\frac{\dd}{\dd t} z_t^n = n z_t^{n-1} (1-z_t^2) = (\mathrm k^+ - \mathrm k^- ) f_t(n) + \delta_{n,1}.
\end{align*}
The special case $z = - \tanh s$ with fixed $s\in \R$ shows that $g_t(n) = (\tanh (t-s))^n$ satisfies the same differential equality, therefore $\partial_t (f_t - g_t) = (\mathrm k^+- \mathrm k^-) (f_t - g_t)$ and $f_t - g_t = \exp(t(\mathrm k^+- \mathrm k^-))(f_0-g_0)$. In the previous equality $g_0$ depends on $s$ and the equality holds true for every $s$, in particular, $s=-t$. For this choice $g_0(n) = (-\tanh t)^n$ and $g_t(n) =0$. Thus, we get
\[
	\e^{t   (\mathrm k^+ - \mathrm k^-)}(f_0 - g_0)  (n) = f_t(n) = z_t^n
\] 
which is essentially the desired equality.

\section{Generalized Lebesgue measure and \texorpdfstring{$SL(2,\R)$}{SL(2,R)}} \label{app:lebesgue}

Here we explain how our representations with the Gamma random measure relate to representations of the special linear group with the multiplicative measure. Let us first briefly describe the construction, following Tsilevich, Vershik and Yor \cite{tsilevich-vershik-yor2001}. We assume that the measure $\alpha$ on $\mathbb X$ is finite so that the Gamma random measure with law $\rho_\alpha$ has almost surely finite total mass, $\mu(\mathbb X)<\infty$. The measure $\mathscr L_\alpha$ is by definition the $\sigma$-finite measure on $\mathbf M$ that is absolutely continuous with respect to $\rho_\alpha$, with Radon-Nikodym derivative 
\[
	\frac{\dd \mathscr L_\alpha}{\dd \mathscr \rho_\alpha}(\mu) = \exp(\mu(\mathbb X)).
\]
The definition is motivated by a simple observation in the univariate case: The Lebesgue measure on $\R_+$ is absolutely continuous with respect to the Gamma distribution $\e^{-x}\dd x$ with parameter $1$, with Radon-Nikodym derivative $\exp(x)$.

The $SL(2,\R)$ current group over $\mathbb X$ consists of the bounded measurable maps taking values in $SL(2,\R)$, the real $2\times 2$ matrices with determinant $1$. The lower triangular subgroup consists of the elements  $g:\mathbb X\to SL(2,\R)$ of the form 
\[
	g(x) = \begin{pmatrix} 
				a(x)^{-1} & 0 \\
				b(x) & a(x) 
     \end{pmatrix}
\] 
with bounded measurable maps $a,b:\mathbb X\to \R$. The map $a$ is bounded away from zero. 
For $\mu \in \mathbf M$, the dilated measure $M_{a^2} \mu$ is given by $M_{a^2}\mu(B) = \int_B a^2(x) \mu(\dd x)$. For $F\in L^2(\mathbf M,\mathscr L_\alpha)$ and $g$ as above, define 
\[
	\mathcal U_g F (\mu) = \exp\Bigl( \int |\log a| \dd \alpha + \mathrm i \int a b \dd \mu \Bigr) F(M_{a^2} \mu).
\]
The map $g\mapsto \mathcal U_g$ is a unitary representation of the triangular subgroup of the $SL(2,\R)$ current group (\cite[Section 5]{tsilevich-vershik-yor2001} and references therein). The action of the full group is more complicated; in the univariate case it requires, in addition to the multiplication and dilation operators, an integral operator
\cite{tsilevich-vershik-yor2001}.

To establish the connection with our representation from Section~\ref{sec:rep-gamma}, we first notice that the spaces are unitarily equivalent. The map $F\mapsto \tilde F$ with $\tilde F(\mu) = \exp(\mu(\mathbb X)/2) F(\mu)$ is a unitary isomorphism from $L^2(\mathbf M,\rho_\alpha)$ onto $L^2(\mathbf M, \mathscr L_\alpha)$. In $L^2(\mathbf M,\rho_\alpha)$, the unitary $\mathcal U_g$ becomes
\[
	U_g F(\mu) = \exp\Bigl( \int |\log a| \dd \alpha + \mathrm i \int a b\, \dd \mu + \frac12 \int (1-a^2) \dd \mu \Bigr) F(M_{a^2} \mu)
\]
For $a(x)\equiv 1$, this is simply multiplication with $\exp( \mathrm i \mu(b))$ hence, in view of~\eqref{eq:gamma-fieldop}, this is the operator $\exp( \mathrm i (K^+(b)+ K^-(b) + 2K^0(b)))$. For $b(x) \equiv 0$ and $a(x) = \exp(t \varphi(x))$ with real-valued bounded $\varphi$, 
$U_g$ maps $F$ to 
\[
	F_t(\mu) = \exp\Bigl( t \int \varphi \dd \alpha  + \frac12 \int (1-\e^{2t\varphi}) \dd \mu \Bigr) F(M_{\exp(2t\varphi)} \mu).
\] 
An explicit computation for polynomial $F$ gives 
\begin{align*}
	\partial_t F_t(\mu) & = 2 \int \varphi(x)\frac{\delta F_t}{\delta \mu(x)} (\mu) \mu(\dd x) + \bigl( \alpha(\varphi) - \mu(\varphi) \bigr) F_t(\mu) \\
	& = \bigl( K^-(\varphi) - K^+(\varphi)\bigr) F_t (\mu)
\end{align*}
and hence, the unitary $U_g$ for $g$ the diagonal matrix with diagonal entries $\exp(-t \varphi(x))$, $\exp( t \varphi(x))$, corresponds to $\exp(t (K^-(\varphi)- K^+(\varphi))$. 

To conclude,  let $k^\pm,k^0$ be the $2\times 2$ matrices from Section~\ref{sec:22matrices}. The correspondence 
\begin{align*}
	\mathrm i (k^+ + k^- + 2  k^0) &\mapsto \begin{pmatrix}  0 & 0 \\ 1 & 0 \end{pmatrix}, && 	k^- - k^+ \mapsto \begin{pmatrix}  -1 & 0 \\ 0 & 1 \end{pmatrix},\\
	\mathrm i (k^+ + k^- - 2  k^0) &\mapsto \begin{pmatrix}  0 & 1 \\ 0 & 0 \end{pmatrix}
\end{align*}
extends to an isomorphism from the real Lie algebra $su(1,1)$ 
onto $sl(2,\R)$. 

Thus, the representations with the multiplicative measure $\mathscr L_\alpha$ agree with our representations with the Gamma random measure. We pass from one to the other by (1) mapping $L^2(\mathbf M,\mathscr L_\alpha)$ to $L^2(\mathbf M, \rho_\alpha)$, and (2) exploiting the isomorphism between the real Lie algebra $su(1,1)$ and $sl(2,\R)$.

\subsubsection*{Acknowledgements}
S.F.\ acknowledges financial support from the Deutsche For\-schungsgemeinschaft (DFG, German Research Foundation) under Germany’s Excellence Strategy– EXC-2047/1 – 390685813. S.J.\ and S.W.\ were supported under Germany's excellence strategy EXC-2111-390814868. S.F.\ and S.W.\ thank the  Hausdorff Institute for Mathematics (Bonn) for its hospitality during the Junior Trimester Program \textit{Stochastic modelling in life sciences} funded by the Deutsche Forschungsgemeinschaft (DFG, German Research Foundation) under Germany’s Excellence Strategy - EXC-2047/1 - 390685813. The authors thank F. Redig for his insights at the beginning of the project. S.W. thanks D. Span{\`o} for helpful discussions.



\providecommand{\bysame}{\leavevmode\hbox to3em{\hrulefill}\thinspace}
\providecommand{\MR}{\relax\ifhmode\unskip\space\fi MR }
\providecommand{\MRhref}[2]{%
  \href{http://www.ams.org/mathscinet-getitem?mr=#1}{#2}
}
\providecommand{\href}[2]{#2}

\end{document}